\documentclass[11pt,a4paper]{article}
\usepackage{amsmath, amssymb, enumitem, dsfont}
\usepackage[a4paper, left=2.5cm, right=2.5cm, top=2.5cm, bottom=2.5cm]{geometry}

\usepackage{caption} 
\usepackage{xcolor} 
\usepackage{amsthm,thmtools} 
\usepackage{graphicx}

\usepackage[colorlinks=true, linkcolor=black, citecolor=black, urlcolor=black]{hyperref}
\usepackage{cleveref}
\crefformat{theorem}{{Theorem~#2#1#3}}
\crefformat{corollary}{{Corollary~#2#1#3}}
\crefformat{proposition}{{Proposition~#2#1#3}}
\crefformat{lemma}{{Lemma~#2#1#3}}
\crefformat{definition}{{Definition~#2#1#3}}
\crefformat{remark}{{Remark~#2#1#3}}
\crefformat{assumption}{{Assumption~#2#1#3}}
\crefformat{section}{{Section~#2#1#3}}
\crefformat{subsection}{{Subsection~#2#1#3}}

\definecolor{bluepoli}{cmyk}{0.4,0.1,0,0.4}

\declaretheoremstyle[
  headfont=\color{bluepoli}\normalfont\bfseries,
  bodyfont=\color{black}\normalfont\itshape,
]{colored}

\newtheorem{theorem}{Theorem}[section]

\newtheorem{corollary}[theorem]{Corollary}
\newtheorem{proposition}[theorem]{Proposition}
\newtheorem{lemma}[theorem]{Lemma}
\newtheorem{definition}[theorem]{Definition}
\newtheorem{remark}[theorem]{Remark}
\newtheorem{assumption}[theorem]{Assumption}

\numberwithin{equation}{section}

\title{\textbf{On a semilinear heat equation on infinite graphs I: blow-up for large initial data}}
\author{
Fabio Punzo\thanks{
Dipartimento di Matematica, Politecnico di Milano,
Piazza Leonardo da Vinci 32, 20133 Milano, Italia.
Email: \texttt{fabio.punzo@polimi.it}.
Membro INdAM e parzialmente supportato dal progetto GNAMPA 2026.
}
\and
Federico Zucchero\thanks{
Dipartimento di Matematica, Politecnico di Milano,
Piazza Leonardo da Vinci 32, 20133 Milano, Italia.
Email: \texttt{federico.zucchero@mail.polimi.it}.
}
}

\date{}

\begin{document}

\maketitle


\abstract{We investigate finite-time blow-up of solutions to the
Cauchy problem for a semilinear heat equation posed on
infinite graphs. Assuming that the initial datum is
sufficiently large, we establish a general blow-up
criterion valid on arbitrary infinite graphs.

We then apply this result to specific classes of graphs,
including trees and the integer lattice. The approach
developed in the paper can be regarded as a discrete
counterpart of Kaplan's method, suitably adapted to the
graph setting. In \cite{PZ2}, which is the second part of this
work, we also complement the blow-up analysis by addressing
arbitrary initial data and proving global existence for
sufficiently small data.
}

\medskip
{\it Mathematics Subject Classification}: 35A01, 35A02, 35B44, 35K05, 35K58, 35R02

{\it Keywords}: Semilinear parabolic equations, infinite graphs, blow-up, integer lattice, trees.


\section{Introduction}\label{section_introduction}
We investigate finite-time blow-up of solutions to the Cauchy problem
\begin{equation}\label{problem_HR_G}
\begin{cases}
u_t - \Delta u = f(u) & \text{in } X \times (0,T), \\
u = u_0 & \text{in } X \times \{0\},
\end{cases}
\tag{1.1}
\end{equation}
where $(X,\omega,\mu)$ is an infinite weighted graph with edge
weight $\omega$ and node measure $\mu$, $T>0$,
$\Delta$ denotes the weighted Laplacian on the graph. Furthermore, $f:[0, \infty)\to [0, \infty)$ is a locally Lipschitz and convex function fulfilling
$f(0)=0$, $f(u)>0$ for all $u>0$,
\[
\int_{1}^{\infty}\frac{du}{f(u)}<\infty
\qquad\text{and}\qquad
\lim_{u\to\infty}\frac{f(u)}{u}=+\infty,
\]
while  $u_0 : X \to \mathbb{R}$ is a given nonnegative
initial datum.

Problem  \eqref{problem_HR_G} is set in the framework of infinite weighted
graphs, which provide a natural discrete counterpart of
Euclidean spaces and Riemannian manifolds. The weighted
Laplacian reflects both the combinatorial structure of
the graph and the underlying measure, allowing one to
describe diffusion processes on discrete spaces in close
analogy with the continuous theory.

More recently, increasing attention has been devoted to
elliptic and parabolic equations posed on graphs.
Foundational references in this area include the
monographs \cite{Grig1, KLW, Mu}, along with numerous
contributions concerning elliptic and parabolic equations
(see, e.g., \cite{BMP, BP, FR, GMP2, GSXX, HK, Huang, KLW, LSZ, LW1, LW2, MePu, MPS1, MPS2, PSa, W, Woj, Wu}).

These works emphasize both the analogies with the
continuous setting and the distinctive features arising
from the discrete structure, such as volume growth
conditions, heat kernel estimates, and functional
inequalities.

The study of nonlinear diffusion equations on graphs is
strongly motivated by applications. Graphs naturally
model diffusion and reaction processes on networks,
including transportation systems, electrical circuits,
communication networks, and social or biological
interaction networks. In such contexts, the function $u$
may represent the density of a substance, the intensity
of a signal, or the concentration of a population
evolving on a discrete structure. Finite-time blow-up
corresponds to concentration or instability phenomena
and may describe threshold effects or breakdown
mechanisms in the underlying network dynamics.
Understanding whether solutions exist globally or blow up
in finite time is therefore of both theoretical and
applied interest.

In the Euclidean framework, the question of whether
problem  \eqref{problem_HR_G}  admits global solutions, or instead
exhibits nonexistence phenomena, has been thoroughly
investigated, particularly in the case $f(u) = u^p$
(see, for instance,  \cite{BB, BLev, DL, Fujita, H,  KST, Lev, MitPo, Weis}  and the
references therein). A cornerstone result is the
identification of the Fujita critical exponent, which
separates global existence from finite-time blow-up
for small initial data in $\mathbb{R}^N$.

When the ambient space is replaced by a Riemannian
manifold, the qualitative behavior of solutions may
change substantially, as the underlying geometry plays
a decisive role. This shift in perspective has been
explored in several works (see, e.g.,  \cite{BPT, GMP1, MMP, Pu1, Pu2, Z}).

For problem  \eqref{problem_HR_G} posed on an infinite graph, existence
of global solutions and finite-time blow-up of solutions
for any nontrivial initial datum have been investigated
in \cite{vCr, GMP2, GSXX, LSZ, LW1, LW2, MPS2, PSa, W}. These contributions
show that the critical behavior depends on structural
properties of the graph, such as volume growth and heat
kernel estimates, highlighting both similarities with
and differences from the Euclidean and manifold cases. Some basic results concerning blow-up of solutions for large initial data have been established in \cite{LW1}.

\smallskip

The aim of this paper is to establish finite-time blow-up
of solutions to problem \eqref{problem_HR_G} under the
assumption that the initial datum $u_0$ is sufficiently
large. More precisely, we provide a general criterion
ensuring finite-time blow-up of solutions to
\eqref{problem_HR_G}, which may be viewed as a version
of Kaplan's method adapted to infinite graphs.

In this respect, we extend to the graph setting the
results obtained in \cite{BLev, Pu2}, where the cases of
$\mathbb R^N$ and Riemannian manifolds were considered,
respectively. The discrete nature of the ambient space
requires substantial modifications of the arguments,
since several analytical tools available in the
continuous framework must be suitably reformulated
for infinite graphs.

After establishing the general blow-up criterion, we
apply it to several specific classes of graphs. We
first derive a result valid on a general infinite graph.
We then refine this result in the case of a generic
tree, and further specialize it to homogeneous trees.
Finally, we consider the integer lattice $\mathbb Z^N.$

In \cite{PZ2}, which constitutes the second part of this
work, we further develop these results by considering
arbitrary initial data on $\mathbb Z^N$. Moreover, we
establish global-in-time existence for sufficiently
small initial data, thereby complementing the blow-up
analysis and providing a more comprehensive description
of the qualitative behavior of solutions.

\smallskip

The paper is organized as follows. In Section
\ref{section_mathematical_framework} we introduce the
notation concerning infinite graphs and recall some
preliminary results. In Section \ref{section_hypotheses}
we present the main assumptions adopted throughout the
paper. The main results are stated in Section
\ref{section_main_results}. Sections
\ref{subsection_proofs_1} and \ref{subsection_proofs_2}
are devoted to the proofs in the general graph setting.
Finally, in Sections \ref{subsection_proofs_3} and
\ref{subsection_proofs_4} we establish the results for
trees and for the integer lattice, respectively.


\section{Mathematical framework}\label{section_mathematical_framework}


\subsection{The graph setting}\label{subsection_graph_setting}

\medskip
\begin{definition}\label{definition_weighted_graph}
Let \( X \) be an infinitely countable set and consider a function \( \mu : X \to (0,+\infty) \). Moreover, let \( \omega : X \times X \to [0,+\infty) \) be a map satisfying the following properties:
\begin{enumerate}
\renewcommand{\labelenumi}{\alph{enumi})}
    \item \( \omega(x,x) = 0 \) for all \( x \in X \);
    \item \( \omega \) is symmetric, that is, \( \omega(x,y) = \omega(y,x) \) for all \( x,y \in X \);
    \item \(\displaystyle \sum_{y \in X} \omega(x,y) < +\infty\) for all \( x \in X \).
\end{enumerate}
Then the triplet \( (X, \omega, \mu) \) is called weighted graph, and the functions \( \mu \) and \( \omega \) are referred to as vertex (or node) measure and edge weight, respectively. Two vertices \( x,y \in X \) are said to be adjacent (or equivalently connected, joint or neighbors) whenever \( \omega(x, y) > 0 \); in this case we write \( x \sim y \), and the pair \( (x,y) \) defines an edge of the graph with endpoints \( x,y \).
\end{definition}

\medskip
\noindent Observe that, since the weight function \( \omega \) determines the edges of a weighted graph, condition a) in \cref{definition_weighted_graph} implies the absence of \emph{self-loops}, that is, there are no edges of type \( (x,x) \). Moreover, condition b) in \cref{definition_weighted_graph} guarantees that the graph is \emph{undirected}, meaning that its edges do not have an orientation.

\medskip
\noindent The definition of the node measure \( \mu \) can be extended to the power set \( \mathcal{P}(X) \) as follows:
\[
\begin{split}
\mu(A) &:= \sum_{x \in A} \mu(x) \hspace{1.6em} \text{for every } A \in \mathcal{P}(X) \setminus \left\{ \emptyset \right\}, \\
\mu(\emptyset) &:= 0.
\end{split}
\]
\noindent Such extension defines a measure; in particular, \( (X, \mathcal{P}(X), \mu) \) is a measure space.

\medskip
\begin{definition}\label{definition_locally_finite}
The weighted graph \( (X, \omega, \mu) \) is said to be
\begin{enumerate}
\renewcommand{\labelenumi}{\alph{enumi})}
    \item locally finite if each vertex has only finitely many neighbors, namely
    \[
    \left| \{ y \in X : y \sim x \} \right| < +\infty, \qquad \quad \text{for all } x \in X;
    \]
    \item connected if for all \( x, y \in X \) there exists a path linking \( x \) to \( y \), namely a list of vertices \( \gamma = \{z_1, z_2, \ldots, z_n\} \subseteq X \) such that:
\vspace{-0.35em}
\[
z_1 = x, \hspace{1.4em} z_n = y \hspace{1.4em} \text{and} \hspace{1.4em} z_i \sim z_{i+1} \hspace{1.2em} \text{for} \hspace{0.5em} i = 1, \ldots, n-1.
\]
\end{enumerate}
\end{definition}

\begin{definition}\label{definition_weighted_degree}
Let \( (X, \omega, \mu) \) be a weighted graph. Then we define, for any vertex \( x \in X \), the degree of \( x \) as
\[
\deg(x) := \sum_{y \in X} \omega(x,y)
\]
\noindent and the weighted degree of \( x \) as
\[
\operatorname{Deg}(x) := \frac{\deg(x)}{\mu(x)} = \frac{1}{\mu(x)} \sum_{y \in X} \omega(x,y).
\]
\end{definition}

\medskip
\noindent Notice that condition c) in \cref{definition_weighted_graph} ensures that every vertex in a weighted graph has finite degree.

\medskip
\begin{definition}\label{definition_pseudo_metric}
Given a countable set \( X \), a pseudo metric on \( X \) is a function \( d : X \times X \to [0, +\infty) \) such that
\begin{enumerate}
\renewcommand{\labelenumi}{\alph{enumi})}
  \item \( d(x,x) = 0 \) for all \( x \in X \);
  \item \( d \) is symmetric, namely \( d(x,y) = d(y,x) \) for all \( x, y \in X \);
  \item \( d \) satisfies the triangular inequality, that is:
  \[
  d(x,y) \leq d(x,z) + d(z,y) \hspace{1.6em} \text{for all } x, y, z \in X.
  \]
\end{enumerate}
\end{definition}

\begin{definition}\label{definition_jump_size}
Let \( (X, \omega, \mu) \) be a weighted graph, and consider a pseudo  metric \( d \) on \( X \). Then the jump size of \( d \) is defined as
\[
\begin{aligned}
s :=& \sup\{ d(x,y) : x, y \in X, \text{ } \omega(x,y) > 0 \} \\
=& \sup\{ d(x,y) : x, y \in X, \text{ } x \sim y \}.
\end{aligned}
\]

For all $ r > 0 , x_0\in X$ we define the ball of radius \( r \) centred at \( x_0 \) as
\[
B_r(x_0) := \{ x \in X : d(x,x_0) < r \};
\]
\noindent furthermore, we set
\[
\partial B_r(x_0) := \{ x \in X : d(x,x_0) = r \}.
\]
\end{definition}

\smallskip
\begin{definition}\label{definition_intrinsic}
Let \( (X, \omega, \mu) \) be a weighted graph, and consider a pseudo  metric \( d \) on \( X \). Given two constants \( q \geq 1, C_0 > 0 \), we say that \( d \) is \( q \)--intrinsic with bound \( C_0 \) if the following holds:
\[
\frac{1}{\mu(x)} \sum_{y \in X} \omega(x,y) \left[ d(x,y) \right]^q \leq C_0 \hspace{1.6em} \text{for all } x \in X.
\]
\noindent In particular, we say that \( d \) is intrinsic if the above estimate is satisfied with \( q = 2, C_0 = 1 \), namely:
\[
\frac{1}{\mu(x)} \sum_{y \in X} \omega(x,y) \left[ d(x,y) \right]^2 \leq 1 \hspace{1.6em} \text{for all } x \in X.
\]
\end{definition}


\subsection{Difference and Laplace operators}\label{subsection_difference_Laplace_operators}

\noindent We first introduce the space of all real-valued functions on the vertex set:
\[
C(X) := \{ f : X \to \mathbb{R} \}.
\]

\begin{definition}\label{definition_difference_operators}
Let \( (X, \omega, \mu) \) be a weighted graph. Then:
\begin{enumerate}
\renewcommand{\labelenumi}{\alph{enumi})}
    \item given two vertices \( x,y \in X \), we define the corresponding difference operator as \( \nabla_{xy} \), acting in the following way for any \( f \in C(X) \):
    \[
    \nabla_{xy} f := f(y) - f(x);
    \]

    \item the (weighted) Laplacian acts as \( \Delta: \mathcal{D}_{\Delta}(X) \to C(X) \), where
    \[
    \mathcal{D}_{\Delta}(X) := \left\{ f \in C(X) : \sum_{y \in X} \omega(x,y) |f(y)| < +\infty, \text{ } \forall x \in X \right\},
    \]
    \noindent and is defined, for all \( x \in X \), as:
    \[
    \begin{aligned}
    \Delta f(x) :=& \frac{1}{\mu(x)} \sum_{y \in X} \omega(x,y) \left[ f(y) - f(x) \right] \\
    =& \frac{1}{\mu(x)} \sum_{y \in X} \omega(x,y) \left[ \nabla_{xy} f \right].
    \end{aligned}
    \]
\end{enumerate}
\end{definition}

\medskip
\noindent Notice that, since by assumption \( \mu > 0 \) on every vertex, then the Laplacian of a function belonging to \( \mathcal{D}_{\Delta}(X) \) is defined over the whole vertex set \( X \). 

\begin{remark}\label{remark_locally_finite_laplacian}
It is straightforward to verify that, for any locally finite weighted graph \( (X, \omega, \mu) \), the following identity holds:
\[
C(X) = \mathcal{D}_{\Delta}(X),
\]
\noindent so that the Laplacian is well defined for any function \( f \in C(X) \).
\end{remark}

\medskip
\noindent Let us define, for any \( f \in C(X) \), the \emph{support} of \( f \) as the set of vertices on which \( f \) does not vanish, namely:
\[
\operatorname{supp}(f) := \left\{ x \in X : f(x) \neq 0 \right\}.
\]
\noindent We also denote by \( C_c(X) \) the set of functions with \emph{finite support}, that is:
\[
C_c(X) := \left\{ f \in C(X) : \operatorname{supp}(f) \text{ is a finite set} \right\}.
\]

\medskip
\noindent We now recall the following identities, which will be expedient in the sequel.

\smallskip
\begin{proposition}
Let \( (X, \omega, \mu) \) be a locally finite weighted graph. Then
\begin{enumerate}
\renewcommand{\labelenumi}{\alph{enumi})}
    \item the following product rule holds:
    \[
    \nabla_{xy}(fg) = f(x) \left[ \nabla_{xy}g \right] + \left[ \nabla_{xy}f \right] g(y) \hspace{1.6em} \text{for all } x, y \in X,
    \]
    \noindent for every couple of functions \( f,g \in C(X) \);

    \item the following formula for the Laplacian of the product holds:
    \begin{equation}\label{Laplacian_product_formula}
    \Delta(fg)(x) = f(x) \Delta g(x) + g(x) \Delta f(x) + \frac{1}{\mu(x)} \sum_{y \in X} \omega(x,y) \bigl[ \nabla_{xy}f \bigr] \left[ \nabla_{xy}g \right],
    \end{equation}
    \noindent for all \( x \in X \) and for every couple of functions \( f,g \in C(X) \);

    \item the following integration by parts formula holds:
    \begin{equation}\label{integration_by_parts}
    \begin{aligned}
    \sum_{x \in X} [\Delta f(x)] g(x) \mu(x) &= -\frac{1}{2} \sum_{x,y \in X} \omega(x,y) \left[ \nabla_{xy} f \right] \left[ \nabla_{xy} g \right] \\
    &= \sum_{x \in X} [\Delta g(x)] f(x) \mu(x),
    \end{aligned}
    \end{equation}
    \noindent for every couple of functions \( f, g \in C(X) \) such that at least one between \( f \) and \( g \) belongs to \( C_c(X) \).
\end{enumerate}
\end{proposition}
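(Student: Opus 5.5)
All three identities are direct computations from the definitions of $\nabla_{xy}$ and $\Delta$. Local finiteness (\cref{remark_locally_finite_laplacian}) guarantees that every sum over $y$ is finite. We verify a), then derive b) from it, then prove c) by a symmetrization argument.

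For a), we add and subtract $f(x)g(y)$:
\[
f(y)g(y)-f(x)g(x)=f(x)\bigl[g(y)-g(x)\bigr]+\bigl[f(y)-f(x)\bigr]g(y).
\]
For b), we substitute $g(y)=g(x)+\nabla_{xy}g$ into the second term of a). This gives
\[
\nabla_{xy}(fg)=f(x)\nabla_{xy}g+g(x)\nabla_{xy}f+\nabla_{xy}f\,\nabla_{xy}g .
\]
We then multiply by $\omega(x,y)/\mu(x)$ and sum over $y$. All sums are finite, so they can be split term by term, and this yields \eqref{Laplacian_product_formula} at once.

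For c), assume without loss of generality that $g\in C_c(X)$; the case $f\in C_c(X)$ follows from the same argument with the roles exchanged. Then
\[
\sum_x \Delta f(x)g(x)\mu(x)=\sum_x\sum_y \omega(x,y)\bigl[f(y)-f(x)\bigr]g(x).
\]
The main point is to justify absolute convergence of the double sum. The outer sum runs over the finite set $\operatorname{supp} g$. For each such $x$, the inner sum runs over the finitely many neighbours of $x$. Hence only finitely many pairs $(x,y)$ have $\omega(x,y)g(x)\neq0$, and the double sum is a finite sum, which we may rearrange freely.

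Swapping the names $x\leftrightarrow y$ and using the symmetry $\omega(x,y)=\omega(y,x)$, the same quantity equals
\[
-\sum_{x,y}\omega(x,y)\bigl[f(y)-f(x)\bigr]g(y).
\]
This is again a finite sum: nonzero terms require $y\in\operatorname{supp} g$ and $x\sim y$. Averaging the two expressions gives
\[
-\tfrac12\sum_{x,y}\omega(x,y)\,\nabla_{xy}f\,\nabla_{xy}g .
\]
The right-hand side is symmetric in $f$ and $g$. Running the same computation with the roles of $f$ and $g$ exchanged, which is legitimate because every term still involves $\omega(x,y)$ with $x$ or $y$ in the finite set $\operatorname{supp} g$, gives the second equality in \eqref{integration_by_parts}.

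The only delicate step is this finiteness and rearrangement justification. Local finiteness makes it elementary here; without it one would instead need $f\in\mathcal D_\Delta(X)$ together with Fubini's theorem.
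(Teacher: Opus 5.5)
Your proof is correct. The add-and-subtract identity gives a), and substituting $g(y)=g(x)+\nabla_{xy}g$ and summing against $\omega(x,y)/\mu(x)$ gives b). In c) you rightly note that, once one function has finite support, local finiteness turns every double sum into a finite sum. This includes $\sum_x \Delta g(x) f(x)\mu(x)$, whose terms vanish unless $x$ lies in or next to $\operatorname{supp} g$, so the relabelling $x\leftrightarrow y$ and the averaging are legitimate. The paper gives no proof of this proposition; it only recalls these identities as standard, and your argument is the usual direct verification.
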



\subsection{The combinatorial distance, inner and outer degrees}\label{subsection_inner_outer_degrees}

\noindent Throughout the following, we shall adopt the notation \( \mathbb{N}_0 \) to denote the set \( \mathbb{N} \cup \{ 0 \} \).

\begin{definition}\label{definition_combinatorial_distance}
Let \( (X,\omega,\mu) \) be a connected weighted graph. We define the combinatorial graph distance \( \rho : X \times X \to \mathbb{N}_0 \) as follows: for any pair of vertices \( x, y \in X \), \( \rho(x,y) \) denotes the minimal number of edges in a path connecting \( x \) to \( y \). In other words, we set:
\[
\rho(x,y) := \inf \left\{ n : \gamma = \{x_k\}_{k=0}^n \text{ is a path between } x \text{ and } y \right\}, \hspace{1.8em} \text{for all } x,y \in X.
\]
\noindent Furthermore, given a nonempty finite subset \( \Omega \subset X \), we define the distance from any vertex of the graph to \( \Omega \) as
\[
\rho(x,\Omega) := \min_{y \in \Omega} \hspace{0.1em} \rho(x,y) \hspace{1.8em} \text{for all } x \in X.
\]
\noindent Within this framework, for any \( r \in \mathbb{N}_0 \), we refer to the shell of radius \( r \) centered at \( \Omega \) as
\[
S_r(\Omega) := \left\{ x \in X : \rho(x, \Omega) = r \right\}.
\]
\end{definition}

\noindent We point out a mild abuse of notation in the preceding definition: for a fixed vertex \( x \in X \), we write \( \rho(x,\cdot) \) both for the distance from \( x \) to another vertex of the graph and for the distance from \( x \) to the finite set \( \Omega \subset X \). \noindent The meaning will always be clear from the context.

\medskip
\noindent We also observe that the combinatorial distance is well defined on \( X \times X \), since the weighted graph \( (X,\omega,\mu) \) is assumed to be connected. Moreover, it is straightforward to verify that \( \rho \) actually defines a metric on \( X \).

\medskip
\noindent It is immediate to see that   for all \( x \in X \)
\begin{equation}\label{equivalence_S_0}
\rho(x,\Omega) = 0 \hspace{0.3em} \Leftrightarrow \hspace{0.3em} x \in \Omega.
\end{equation}
This is equivalent to writing
\[
\Omega = S_0(\Omega).
\]

\noindent We now proceed by stating a few elementary results that will be useful later on.

\begin{lemma}\label{lemma_partition_implications_rho}
Consider the framework and the notations introduced in \emph{\cref{definition_combinatorial_distance}}. Then:
\begin{enumerate}
\renewcommand{\labelenumi}{\alph{enumi})}
\item the following identity holds:
\[
X = \bigcup_{r=0}^{+\infty} S_r(\Omega);
\]

\item if the weighted graph \( (X,\omega,\mu) \) is also locally finite, then for each \( r \in \mathbb{N}_0 \) the shell \( S_r(\Omega) \) is nonempty and finite.
\end{enumerate}
\end{lemma}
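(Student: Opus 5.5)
Part a) follows directly from the fact that $\rho(\cdot,\Omega)$ is well defined and takes values in $\mathbb{N}_0$. Since the graph is connected, $\rho(x,y)$ is a finite nonnegative integer for every $x,y\in X$. Because $\Omega$ is nonempty and finite, $\rho(x,\Omega)=\min_{y\in\Omega}\rho(x,y)$ is a minimum over finitely many elements of $\mathbb{N}_0$, hence itself an element $r\in\mathbb{N}_0$. Then $x\in S_r(\Omega)$, which gives $X\subseteq\bigcup_r S_r(\Omega)$. The reverse inclusion is trivial, and the shells are pairwise disjoint by construction.

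For part b), finiteness and nonemptiness are separate claims, and I would prove them in that order.

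\emph{Finiteness.} I will argue by induction on $r$ that the closed ball $\{x:\rho(x,\Omega)\le r\}$ is finite. For $r=0$ this ball is $\Omega$, by \eqref{equivalence_S_0}. Next, take $x$ with $\rho(x,\Omega)=r+1$. A shortest path from $\Omega$ to $x$ has a predecessor $z$ of $x$ with $\rho(z,\Omega)\le r$ and $z\sim x$, so $x$ is a neighbor of some vertex in the previous ball. Hence the ball of radius $r+1$ is contained in the ball of radius $r$ together with the union of the neighbor sets of its elements. This is a finite union of finite sets, by local finiteness. Since $S_r(\Omega)$ is contained in this ball, it is finite.

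\emph{Nonemptiness.} This is where the infinitude of $X$ is essential, and it is the main point of the proof. Suppose $S_{r_0}(\Omega)=\emptyset$ for some $r_0$. I claim that then $S_r(\Omega)=\emptyset$ for all $r\ge r_0$. Indeed, if $x\in S_r(\Omega)$ with $r\ge r_0$, then along a shortest path from $\Omega$ to $x$ the distance to $\Omega$ increases by exactly one at each step. It cannot increase by more, since $|\rho(z,\Omega)-\rho(z',\Omega)|\le 1$ for $z\sim z'$ by the triangle inequality. It cannot increase by less, by minimality of the path. So this path passes through a vertex at distance exactly $r_0$, contradicting $S_{r_0}(\Omega)=\emptyset$. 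Consequently, by part a), $X=\bigcup_{r<r_0}S_r(\Omega)$ would be a finite union of finite sets, hence finite. This contradicts the assumption that $X$ is infinitely countable. The only delicate step is the "exactly one" property of geodesics, and I would state it explicitly via the triangle inequality for $\rho$.
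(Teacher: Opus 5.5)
Your proof is correct and complete. Part a) follows from connectedness and the finiteness of $\Omega$. Finiteness of the shells comes from your induction on the closed $\rho(\cdot,\Omega)$-balls using local finiteness. Nonemptiness comes from the infinitude of $X$ together with the fact that $\rho(\cdot,\Omega)$ changes by at most one across an edge.

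The paper states this lemma as an elementary fact and gives no proof, so there is no argument of its own to compare yours against. Your argument supplies the standard reasoning.

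One step can be shortened. You do not need the ``exactly one'' property of geodesics. The bound $|\rho(z,\Omega)-\rho(z',\Omega)|\le 1$ for $z\sim z'$, combined with a discrete intermediate-value argument along a path from $\Omega$ to $x$, already forces that path to pass through $S_{r_0}(\Omega)$.
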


\begin{remark}\label{remark_partition_X_shells}
\noindent Notice that, if \( (X,\omega,\mu) \) is connected and locally finite, then the family \( \{ S_r(\Omega) \}_{r=0}^{+\infty} \) is a partition of the vertex set \( X \). This follows directly from the identity a) in \emph{\cref{lemma_partition_implications_rho}}, and from the fact that the shells are both nonempty, by property b) of \emph{\cref{lemma_partition_implications_rho}}, and pairwise disjoint, by definition.
\end{remark}

\begin{definition}\label{definition_inner_outer_degrees}
Consider the framework and the notations introduced in \emph{\cref{definition_combinatorial_distance}}, and set
\[
r(x) := \rho(x,\Omega) \hspace{1.8em} \text{for all } x \in X.
\]
\noindent Then, for any \( x \in X \), we define the two functions \( \mathfrak{D}_-, \hspace{0.05em} \mathfrak{D}_+ : X \to [0,+\infty) \) as
\[
\mathfrak{D}_-(x) :=
\begin{cases}
0 & \text{ if } \hspace{0.1em} r(x) = 0 \\
\frac{1}{\mu(x)} \hspace{0.1em} \sum \limits_{y \in S_{r(x)-1}(\Omega)} \omega(x,y) & \text{ if } r(x) \geq 1
\end{cases}
\]
\noindent and
\[
\mathfrak{D}_+(x) := \frac{1}{\mu(x)} \sum \limits_{y \in S_{r(x)+1}(\Omega)} \omega(x,y), \hspace{1.8em} \text{for all } x \in X.
\]
\noindent We refer to \( \mathfrak{D}_- \) and \( \mathfrak{D}_+ \), respectively, as the inner and the outer degree with respect to \( \Omega \).
\end{definition}

\medskip
\noindent We observe that, thanks to \eqref{equivalence_S_0}, the definition of inner degree can be expressed in terms of \( \Omega \), namely:
\[
\mathfrak{D}_-(x) =
\begin{cases}
0 & \text{ if } \hspace{0.1em} x \in \Omega \\
\frac{1}{\mu(x)} \hspace{0.1em} \sum \limits_{y \in S_{r(x)-1}(\Omega)} \omega(x,y) & \text{ if } x \in X \setminus \Omega.
\end{cases}
\]


\subsection{Spherically symmetric functions}\label{subsection_spherically_symmetric_functions}

\begin{definition}\label{definition_spherically_symmetric_function}
We say that a function \( f \in C(X) \) is spherically symmetric with respect to \( \Omega \) if the following condition holds:
\[
f(x) = f(y) \hspace{1.8em} \text{for all } x, y \in X \text{ such that } \rho(x,\Omega) = \rho(y,\Omega).
\]
\noindent In this case, the values taken by \( f \) depend solely on the distance of the vertices from the finite subset \( \Omega \), and we may therefore write, for any \( r \in \mathbb{N}_0 \):
\[
f(x) = f(r) \hspace{1.8em} \text{for all } x \in S_r(\Omega).
\]
\end{definition}

\medskip
\noindent We remark that the final equality in the previous definition involves an abuse of notation, as we use the same symbol \( f \) to denote both the function defined on the vertex set \( X \) and the one defined on \( \mathbb{N}_0 \). Nevertheless, the meaning will always be clear from the context.

\medskip
\begin{definition}\label{definition_weakly_spherically_symmetric}
We say that the weighted graph \( (X, \omega, \mu) \) is weakly spherically symmetric with respect to \( \Omega \) if both the inner degree \( \mathfrak{D}_- \) and the outer degree \( \mathfrak{D}_+ \) are spherically symmetric with respect to \( \Omega \). In this case, in accordance with \emph{\cref{definition_spherically_symmetric_function}}, and with a slight abuse of notation, we write, for any \( r \in \mathbb{N}_0 \):
\[
\mathfrak{D}_\pm(x) = \mathfrak{D}_\pm(r) \hspace{1.8em} \text{for all } x \in S_r(\Omega).
\]
\end{definition}

\medskip
\noindent We also recall the following result (see, e.g., \cite{BP}).

\medskip
\begin{lemma}\label{lemma_formula_Laplacian_for_WSS_graphs}
Let \( (X, \omega, \mu) \) be a connected, locally finite weighted graph, and consider a nonempty finite subset \( \Omega \subset X \). Moreover, let \( f \in C(X) \) be a spherically symmetric function with respect to \( \Omega \). Then, for any \( x \in X \setminus \Omega \), the following identity holds:
\[
\Delta f(x) = \mathfrak{D}_+(x) \left[ f(r(x)+1) - f(r(x)) \right] + \mathfrak{D}_-(x) \left[ f(r(x)-1) - f(r(x)) \right].
\]
\noindent On the other hand, for all \( x \in \Omega \) it holds:
\[
\Delta f(x) = \mathfrak{D}_+(x) \left[ f(1) - f(0) \right].
\]
\end{lemma}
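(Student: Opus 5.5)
We start from the definition of the Laplacian and split the sum over neighbours of $x$ according to the shells they lie in. Since $(X,\omega,\mu)$ is locally finite, $\Delta f(x)$ is a finite sum, so no convergence issues arise (\cref{remark_locally_finite_laplacian}).

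\textbf{Step 1: neighbours lie in adjacent shells.} Let $x\in S_r(\Omega)$ and $y\sim x$. Then $\rho(x,y)=1$. By the triangle inequality for the distance to $\Omega$,
\[
|\rho(y,\Omega)-\rho(x,\Omega)|\le \rho(x,y)=1 .
\]
To see this, pick $z\in\Omega$ attaining $\rho(x,\Omega)$. Then $\rho(y,\Omega)\le \rho(y,z)\le \rho(y,x)+\rho(x,z)=1+r$, and symmetrically $r\le 1+\rho(y,\Omega)$. Hence every neighbour of $x$ lies in $S_{r-1}(\Omega)\cup S_r(\Omega)\cup S_{r+1}(\Omega)$. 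When $r=0$, the shell $S_{-1}$ simply does not occur.

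\textbf{Step 2: split the sum.} By \cref{remark_partition_X_shells} the shells are pairwise disjoint, so
\[
\Delta f(x)=\frac{1}{\mu(x)}\sum_{j\in\{r-1,r,r+1\}}\ \sum_{y\in S_j(\Omega)}\omega(x,y)\,[f(y)-f(x)].
\]
By spherical symmetry, $f(y)=f(j)$ for $y\in S_j(\Omega)$ and $f(x)=f(r)$. Each bracket is therefore constant on its shell and can be pulled out of the inner sum.

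\textbf{Step 3: evaluate the three terms.} The term with $j=r$ vanishes, because $f(y)-f(x)=f(r)-f(r)=0$. By \cref{definition_inner_outer_degrees}, the remaining coefficients $\frac{1}{\mu(x)}\sum_{y\in S_{r\pm1}}\omega(x,y)$ are exactly $\mathfrak{D}_\pm(x)$. This gives the first formula for $x\notin\Omega$, i.e.\ for $r\ge1$.

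For $x\in\Omega$ we have $r=0$ and there is no inner shell; consistently, $\mathfrak{D}_-(x)=0$ by definition. Only the outer term survives, giving $\Delta f(x)=\mathfrak{D}_+(x)[f(1)-f(0)]$.

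The only point requiring care is Step 1, namely the $1$-Lipschitz property of $\rho(\cdot,\Omega)$ along edges. Everything else is bookkeeping.
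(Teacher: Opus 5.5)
Your argument is correct and complete: the $1$-Lipschitz bound for $\rho(\cdot,\Omega)$ along edges confines the neighbours of $x$ to $S_{r-1}(\Omega)\cup S_r(\Omega)\cup S_{r+1}(\Omega)$, the same-shell terms cancel by spherical symmetry, and the remaining coefficients are by definition $\mathfrak{D}_\pm(x)$. The paper gives no proof of this lemma and only cites \cite{BP}, but your shell decomposition is the standard argument it relies on, and the authors reuse it with absolute values in the proof of \cref{lemma_double_sum}.
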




\subsection{Functional spaces}\label{subsection_functional_spaces}

\noindent For any \( p \in [1,+\infty] \), we define the weighted sequence spaces as
\[
\begin{split}
\ell^p(X, \mu) := & \left\{ f \in C(X) : \sum_{x \in X} |f(x)|^p \, \mu(x) < +\infty \right\}, \quad \text{whenever } p \in [1,+\infty), \\
\ell^{\infty}(X, \mu) \equiv & \, \, \ell^{\infty}(X) := \left\{ f \in C(X) : \sup_{x \in X} |f(x)| < +\infty \right\},
\end{split}
\]
\noindent and equip them with the following norms:
\[
\begin{split}
&\|f\|_p := \left[ \sum_{x \in X} |f(x)|^p \, \mu(x) \right]^{1/p} \qquad \text{for all } p \in [1,+\infty), \\
&\|f\|_{\infty} := \sup_{x \in X} |f(x)|.
\end{split}
\]

%


\subsection{Model trees}\label{subsection_model_trees}

\begin{definition}\label{definition_model_tree}
Consider a connected weighted graph \( (\mathbb{T},\omega_0,\mu_1) \) and a reference vertex \( x_0 \in \mathbb{T} \), and define the shells of radius \( r \in \mathbb{N}_0 \) as
\[
S_r(x_0) := \left\{ x \in \mathbb{T} : \rho(x,x_0) = r \right\},
\]
\noindent where \( \rho \) denotes the combinatorial distance, introduced in \emph{\cref{definition_combinatorial_distance}}. Assume that the following properties hold:
\begin{enumerate}
\renewcommand{\labelenumi}{\alph{enumi})}
    \item the edge weight \( \omega_0 : \mathbb{T} \times \mathbb{T} \to \{0,1\} \) is defined as
    \begin{equation}\label{definition_omega_0}
    \omega_0(x,y) :=
    \begin{cases}
    1 & \text{ if } x \sim y \\
    0 & \text{ otherwise;}
    \end{cases}
    \end{equation}
    \item there are no edges connecting vertices within the same shell, i.e., for any \( r \in \mathbb{N}_0 \) it holds:
    \[
    \omega_0(x,y) = 0 \hspace{1.8em} \text{for all } (x,y) \in S_r(x_0) \times S_r(x_0);
    \]
    \item \( \mu_1(x) \equiv 1 \) for every \( x \in \mathbb{T} \);
    \item the inner degree with respect to the set \( \left\{ x_0 \right\} \) has the following expression:
    \[
    \begin{aligned}
    \mathfrak{D}_-(x) & =
    \begin{cases}
    0 & \text{ if } \hspace{0.1em} x = x_0 \\
    \left| \{ y \in \mathbb{T} : y \sim x, \rho(y,x_0) = \rho(x,x_0) - 1 \} \right| & \text{ if } x \in \mathbb{T} \setminus \{ x_0 \}
    \end{cases}
    \\
    & =
    \begin{cases}
    0 & \text{ if } x = x_0 \\
    1 & \text{ if } x \in \mathbb{T} \setminus \{ x_0 \};
    \end{cases}
    \end{aligned}
    \]
    \item the degree function is spherically symmetric with respect to \( \left\{ x_0 \right\} \), that is, \( \deg \in C(\mathbb{T}) \) and it holds:
    \[
    \deg(x) = \deg(y) \hspace{1.8em} \text{for all } x, y \in \mathbb{T} \text{ such that } \rho(x,x_0) = \rho(y,x_0),
    \]
    \noindent hence, with the same abuse of notation introduced in \emph{\cref{definition_spherically_symmetric_function}}, for any \( r \in \mathbb{N}_0 \) we can write
    \[
    \deg(x) = \deg(r) \hspace{1.8em} \text{for all } x \in S_r(x_0).
    \]
\end{enumerate}
\noindent Then \( (\mathbb{T}, \omega_0, \mu_1) \) is said to be a model tree, and the reference vertex \( x_0 \) is referred to as the root of the model.

\medskip
\noindent Finally, we define the branching function \( b : \mathbb{N}_0 \to \mathbb{N} \) associated to the model tree \( (\mathbb{T}, \omega_0, \mu_1) \) as follows:
\[
b(r) :=
\begin{cases}
\deg(x_0) & \text{ if } r = 0 \\
\deg(r) - 1 & \text{ if } r \in \mathbb{N}.
\end{cases}
\]
\noindent If the branching function is constant, namely there exists \( b \in \mathbb{N} \) for which
\[
b(r) = b \hspace{1.6em} \text{for all } r \in \mathbb{N}_0,
\]
\noindent then the model tree is said to be homogeneous, and we denote it as \( (\mathbb{T}_b,\omega_0,\mu_1) \).
\end{definition}

\noindent We highlight that, in practical terms, the branching value \( b(r) \) represents the number of neighbors of any vertex in \( S_r(x_0) \) belonging to \( S_{r+1}(x_0) \), or equivalently, the number of edges connecting each vertex in \( S_r(x_0) \) to a vertex in \( S_{r+1}(x_0) \).

We set
\[
r(x) := \rho(x,x_0), \hspace{1.8em} \text{for all } x \in \mathbb{T}.
\]

\noindent It is direct to see that, for every vertex \(x \neq x_0\), the quantities \(\mathfrak{D}_-(x)\) and \(\mathfrak{D}_+(x)\) represent the number of neighbors of \(x\) lying, respectively, in the preceding shell and in the following shell relative to the one containing \(x\).

\medskip
\noindent We now state, for future reference, the following elementary results.

\begin{lemma}\label{lemma_impliactions_model_trees}
Let \( (\mathbb{T}, \omega_0, \mu_1) \) be a model tree with root \( \{ x_0 \} \). Then:
\begin{enumerate}
\renewcommand{\labelenumi}{\alph{enumi})}
\item it holds
\[
\omega_0(x,y) = 1 \quad \Rightarrow \quad y \in S_{r(x)-1}(x_0) \cup S_{r(x)+1}(x_0),
\]
\noindent for all \( x \in \mathbb{T} \setminus \{ x_0 \} \) and \( y \in \mathbb{T} \), or equivalently
\[
\left\{ y \in \mathbb{T} : x \sim y \right\} \subseteq S_{r(x)-1}(x_0) \cup S_{r(x)+1}(x_0), \hspace{1.8em} \text{\emph{for all} } x \in \mathbb{T} \setminus \{ x_0 \};
\]

\item it holds
\[
\omega_0(y,x_0) = 1 \quad \Leftrightarrow \quad y \in S_1(x_0),
\]
\noindent for all \( y \in \mathbb{T} \), or equivalently
\begin{equation}\label{property_0_1_second_way_model_trees}
\left\{ y \in \mathbb{T} : y \sim x_0 \right\} = S_1(x_0);
\end{equation}

\item the outer degree with respect to the set \( \{ x_0 \} \) satisfies
\[
\mathfrak{D}_+(x) = b(r(x)) \hspace{1.8em} \text{for all } x \in \mathbb{T};
\]

\item \( (\mathbb{T},\omega_0,\mu_1) \) is weakly spherically symmetric with respect to \( \{ x_0 \} \).
\end{enumerate}
\end{lemma}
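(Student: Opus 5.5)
The four items follow directly from the definition of a model tree (\cref{definition_model_tree}): the no-intra-shell-edges condition b), the inner-degree condition d), and the spherical symmetry of $\deg$ in e). I treat them in order, since c) relies on a) and b), and d) relies on c).

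For a), fix $x\neq x_0$ and $y\sim x$. Since $\rho$ is a metric and $\rho(x,y)=1$, the triangle inequality gives $|r(y)-r(x)|\le 1$. So $y$ lies in $S_{r(x)-1}(x_0)$, $S_{r(x)}(x_0)$ or $S_{r(x)+1}(x_0)$. Condition b) rules out $y\in S_{r(x)}(x_0)$, which proves a).

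For b), suppose $y\sim x_0$. Then $r(y)\le1$, and $y\ne x_0$ because $\omega_0(x_0,x_0)=0$, so $y\in S_1(x_0)$. Conversely, let $y\in S_1(x_0)$. Then $y\neq x_0$, and by condition d) exactly one neighbour of $y$ lies in $S_0(x_0)=\{x_0\}$; hence $y\sim x_0$. This gives \eqref{property_0_1_second_way_model_trees}.

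For c), recall that $\mu_1\equiv1$ and $\omega_0\in\{0,1\}$, so $\deg(x)$ is the number of neighbours of $x$ and $\mathfrak D_\pm(x)$ count neighbours in the adjacent shells.
\begin{itemize}
\item If $x=x_0$: by b) every neighbour of $x_0$ lies in $S_1(x_0)$, so $\mathfrak D_+(x_0)=\deg(x_0)=b(0)$.
\item If $x\neq x_0$: by a) the neighbours of $x$ split between $S_{r(x)-1}(x_0)$ and $S_{r(x)+1}(x_0)$, so $\deg(x)=\mathfrak D_-(x)+\mathfrak D_+(x)$. Condition d) gives $\mathfrak D_-(x)=1$, hence $\mathfrak D_+(x)=\deg(r(x))-1=b(r(x))$.
\end{itemize}
Here I use the spherical symmetry of $\deg$ from e) to write $\deg(x)=\deg(r(x))$.

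For d), c) shows that $\mathfrak D_+(x)=b(r(x))$ depends only on $r(x)$. Likewise $\mathfrak D_-$ equals $0$ at $x_0$ and $1$ on $\mathbb T\setminus\{x_0\}$, so it also depends only on $r(x)$. Both are therefore spherically symmetric with respect to $\{x_0\}$, which is \cref{definition_weakly_spherically_symmetric}.

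There is no real obstacle here. The only point needing care is the converse direction of b), which uses the inner-degree condition d) rather than just the metric properties of $\rho$.
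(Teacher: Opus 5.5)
Your proof is correct and complete. The paper states this lemma as an elementary consequence of \cref{definition_model_tree} and gives no proof, so there is nothing to compare against. Your item-by-item verification is the natural argument: the triangle inequality plus the no-intra-shell-edges condition for a), and the splitting $\deg(x)=\mathfrak{D}_-(x)+\mathfrak{D}_+(x)$ with $\mathfrak{D}_-\equiv 1$ off the root for c).

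One small correction concerns your closing remark. The converse direction in b) does not need the inner-degree condition d). By the very definition of the combinatorial distance (\cref{definition_combinatorial_distance}), $\rho(y,x_0)=1$ means there is a one-edge path from $y$ to $x_0$, that is, $y\sim x_0$. Your route through d) is valid, but it is not the only way.
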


\begin{lemma}\label{lemma_properties_homogeneous_trees}
Let \( (\mathbb{T}_b,\omega_0,\mu_1) \) be a homogeneous model tree with root \( x_0 \) and branching \( b \in \mathbb{N} \). Then:
\begin{enumerate}
\renewcommand{\labelenumi}{\alph{enumi})}
    \item the inner degree with respect to the set \( \left\{ x_0 \right\} \) satisfies the following identities:
    \[
    \begin{split}
    & \mathfrak{D}_-(x_0) = 0, \\
    & \mathfrak{D}_-(x) = \left| \{ y \in \mathbb{T}_b : y \sim x, \rho(y,x_0) = \rho(x,x_0) - 1 \} \right| = 1 \hspace{1.8em} \text{for all } x \in \mathbb{T}_b \setminus \{ x_0 \};
    \end{split}
    \]

    \item the outer degree with respect to the set \( \left\{ x_0 \right\} \) satisfies the following identities:
    \[
    \mathfrak{D}_+(x) = \left| \{ y \in \mathbb{T}_b : y \sim x, \rho(y,x_0) = \rho(x,x_0) + 1 \} \right| = b \hspace{1.8em} \text{for all } x \in \mathbb{T}_b;
    \]

    \item \( (\mathbb{T}_b,\omega_0,\mu_1) \) is weakly spherically symmetric with respect to \( \{ x_0 \} \).
\end{enumerate}
\end{lemma}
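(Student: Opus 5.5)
The three claims all follow directly from \cref{definition_model_tree} specialised to constant branching $b(r)\equiv b$, combined with \cref{lemma_impliactions_model_trees}. I will verify a), b), c) in that order, since c) is a consequence of a) and b).

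For a), the identity $\mathfrak{D}_-(x_0)=0$ is part of \cref{definition_inner_outer_degrees}, because $r(x_0)=0$. Now take $x\neq x_0$. Since $\mu_1\equiv1$ and $\omega_0$ takes values in $\{0,1\}$, with value $1$ exactly on edges, the definition reduces to
\[
\mathfrak{D}_-(x)=\sum_{y\in S_{r(x)-1}(x_0)}\omega_0(x,y)=\bigl|\{y\sim x:\rho(y,x_0)=\rho(x,x_0)-1\}\bigr|.
\]
This cardinality equals $1$ by property d) of \cref{definition_model_tree}.

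For b), the same reduction gives
\[
\mathfrak{D}_+(x)=\bigl|\{y\sim x:\rho(y,x_0)=\rho(x,x_0)+1\}\bigr|.
\]
Part c) of \cref{lemma_impliactions_model_trees} then yields $\mathfrak{D}_+(x)=b(r(x))=b$, since the tree is homogeneous. Should one prefer to check this directly, for $x=x_0$ the neighbours of $x_0$ are exactly $S_1(x_0)$ by \eqref{property_0_1_second_way_model_trees}, so $\mathfrak{D}_+(x_0)=\deg(x_0)=b(0)=b$. For $x\neq x_0$, part a) of \cref{lemma_impliactions_model_trees} places every neighbour of $x$ in $S_{r(x)-1}(x_0)\cup S_{r(x)+1}(x_0)$. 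Exactly one of them lies in the inner shell by a), so $\mathfrak{D}_+(x)=\deg(x)-1=b(r(x))=b$.

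For c), $\mathfrak{D}_-$ depends only on whether $r(x)=0$, and $\mathfrak{D}_+$ is constant. Both are therefore spherically symmetric with respect to $\{x_0\}$, which is the content of \cref{definition_weakly_spherically_symmetric}. Alternatively, c) is just part d) of \cref{lemma_impliactions_model_trees}.

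There is no real obstacle. The only point needing care is the count for $x\neq x_0$: one must use that there are no edges within a shell (property b) of \cref{definition_model_tree}, encoded in part a) of \cref{lemma_impliactions_model_trees}). This guarantees that $\deg(x)$ splits exactly into inner plus outer neighbours.
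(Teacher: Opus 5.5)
Your proof is correct. The paper gives no proof of this lemma and lists it as an elementary consequence of the definition of a model tree and of the preceding lemma on model trees; your verification is exactly that reasoning: property d) of the definition for $\mathfrak{D}_-$, part c) of the preceding lemma with $b(r)\equiv b$ for $\mathfrak{D}_+$, and shell-wise constancy of both degrees for weak spherical symmetry. Your direct count $\mathfrak{D}_+(x)=\deg(x)-1=b$ for $x\neq x_0$, based on the absence of intra-shell edges, usefully avoids relying on part c) of that lemma, which the paper also leaves unproved.
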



\subsection{The integer lattice}\label{subsection_integer_lattice}

\begin{definition}\label{definition_lattice}
Let \( N \in \mathbb{N} \), and consider a connected weighted graph fulfilling the following properties:
\begin{enumerate}
\renewcommand{\labelenumi}{\alph{enumi})}
    \item the vertex set coincides with \( \mathbb{Z}^N \), that is, each vertex \( x \) is composed of \( N \) integer components:
    \[
    x = (x_1, x_2, \dots, x_N), \hspace{1.8em} \text{with } x_i \in \mathbb{Z} \text{ for each } i \in \left\{ 1, 2, \dots, N \right\};
    \]
    \item for each couple of vertices \( x,y \in \mathbb{Z}^N \), we have \( x \sim y \) if and only if there exists \( k \in \left\{ 1, 2, \dots, N \right\} \) such that
    \[
    y_k = x_k \pm 1 \hspace{1.2em} \text{and} \hspace{1.2em} y_i = x_i  \hspace{0.7em} \text{for each } i \in \left\{ 1, 2, \dots, N \right\} \setminus \{ k \};
    \]
    \item the edge weight \( \omega_0 \) satisfies the definition given in \eqref{definition_omega_0}, namely \( \omega_0 : \mathbb{Z}^N \times \mathbb{Z}^N \to \{0,1\} \) and
    \[
    \omega_0(x,y) :=
    \begin{cases}
    1 & \text{ if } x \sim y \\
    0 & \text{ otherwise;}
    \end{cases}
    \]
    \item the node measure is defined as
    \[
    \mu(x) \equiv 2N \hspace{1.8em} \text{for every } x \in \mathbb{Z}^N.
    \]
\end{enumerate}
\noindent Then the corresponding weighted graph \( (\mathbb{Z}^N,\omega_0,\mu) \) is referred to as the  \( N \)--dimensional integer lattice.
\end{definition}

\medskip
\begin{remark}\label{remark_lattice_N_1_WSS}
\noindent It can be proved that, if \( N \geq 2 \), then the integer lattice \( (\mathbb{Z}^N,\omega_0,\mu) \), equipped with the combinatorial metric \( \rho \), is not weakly spherically symmetric with respect to any nonempty, finite subset of \( \mathbb{Z}^N \). 
These considerations imply that, in the context of the integer lattice, it is not possible to apply the theory of weak spherical symmetry in every spatial dimension \( N \in \mathbb{N} \); for this reasons, it is natural to consider a different choice of distance. In particular, since \( \mathbb{Z}^N \) is a discrete subset of \( \mathbb{R}^N \), it is convenient to equip the lattice with the Euclidean distance.
\end{remark}

\begin{definition}\label{definition_euclidean_distance_lattice}
\noindent We equip the integer lattice \( (\mathbb{Z}^N,\omega_0,\mu) \) with the Euclidean distance \( d : \mathbb{Z}^N \times \mathbb{Z}^N \to [0,+\infty) \), defined as follows:
\[
d(x,y) := \left[ \sum_{i=1}^N |x_i - y_i|^2 \right]^{\frac{1}{2}} \hspace{1.8em} \text{for all } x,y \in \mathbb{Z}^N,
\]
\noindent for which we will also use the notation \( \left| x-y \right| \).
\end{definition}

\noindent For future reference, we characterize adjacency between two arbitrary vertices of the lattice.

\begin{lemma}\label{lemma_characterization_neighbors_lattice}
\noindent Consider the integer lattice graph \( (\mathbb{Z}^N,\omega_0,\mu) \), and let \( x,y \in \mathbb{Z}^N \). Then the following conditions are equivalent:
\begin{enumerate}
\renewcommand{\labelenumi}{\alph{enumi})}
    \item \( x \sim y \);
    \item \( d(x,y) = 1 \).
\end{enumerate}
\end{lemma}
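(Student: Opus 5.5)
Both implications follow by unwinding the definitions: adjacency in b) of \cref{definition_lattice}, and the Euclidean distance in \cref{definition_euclidean_distance_lattice}. No earlier result is needed beyond the fact that $d$ is computed componentwise.

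\emph{a) $\Rightarrow$ b).} Assume $x \sim y$. By property b) of \cref{definition_lattice} there is an index $k \in \{1,\dots,N\}$ with $y_k = x_k \pm 1$ and $y_i = x_i$ for every $i \neq k$. Hence
\[
\sum_{i=1}^N |x_i-y_i|^2 = |x_k-y_k|^2 = 1,
\]
so $d(x,y)=1$.

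\emph{b) $\Rightarrow$ a).} Assume $d(x,y)=1$, so that
\[
\sum_{i=1}^N |x_i-y_i|^2 = 1.
\]
Each term $|x_i-y_i|^2$ is the square of a nonnegative integer, so it lies in $\{0,1,4,9,\dots\}$. A finite sum of such numbers equals $1$ only when exactly one term equals $1$ and all the others vanish. Therefore there is a unique $k$ with $|x_k-y_k|=1$, that is $y_k=x_k\pm1$, and $y_i=x_i$ for all $i\neq k$. By property b) of \cref{definition_lattice}, this means $x\sim y$.

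The only step needing any care is the integrality observation in the converse: it excludes, for instance, two coordinates each differing by a non-integer amount whose squares sum to $1$. Since all coordinates are integers, this is immediate, so I expect no real obstacle.
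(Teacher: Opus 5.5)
Your proof is correct: both directions are a direct unwinding of the definitions, and the integrality observation (each $|x_i-y_i|^2\in\{0,1,4,\dots\}$, so a sum equal to $1$ forces exactly one coordinate to differ by $\pm1$ and all others to agree) is the only real content of the converse. The paper states this lemma without any proof, treating it as elementary, so your argument supplies the standard verification that the paper leaves out.
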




\section{Hypotheses and definition of solution}\label{section_hypotheses}

\begin{definition}\label{definition_PM}
Let \( (X, \omega, \mu) \) be a weighted graph and consider a pseudo metric \( d \) on \( X \). We say that \( d \) satisfies the property  \emph{(PM)} if the following conditions hold:
\begin{enumerate}
\renewcommand{\labelenumi}{\alph{enumi})}
  \item the jump size associated with \( d \) is positive and finite, namely
  \[
  0 < s = \sup\{ d(x,y) : x, y \in X, \text{ } x \sim y \} < +\infty;
  \]
  \item for any fixed \( x \in X \), the ball \( B_r(x) \) is a finite set for all \( r > 0 \);
  \item there exists \( C_0 > 0 \) such that \( d \) is 1--intrinsic with bound \( C_0 \).
\end{enumerate}
\end{definition}

\begin{assumption}\label{assumption_graph_pseudo_metric}
We suppose that \( (X, \omega, \mu) \) is an infinite, connected, weighted graph. Moreover, we assume that there exists a pseudo metric \( d \) on \( X \) satisfying property \emph{(PM)}, according to \emph{\cref{definition_PM}}.
\end{assumption}

\begin{remark}\label{remark_assumptions_G}
\noindent Observe that, under \emph{\cref{assumption_graph_pseudo_metric}}, the weighted graph \( (X, \omega, \mu) \) is necessarily locally finite. In fact, by the definition of the jump size \( s \in (0,+\infty) \), we have \( d(x,y) \leq s \) for every couple of neighbors \( y \sim x \), so that
\[
\{ y \in X : y \sim x \} \subseteq \overline{B}_s(x).
\]
\noindent Since the balls are finite by assumption, it follows that the set of neighbors of each vertex \( x \in X \) is finite.
\end{remark}

\begin{assumption}\label{hypf} We assume that $f:[0,\infty)\to[0,\infty)$ is a locally Lipschitz and convex function, with
$f(0)=0$ and $f(u)>0$ for all $u>0$.
Furthermore,
\begin{equation}\label{eq:osgood}
\int_{1}^{\infty}\frac{du}{f(u)}<\infty
\qquad\text{and}\qquad
\lim_{u\to\infty}\frac{f(u)}{u}=+\infty .
\end{equation}
\end{assumption}

\noindent In addition, we assume that the initial datum \( u_0 \) of problem \eqref{problem_HR_G} is nonnegative and bounded, namely:
\begin{equation}\label{eq:HP_ID_G}
u_0 : X \to [0,+\infty) \hspace{1.6em} \text{and} \hspace{1.6em} u_0 \in \ell^\infty(X).
\end{equation}

\medskip
\noindent \noindent Now that the working assumptions on the graph and its geometry have been clarified, we are ready to introduce the notion of solution for problem \eqref{problem_HR_G}. In particular, we restrict our attention to nonnegative classical solutions that remain bounded for all times prior to \( T \).

\medskip
Let $S_T := X \times (0, T).$

\begin{definition}\label{definition_sub_super_sols_G}
Let \( u_0 \) satisfy \eqref{eq:HP_ID_G} and consider a function \( u : X \times [0,T) \to \mathbb{R} \) such that
\begin{equation}\label{reg_sol_1_G}
  u(x,\cdot) \in C^1((0,T)) \cap C^0([0,T)) \hspace{1.6em} \text{for all } x \in X,
\end{equation}

\vspace{-1.5em}

\begin{equation}\label{reg_sol_2_G}
  u \in L^\infty(X \times [0,T']) \quad \text{ for all } \text{ } T' \in (0,T).
\end{equation}
\noindent Then \( u \) is said to be a solution of problem \eqref{problem_HR_G}, with initial condition \( u_0 \), if \( u \) satisfies \eqref{problem_HR_G} pointwise, and \( u \geq 0 \) in \( S_T \).
\end{definition}


\medskip
\noindent We explicitly highlight that, thanks to the regularity condition \eqref{reg_sol_2_G}, it is immediate to deduce that any solution \( u \) of problem \eqref{problem_HR_G} satisfies
\[
u(\cdot,t) \in \ell^\infty(X) \hspace{1.6em} \text{for each } t \in (0,T).
\]

\noindent Obviously, two mutually exclusive possibilities arise: either the maximal existence time is \( T = +\infty \), in which case the function \( u(\cdot,t) \) remains bounded in \( X \) for all times \( t > 0 \), or \( T \in (0,+\infty) \), and \( \|u(\cdot,t)\|_\infty \) exhibits a vertical asymptote at time \( T \). This dichotomy motivates the introduction of the following definition.

\begin{definition}\label{blow_up_global_G}
Let \( u \) be a solution of problem \eqref{problem_HR_G}. Then:
\begin{enumerate}
\renewcommand{\labelenumi}{\alph{enumi})}

  \item we say that \( u \) blows up in finite time, or equivalently that \( u \) is a nonglobal solution of problem \eqref{problem_HR_G}, if the corresponding maximal time of existence is finite, i.e., \( 0 < T < +\infty \), with
\[
u(\cdot,t) \in \ell^\infty(X) \hspace{1.2em} \text{for all } \text{ } t \in (0,T) \hspace{1.6em} \text{and} \hspace{1.6em} \lim_{t \to T^-} \|u(\cdot, t)\|_\infty = +\infty;
\]

\vspace{-1.18em}
  \item we say that \( u \) is a global solution of problem \eqref{problem_HR_G} if the corresponding maximal time of existence is \( T = +\infty \) and
\[
u(\cdot, t) \in \ell^\infty(X) \hspace{1.0em} \text{for all } \hspace{0.15em} t > 0.
\]
\end{enumerate}
\end{definition}



\section{Statement of the main results}\label{section_main_results}
Let  \emph{\cref{hypf}} be fulfilled. Define
\begin{equation}\label{defa}
s_0(\lambda):=\inf\Bigl\{u>0:\ \frac{f(u)}{u}>\lambda\Bigr\} \quad (\lambda>0).
\end{equation}
Clearly, $s_0(\lambda)\in [0, \infty).$ It is easily seen that, whenever $f(u)=u^p\;\, (p>1)$,
\[s_0(\lambda)=\lambda^{\frac{1}{p-1}}.\]

\subsection{Blow-up on general graphs}

\noindent To begin, we introduce an important property; in what follows, the functions which satisfy that will be referred to as \emph{barriers}.

\smallskip
\begin{definition}\label{def_TF_G}
We say that a function \( \varphi \in C(X) \) satisfies the property \emph{(\(\mathcal{B}_G\))} if the following conditions hold:
\begin{enumerate}
\renewcommand{\labelenumi}{\alph{enumi})}

\item \( \varphi(x) > 0 \) for all \( x \in X \);

\item \( \| \varphi \|_1 = \sum \limits_{x \in X} \varphi(x) \mu(x) = 1 \);

\item there exists a constant \( C_1 > 0 \) such that
\[
\sum_{\substack{x,y \in X \\ \hspace{1.0em} x \in \overline{B}_R \setminus B_{(1-\delta)R - 2s}}} \omega(x,y) \hspace{0.2em} |\nabla_{xy} \varphi| \hspace{0.1em} \leq \hspace{0.1em} C_1,
\]
\noindent for any \( R > 0, \delta \in (0,1) \);

\item there exists a constant \( \lambda > 0 \) for which \( \varphi \) satisfies the following inequality:
\[
\Delta \varphi(x) + \lambda \varphi(x) \geq 0 \hspace{2.5em} \text{for all } x \in X.
\]
\end{enumerate}
\end{definition}

We now state a general criterion that ensures finite-time blow-up of solutions.
\begin{theorem}\label{theorem_blow_up_chapter_3}
Let \emph{\cref{assumption_graph_pseudo_metric}} and \emph{\cref{hypf}} be in place, and consider a function \( u_0 \not \equiv 0 \) satisfying \eqref{eq:HP_ID_G}. Let \( u \) solve problem \eqref{problem_HR_G}. Furthermore, assume that \( \varphi \) satisfies property \emph{(\(\mathcal{B}_G\))}, with
\begin{equation}\label{assumption_proposition_blow_up_chapter_3}
\varphi \hspace{0.1em} \sup \limits_{t \in (t_0,T')} |u_t(\cdot,t)| \in \ell^1(X,\mu) \hspace{1.5em} \text{for any } \hspace{0.1em} t_0,T' \hspace{0.1em} \text{such that } \hspace{0.1em} 0 < t_0 < T' < T.
\end{equation}
\noindent If
\begin{equation}\label{largeness_u_0}
\sum_{x \in X} u_0(x) \hspace{0.1em} \varphi(x) \hspace{0.1em} \mu(x) > s_0(\lambda),
\end{equation}
then \( u \) blows up in finite time.
\end{theorem}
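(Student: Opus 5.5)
We argue by contradiction via Kaplan's method. Suppose $u$ exists on $(0,T)$ with $T=+\infty$, and set
\[
J(t):=\sum_{x\in X}u(x,t)\,\varphi(x)\,\mu(x).
\]
This is finite for every $t$ because $u(\cdot,t)\in\ell^\infty(X)$ and $\varphi\in\ell^1(X,\mu)$. The plan is to show that $J$ is a supersolution of an ODE that blows up in finite time, which contradicts the boundedness of $J$ on bounded time intervals.

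\textbf{Step 1: the differential inequality.} We show that $J$ is $C^1$ on $(0,\infty)$, continuous at $0$, and satisfies
\[
J'(t)\ \ge\ -\lambda J(t)+f(J(t)).
\]
Differentiability under the sum follows from \eqref{assumption_proposition_blow_up_chapter_3} and dominated convergence, since on any $[t_0,T']$ the increments are dominated by $\varphi\sup|u_t|\in\ell^1$. Continuity at $t=0$ follows from $u\in L^\infty(X\times[0,T'])$, $\varphi\in\ell^1$ and the pointwise continuity of $u(x,\cdot)$ at $0$. Then $J'(t)=\sum_x \Delta u\,\varphi\,\mu+\sum_x f(u)\varphi\mu$. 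Since $\varphi\ge0$ with $\|\varphi\|_1=1$, Jensen's inequality for the convex $f$ gives $\sum f(u)\varphi\mu\ge f(J)$. For the diffusion term we would like to write $\sum(\Delta u)\varphi\mu=\sum u\,\Delta\varphi\,\mu\ge-\lambda J$ using (d) of $(\mathcal B_G)$. However, \eqref{integration_by_parts} requires one of the two functions to lie in $C_c(X)$, and neither $u$ nor $\varphi$ does.

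\textbf{Step 2: justifying the integration by parts (the main obstacle).} We truncate with a cutoff $\eta_R(x)=\min\{1,\max\{0,(R-d(x,x_0))/(\delta R)\}\}$. It equals $1$ on $B_{(1-\delta)R}$, is supported in $B_R$ (a finite set by (PM)(b)), and satisfies $|\nabla_{xy}\eta_R|\le s/(\delta R)$ for $x\sim y$ by the jump-size bound. We apply \eqref{integration_by_parts} to $\eta_R\varphi\in C_c(X)$ and $u$, and expand $\Delta(\eta_R\varphi)$ with \eqref{Laplacian_product_formula}. This produces the main term $\sum u\,\eta_R\,\Delta\varphi\,\mu\ge-\lambda\sum u\eta_R\varphi\mu$ and two error terms:
\[
\sum_x u\,\varphi\,\Delta\eta_R\,\mu
\qquad\text{and}\qquad
\sum_{x,y}\omega(x,y)\,u(x)\,\nabla_{xy}\eta_R\,\nabla_{xy}\varphi .
\]
Both are supported where $\nabla\eta_R\ne0$, namely for $x$ in $\overline B_R\setminus B_{(1-\delta)R-2s}$ (allowing for neighbours within distance $s$). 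The gradient-product term is bounded by $\|u(t)\|_\infty\,\frac{s}{\delta R}\,C_1$ using (c), so it vanishes as $R\to\infty$. The term with $\Delta\eta_R$ is bounded by $\|u\|_\infty\frac{1}{\delta R}\sum_{x\notin B_{(1-\delta)R-s}}\varphi(x)\sum_y\omega(x,y)d(x,y)$. By the 1-intrinsic bound this is at most $\|u\|_\infty C_0\frac{1}{\delta R}\sum_{x\notin B_{(1-\delta)R-s}}\varphi\mu$, which tends to $0$ since $\varphi\in\ell^1$. The left-hand side $\sum\Delta u\,\eta_R\varphi\mu=\sum(u_t-f(u))\eta_R\varphi\mu$ converges by monotone or dominated convergence, using \eqref{assumption_proposition_blow_up_chapter_3} and the boundedness of $f(u)$. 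Letting $R\to\infty$ yields $\sum\Delta u\,\varphi\mu\ge-\lambda J$. Should Fubini or the pointwise limits need care, we would instead work with the time-integrated identity on $[t_0,t]$ and pass to the limit there, then send $t_0\to0$.

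\textbf{Step 3: ODE comparison.} Set $g(J)=f(J)-\lambda J$. By the definition \eqref{defa} and the convexity of $f$, the ratio $f(u)/u$ is nondecreasing, so $g>0$ for $J>s_0(\lambda)$. Since $J(0)>s_0(\lambda)$, $J$ stays above $J(0)$: whenever $J'>0$ holds the set $\{J>J(0)\}$ is invariant, with a small continuity argument near $t=0$. Moreover $g(J)\ge c f(J)$ for large $J$, because $f(u)/u\to\infty$, and $\int^\infty du/f<\infty$. Integrating $J'/g(J)\ge1$ then gives
\[
t\ \le\ \int_{J(0)}^{\infty}\frac{dv}{g(v)}<\infty,
\]
which is finite near $J(0)$ as well because $g(J(0))>0$. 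This contradicts $T=\infty$. Finally $J\le\|u(t)\|_\infty$, and by \cref{blow_up_global_G} the solution blows up in finite time. If the infimum in \eqref{defa} is attained with $f(s_0)/s_0=\lambda$, we use the strict inequality \eqref{largeness_u_0} to get $g(J(0))>0$.
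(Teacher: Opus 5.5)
Your proposal is correct and follows essentially the paper's proof: the same cut-off-times-$\varphi$ test function, integration by parts plus the product rule, error terms killed by the $1$-intrinsic bound and condition (c) of $(\mathcal{B}_G)$, Jensen, and an ODE comparison. The differences are minor: you integrate $J'/g(J)\ge 1$ directly instead of citing \cref{lemmaode}, and you usefully check the continuity of $J$ at $t=0$, which the paper leaves implicit.

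One small slip concerns your unshifted cut-off $\eta_R$. The points $x$ with $\nabla_{xy}\eta_R\neq 0$ lie in $B_{R+s}\setminus B_{(1-\delta)R-s}$, not in $\overline{B}_R\setminus B_{(1-\delta)R-2s}$. This is harmless: that set is contained in $\overline{B}_{R+s}\setminus B_{(1-\delta)(R+s)-2s}$, so (c) applied with $R+s$ in place of $R$ gives the same bound. The paper avoids the issue by shifting the cut-off by $s$, as in \eqref{COF_G}.
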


Let us observe that the validity of hypothesis \eqref{assumption_proposition_blow_up_chapter_3}  will be discussed in Remark \ref{remark_proposition_blow_up_chapter_3} below.

\begin{remark}
We observe that if $s_0(\lambda)=0$, condition \eqref{largeness_u_0} is satisfied for any nontrivial initial datum $u_0 \not\equiv 0$. On the other hand, if $s_0(\lambda)>0$, condition \eqref{largeness_u_0} imposes a size restriction on the initial datum $u_0$.
\end{remark}

\medskip

In the case of a general graph, we are able to establish the following blow-up result, which is obtained by applying the criterion provided by the previous theorem.
\begin{theorem}\label{theorem_blow_up_general}
Let \( (X,\omega,\mu) \) be a weighted graph satisfying \emph{\cref{assumption_graph_pseudo_metric}}, endowed with the combinatorial distance \( \rho \). Let \emph{\cref{hypf}} be
in place. Let \( \Omega \subset X \) be a nonempty finite subset and define the function \( S : \mathbb{N}_0 \to (0,+\infty) \) as
\begin{equation}\label{function_S}
S(r) := \sum_{x \in S_r(\Omega)} \mu(x) \hspace{1.8em} \text{for all } \hspace{0.1em} r \in \mathbb{N}_0.
\end{equation}
\noindent Furthermore, suppose that
\begin{enumerate}
\renewcommand{\labelenumi}{\alph{enumi})}
    \item \( \sup \limits_{x \in X} \hspace{0.1em} \mathfrak{D}_+(x) < +\infty \) \hspace{0.05em} and \hspace{0.05em} \( \sup \limits_{x \in X} \hspace{0.1em} \mathfrak{D}_-(x) < +\infty \);
    \item the series
    \[
    \sum_{r = 0}^{+\infty} S(r) \hspace{0.1em} e^{-a \hspace{0.05em} r}
    \]
    \noindent is convergent, for some \( a > 0 \).
\end{enumerate}

Consider also a function \( u_0 \not \equiv 0 \) satisfying \eqref{eq:HP_ID_G}, and let \( u \) solve problem \eqref{problem_HR_G}. Finally, let \( \varphi \in C(X) \) be defined as
\begin{equation}\label{definition_varphi_barrier_chapter 3}
\varphi(x) := C \hspace{0.1em} e^{-a \hspace{0.1em} r(x)} \hspace{1.0em} \text{for all } \hspace{0.1em} x \in X, \hspace{1.0em} \text{where} \hspace{1.0em} C := \left[ \sum_{r = 0}^{+\infty} S(r) \hspace{0.1em} e^{-a \hspace{0.05em} r} \right]^{-1},
\end{equation}
\medskip and assume that
\[
\lambda \geq \sup_{x \in X} \hspace{0.1em} \mathfrak{D}_+(x).
\]
\noindent If \eqref{assumption_proposition_blow_up_chapter_3} is fulfilled and
\[
\sum_{x \in X} u_0(x) \hspace{0.1em} \varphi(x) \hspace{0.1em} \mu(x) > s_0(\lambda),
\]
\noindent then \( u \) blows up in finite time.
\end{theorem}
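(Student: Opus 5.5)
The plan is to deduce the statement from \cref{theorem_blow_up_chapter_3}, taking as pseudo metric the combinatorial distance $\rho$ (or, more precisely, the metric for which (PM) holds and balls are defined). The only work is to check that $\varphi$ defined in \eqref{definition_varphi_barrier_chapter 3} satisfies property ($\mathcal{B}_G$) with the given $\lambda$. Once this is done, \eqref{assumption_proposition_blow_up_chapter_3} and the largeness condition are exactly the hypotheses of \cref{theorem_blow_up_chapter_3}, and blow-up follows.

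Conditions a) and b) are immediate. Clearly $\varphi>0$. By \cref{remark_partition_X_shells} the shells $S_r(\Omega)$ partition $X$, so
\[
\sum_{x\in X}\varphi(x)\mu(x)=C\sum_{r\ge0}e^{-ar}S(r)=1,
\]
using hypothesis b) and the definition of $C$.

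For condition d), I use \cref{lemma_formula_Laplacian_for_WSS_graphs}. That lemma is stated for spherically symmetric $f$, but its proof only uses that neighbors of $x\in S_r$ lie in $S_{r-1}\cup S_r\cup S_{r+1}$, and edges inside $S_r$ contribute zero. It therefore applies pointwise without weak spherical symmetry of the graph, with $\mathfrak{D}_\pm(x)$ evaluated at $x$. Then for $x\notin\Omega$
\[
\Delta\varphi(x)=C e^{-ar}\bigl[\mathfrak{D}_+(x)(e^{-a}-1)+\mathfrak{D}_-(x)(e^{a}-1)\bigr]\ge -\mathfrak{D}_+(x)(1-e^{-a})\varphi(x)\ge-\lambda\varphi(x).
\]
Here I drop the nonnegative $\mathfrak{D}_-$ term and use $1-e^{-a}<1$ together with $\lambda\ge\sup\mathfrak{D}_+$. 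The same bound holds on $\Omega$, where only the $\mathfrak{D}_+$ term appears.

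Condition c) is the main obstacle. I would bound the sum over \emph{all} $x,y$ instead of over the annulus. For $x\sim y$ we have $|r(x)-r(y)|\le1$, so
\[
|\nabla_{xy}\varphi|\le C e^{-a r(x)}\max(1-e^{-a},e^{a}-1)\le C(e^{a}-1)e^{-ar(x)}.
\]
Since $\nabla_{xy}\varphi=0$ for $y$ in the same shell as $x$,
\[
\sum_{x,y}\omega(x,y)|\nabla_{xy}\varphi|\le (e^a-1)\sum_x\bigl(\mathfrak{D}_+(x)+\mathfrak{D}_-(x)\bigr)\mu(x)\varphi(x)\le (e^a-1)\bigl(\sup\mathfrak{D}_++\sup\mathfrak{D}_-\bigr)\|\varphi\|_1.
\]
This is finite by hypothesis a), giving a $C_1$ independent of $R$ and $\delta$. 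Hypothesis a) is needed exactly here and for d).

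One should also check that (PM) holds for $\rho$ itself, with jump size $s=1$, or note that the paper's pseudo metric $d$ from \cref{assumption_graph_pseudo_metric} is the one entering the annuli in c). Since the bound on c) is global, the choice of metric does not affect the argument.
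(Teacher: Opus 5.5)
Your proposal is correct and follows the same route as the paper. The paper deduces the result from \cref{theorem_blow_up_chapter_3} by checking property (\(\mathcal{B}_G\)) for \(\varphi\) in \cref{corollary_property_B_G_general_graphs_1}: normalization via the shell partition, condition d) via the radial Laplacian formula together with \(\lambda \geq \sup_X \mathfrak{D}_+\), and condition c) via a bound on the full sum \(\sum_{x,y}\omega(x,y)|\nabla_{xy}\varphi|\) using hypothesis a). The only cosmetic difference is that you bound \(|\nabla_{xy}\varphi| \leq (e^a-1)\varphi(x)\) directly and conclude with \(\|\varphi\|_1=1\), whereas \cref{lemma_double_sum} regroups the sum over shells and uses the convergence of \(\sum_r S(r)e^{-ar}\); the two computations are equivalent.
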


\subsection{Blow-up on trees}

In the case of trees, by applying Theorem \ref{theorem_blow_up_chapter_3}, we prove the following two results: the first concerns general trees, while the second deals with homogeneous trees.
\begin{theorem}\label{theorem_blow_up_model_trees}
Let  \emph{\cref{hypf}} be fulfilled. Let \( (\mathbb{T},\omega_0,\mu_1) \) be a model tree, with root \( x_0 \in \mathbb{T} \) and branching function \( b \). Suppose that it holds
\begin{equation}\label{assumption_sup_branching_finite}
B := \sup_{r \in \mathbb{N}_0} b(r) < +\infty,
\end{equation}
\noindent and let \( a > \log(B) \). Consider also a function \( u_0 \not \equiv 0 \) satisfying \eqref{eq:HP_ID_G}, that is,
\[
u_0 : \mathbb{T} \to [0,+\infty) \hspace{1.6em} \text{and} \hspace{1.6em} u_0 \in \ell^\infty(\mathbb{T}),
\]
\noindent and let \( u \) solve problem \eqref{problem_HR_G} with $X=\mathbb{T}$. Finally, let \( \varphi \in C(\mathbb{T}) \) be defined as
\begin{equation}\label{definition_varphi_model_trees}
\varphi(x) := C \hspace{0.1em} e^{-a \hspace{0.1em} r(x)} \hspace{1.0em} \text{for all } \hspace{0.1em} x \in \mathbb{T}, \hspace{1.0em} \text{where} \hspace{1.0em} C := \left\{ 1 + \sum_{r = 1}^{+\infty} \left[ \prod_{k=0}^{r-1} b(k) \right] e^{-a \hspace{0.05em} r} \right\}^{-1},
\end{equation}
\medskip and assume that \( \lambda \geq B \).

\medskip
\noindent If
\[
\sum_{x \in \mathbb{T}} u_0(x) \hspace{0.1em} \varphi(x) > s_0(\lambda),
\]
\noindent then \( u \) blows up in finite time.
\end{theorem}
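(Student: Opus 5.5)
The plan is to apply \cref{theorem_blow_up_chapter_3} to the tree $(\mathbb{T},\omega_0,\mu_1)$ with the combinatorial distance $\rho$ and the barrier $\varphi$ of \eqref{definition_varphi_model_trees}. The work consists of four steps: checking \cref{assumption_graph_pseudo_metric}, verifying each item of property $(\mathcal{B}_G)$, showing that \eqref{assumption_proposition_blow_up_chapter_3} holds, and noting that the largeness condition is exactly the hypothesis (since $\mu_1\equiv1$).

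\textbf{Step 1: the graph hypotheses.} The jump size of $\rho$ is $s=1$. Balls are finite because the degrees are bounded: $\deg(r)=b(r)+1\le B+1$ for $r\ge1$ and $\deg(x_0)=b(0)\le B$. The metric $\rho$ is 1-intrinsic with $C_0=B+1$, since $\sum_y\omega_0(x,y)\rho(x,y)=\deg(x)$.

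\textbf{Step 2: properties a), b), d).} Positivity of $\varphi$ is obvious. For the normalisation, a model tree satisfies $|S_r(x_0)|=\prod_{k=0}^{r-1}b(k)$. This follows by induction, since each vertex of $S_r$ has exactly $b(r)$ children and exactly one parent. Hence $\sum_x e^{-ar(x)} = 1+\sum_{r\ge1}\prod_{k<r}b(k)\,e^{-ar}\le \sum_r (Be^{-a})^r<\infty$ because $a>\log B$, and so $\|\varphi\|_1=1$ by the choice of $C$.

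For d), use \cref{lemma_formula_Laplacian_for_WSS_graphs} together with \cref{lemma_impliactions_model_trees}. For $x\neq x_0$ with $r=r(x)$,
\[
\Delta\varphi(x)=b(r)\,C e^{-ar}(e^{-a}-1)+C e^{-ar}(e^{a}-1)\ge -b(r)\varphi(x)\ge -B\varphi(x),
\]
and at the root $\Delta\varphi(x_0)=b(0)C(e^{-a}-1)\ge -B\varphi(x_0)$. Since $\lambda\ge B$, this gives $\Delta\varphi+\lambda\varphi\ge0$.

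\textbf{Step 3: property c).} Bound the restricted sum by the full sum $\sum_{x,y}\omega_0(x,y)|\nabla_{xy}\varphi|$. Each edge between $S_r$ and $S_{r+1}$ contributes $2Ce^{-ar}(1-e^{-a})$, counted twice because of the ordered pairs. The number of such edges is $|S_{r+1}|\le B^{r+1}$. The total is therefore at most $2C(1-e^{-a})B\sum_r (Be^{-a})^r<\infty$, a constant $C_1$ independent of $R$ and $\delta$.

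\textbf{Step 4: condition \eqref{assumption_proposition_blow_up_chapter_3}.} I expect this to be the main obstacle, since it is an a priori regularity property of the solution rather than of $\varphi$. Fix $0<t_0<T'<T$ and let $M:=\|u\|_{L^\infty(X\times[0,T'])}$, finite by \eqref{reg_sol_2_G}. From the equation,
\[
|u_t|\le |\Delta u|+f(u)\le 2\,\mathrm{Deg}(x)\,M+\max_{[0,M]}f \le 2(B+1)M+\max_{[0,M]}f
\]
uniformly on $X\times(t_0,T')$. So $\sup_t|u_t(\cdot,t)|$ is bounded, and since $\varphi\in\ell^1$ the product lies in $\ell^1(X,\mu)$. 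Presumably this is what Remark~\ref{remark_proposition_blow_up_chapter_3} records, namely that bounded weighted degree suffices. If not, the displayed estimate settles it directly.

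With all hypotheses of \cref{theorem_blow_up_chapter_3} verified, blow-up follows. The only delicate points are the shell cardinality identity and the uniform derivative bound.
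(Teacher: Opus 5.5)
Your proposal is correct and follows essentially the same route as the paper: you apply Theorem~\ref{theorem_blow_up_chapter_3} after checking Assumption~\ref{assumption_graph_pseudo_metric} for $(\mathbb{T},\rho)$ with $s=1$, using the shell count $|S_r(x_0)|=\prod_{k=0}^{r-1}b(k)$ to verify property $(\mathcal{B}_G)$ for $\lambda\ge B$, and obtaining the integrability condition from bounded weighted degree, which is exactly Remark~\ref{remark_proposition_blow_up_chapter_3}. The only difference is cosmetic: the paper gets $(\mathcal{B}_G)$ by checking hypotheses a)--b) of Theorem~\ref{theorem_blow_up_general} and invoking Corollary~\ref{corollary_property_B_G_general_graphs_1}, whereas you carry out the same computations directly on the tree.
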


\begin{theorem}\label{theorem_blow_up_homogeneous_trees}
Let  \emph{\cref{hypf}} be fulfilled.
\noindent Let \( (\mathbb{T}_b, \omega_0, \mu_1) \) be a homogeneous tree, with root \( x_0 \in \mathbb{T}_b \) and constant branching \( b \in \mathbb{N} \), and suppose that \( a > \log (b) \). Consider also a function \( u_0 \not \equiv 0 \) satisfying \eqref{eq:HP_ID_G}, namely
\[
u_0 : \mathbb{T}_b \to [0,+\infty) \hspace{1.6em} \text{and} \hspace{1.6em} u_0 \in \ell^\infty(\mathbb{T}_b),
\]
\noindent and let \( u \) solve problem \eqref{problem_HR_G} with $X=\mathbb{T}_b$. Finally, let \( \varphi \in C(\mathbb{T}_b) \) be defined as
\begin{equation}\label{definition_varphi_homogeneous_trees}
\varphi(x) := \left( 1 - b \hspace{0.05em} e^{-a} \right) \hspace{0.1em} e^{-a \hspace{0.1em} r(x)} \hspace{1.8em} \text{for all } x \in \mathbb{T}_b,
\end{equation}
and assume that \( \lambda \geq b \).

\medskip
\noindent If
\[
\sum_{x \in \mathbb{T}_b} u_0(x) \hspace{0.1em} \varphi(x) > s_0(\lambda),
\]
\noindent then \( u \) blows up in finite time.
\end{theorem}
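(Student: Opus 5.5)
The plan is to deduce the statement from \cref{theorem_blow_up_model_trees}, since a homogeneous tree \( (\mathbb{T}_b,\omega_0,\mu_1) \) is a model tree with constant branching function \( b(r)\equiv b \). Then \( B=\sup_{r} b(r)=b<+\infty \), so \eqref{assumption_sup_branching_finite} holds. The hypotheses \( a>\log(b)=\log(B) \) and \( \lambda\ge b=B \) are exactly those required there. The initial datum and the solution \( u \) are the same objects. Hence the only thing left to check is that the barrier \eqref{definition_varphi_model_trees} coincides with \eqref{definition_varphi_homogeneous_trees}; the largeness condition then transfers verbatim.

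To identify the normalising constant, use \( \prod_{k=0}^{r-1} b(k)=b^r \). Since \( a>\log b \) gives \( q:=b e^{-a}<1 \), we compute
\[
1+\sum_{r=1}^{+\infty} b^r e^{-ar}=\sum_{r=0}^{+\infty} q^r=\frac{1}{1-b e^{-a}}.
\]
Therefore \( C=1-b e^{-a} \), and \( \varphi(x)=(1-b e^{-a})e^{-a r(x)} \), which is precisely \eqref{definition_varphi_homogeneous_trees}.

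With this identification, the assumption \( \sum_{x\in\mathbb{T}_b}u_0(x)\varphi(x)>s_0(\lambda) \) is the hypothesis of \cref{theorem_blow_up_model_trees}, which yields finite-time blow-up. There is no real obstacle here. The only point deserving care is that \cref{theorem_blow_up_model_trees}, as stated, does not require \eqref{assumption_proposition_blow_up_chapter_3} explicitly, so whatever is used there (presumably verified via the remark on that hypothesis, e.g. boundedness of \( u_t \) on \( (t_0,T') \) on bounded-degree graphs together with \( \varphi\in\ell^1 \)) applies equally in the homogeneous case, since \( \deg\le b+1 \) is bounded.
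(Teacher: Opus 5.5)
Your proposal is correct and matches the paper's proof: you specialize \cref{theorem_blow_up_model_trees} to constant branching $b(r)\equiv b$, so that $B=b$, and you identify the normalising constant through the geometric series $\sum_{r\ge 0}(b e^{-a})^r=(1-b e^{-a})^{-1}$. Your remark on the integrability hypothesis is also consistent with the paper. There it is discharged inside the proof of \cref{theorem_blow_up_model_trees}, via \cref{remark_proposition_blow_up_chapter_3} and the bounded degree of the tree.
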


\subsection{Blow-up on the integer lattice}
Finally, again by applying  Theorem \ref{theorem_blow_up_chapter_3}, we establish the following blow-up result on $\mathbb Z^N$, involving the theta function \( \theta : (0,+\infty) \to (0,+\infty) \) defined as
\begin{equation}\label{theta_function}
\theta(s) := \sum_{m \in \mathbb{Z}} e^{- \pi \hspace{0.05em} m^2 \hspace{0.05em} s} \hspace{2.5em} \text{for all } \hspace{0.1em} s > 0.
\end{equation}
We note that, in \cite{PZ2}, further results on $\mathbb Z^N$ will be derived as consequences of this theorem.
\begin{theorem}\label{theorem_blow_up_lattice_1}
Let  \emph{\cref{hypf}} be fulfilled.
Consider a function \( u_0 \) satisfying \eqref{eq:HP_ID_G}, with \( u_0 \not \equiv 0 \). Let \( u \) solve problem \eqref{problem_HR_G} with $X=\mathbb Z^N$.
If, for some $k>0$ and $\lambda \geq 1 - e^{-k}$,
\begin{equation}\label{e20f}
\frac{1}{2N} \left[ \theta \left( \frac{k}{\pi} \right) \right]^{-N} \sum_{x \in \mathbb{Z}^N} e^{-k \hspace{0.05em} |x|^2} \, u_0(x) \, \mu(x) > s_0(\lambda),
\end{equation}
then \( u \) blows up in finite time.

In particular, if $f(u)=u^p\; \, (p>1)$, instead of \eqref{e20f}, we can assume that, for some $k>0,$
\begin{equation}\label{condition_blow_up_G_lemma_1}
\sum_{x \in \mathbb{Z}^N} e^{-k \hspace{0.05em} |x|^2} \hspace{0.1em} u_0(x) \, \mu(x) >
(2N)^{\frac{p}{p-1}} \left[ \theta \left( \frac{k}{\pi} \right) \right]^N k^{\frac{1}{p-1}}.
\end{equation}

\end{theorem}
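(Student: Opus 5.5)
The plan is to apply \cref{theorem_blow_up_chapter_3} on the lattice $(\mathbb{Z}^N,\omega_0,\mu)$, equipped with the Euclidean distance $d$ of \cref{definition_euclidean_distance_lattice}, using the Gaussian barrier
\[
\varphi(x) := \frac{1}{2N}\Bigl[\theta\Bigl(\frac{k}{\pi}\Bigr)\Bigr]^{-N} e^{-k|x|^2}, \qquad x\in\mathbb{Z}^N .
\]
With this choice, the left-hand side of \eqref{e20f} is exactly $\sum_x u_0\varphi\mu$. So it suffices to verify \cref{assumption_graph_pseudo_metric}, property ($\mathcal{B}_G$) with $\lambda\ge 1-e^{-k}$, and \eqref{assumption_proposition_blow_up_chapter_3}.

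\emph{Step 1: hypothesis (PM).} By \cref{lemma_characterization_neighbors_lattice}, neighbours satisfy $d(x,y)=1$, so the jump size is $s=1$. Euclidean balls in $\mathbb{Z}^N$ are finite. Moreover
\[
\frac{1}{2N}\sum_{y\sim x} d(x,y) = \frac{2N}{2N} = 1,
\]
so $d$ is $1$--intrinsic with $C_0=1$.

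\emph{Step 2: conditions a), b), c) of ($\mathcal{B}_G$).}
\begin{itemize}
\item[a)] Positivity of $\varphi$ is clear.
\item[b)] The Gaussian factorises over coordinates, and $\sum_{m\in\mathbb{Z}}e^{-km^2}=\theta(k/\pi)$. Hence
\[
\sum_x e^{-k|x|^2}\mu(x) = 2N\,\theta(k/\pi)^N,
\]
which gives $\|\varphi\|_1=1$.
\item[c)] I bound the annular sum by the full sum:
\[
\sum_{x,y}\omega_0(x,y)\,|\nabla_{xy}\varphi| \le \sum_x\sum_{y\sim x}\bigl(\varphi(x)+\varphi(y)\bigr) = 2\cdot 2N\sum_x\varphi(x) = 2\|\varphi\|_1 = 2.
\]
This is finite and independent of $R$ and $\delta$, so $C_1=2$ works.
\end{itemize}

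\emph{Step 3: condition d) of ($\mathcal{B}_G$).} This is the key computation, though I expect it to be short. For each coordinate direction $e_i$ we have
\[
\frac{\varphi(x\pm e_i)}{\varphi(x)} = e^{-k}e^{\mp 2kx_i}.
\]
Since $\cosh\ge 1$,
\[
\varphi(x+e_i)+\varphi(x-e_i)-2\varphi(x) = \varphi(x)\bigl(2e^{-k}\cosh(2kx_i)-2\bigr) \ge 2\bigl(e^{-k}-1\bigr)\varphi(x).
\]
Summing over $i$ and dividing by $\mu=2N$ gives
\[
\Delta\varphi \ge -(1-e^{-k})\varphi .
\]
Therefore $\Delta\varphi+\lambda\varphi\ge0$ for every $\lambda\ge1-e^{-k}$.

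\emph{Step 4: condition \eqref{assumption_proposition_blow_up_chapter_3}.} Fix $0<t_0<T'<T$. By \eqref{reg_sol_2_G}, $M:=\sup_{X\times[0,T']}u<\infty$. The equation gives $u_t=\Delta u+f(u)$, with $|\Delta u|\le 2M$ and $0\le f(u)\le \max_{[0,M]}f$ by continuity of $f$. So $\sup_{t\in(t_0,T')}|u_t|$ is bounded on $X$, and since $\varphi\in\ell^1$ the product lies in $\ell^1(X,\mu)$. \Cref{theorem_blow_up_chapter_3} now yields blow-up under \eqref{e20f}.

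\emph{Step 5: the case $f(u)=u^p$.} Take $\lambda=1-e^{-k}$. Then $s_0(\lambda)=(1-e^{-k})^{1/(p-1)}\le k^{1/(p-1)}$. Dividing \eqref{condition_blow_up_G_lemma_1} by $2N\,\theta(k/\pi)^N$, its left-hand side exceeds
\[
(2N)^{\frac{1}{p-1}}k^{\frac{1}{p-1}} \ge k^{\frac{1}{p-1}} \ge s_0(\lambda).
\]
So \eqref{condition_blow_up_G_lemma_1} implies \eqref{e20f}, and the conclusion follows from the first part.

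The only genuinely nontrivial step is the subsolution-type inequality of Step 3. It hinges on the elementary bound $\cosh\ge1$ applied coordinatewise, which is why the lattice structure (neighbours differ in exactly one coordinate) makes the Gaussian barrier work.
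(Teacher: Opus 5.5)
Your proposal is correct and follows essentially the same route as the paper: verify (PM) for the Euclidean distance, check that the normalized Gaussian satisfies $(\mathcal{B}_G)$ via the coordinatewise bound $\cosh\ge 1$, obtain \eqref{assumption_proposition_blow_up_chapter_3} from the bounded weighted degree, and invoke \cref{theorem_blow_up_chapter_3}. The only difference is in the power case. The paper fixes $\lambda=2kN$ (admissible since $2kN>1-e^{-k}$), so that \eqref{condition_blow_up_G_lemma_1} is exactly \eqref{e20f}. You instead keep $\lambda=1-e^{-k}$ and use the same elementary inequality to show that \eqref{condition_blow_up_G_lemma_1} implies \eqref{e20f}, which incidentally shows that the smaller threshold $2N\,\theta(k/\pi)^N(1-e^{-k})^{1/(p-1)}$ already suffices.
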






\section{Proof of \cref{theorem_blow_up_chapter_3}}\label{subsection_proofs_1}

\noindent We first set the notation for the indicator function: given any set \( A \subseteq X \), we define \( \mathds{1}_A : X \to \{0,1\} \) as
\[
\mathds{1}_A(x) :=
\begin{cases}
1 & \text{ if } x \in A \\
0 & \text{ if } x \notin A.
\end{cases}
\]

\noindent Moreover, for any \( f \in C(X) \) we denote its \emph{positive part} by \( [f]_+ \in C(X) \), defined as
\[
[f]_+(x) := \max \left\{ f(x),0 \right\} \hspace{1.6em} \text{for all } x \in X.
\]

\noindent We shall establish some basic properties about \emph{cut-off functions}.


\begin{lemma}\label{lemma_COF_G}
Consider a weighted graph \( (X, \omega, \mu) \) and a pseudo metric \( d \) on \( X \) with jump size \( s \), and suppose that \emph{\cref{assumption_graph_pseudo_metric}} is satisfied. Let \( x_0 \in X \) be a fixed reference node, and denote with \( B_r \) the ball with radius \( r > 0 \) centred at \( x_0 \). Consider the cut-off function
\begin{equation}\label{COF_G}
\chi_R(x) := \min \left\{ \frac{\left[ R - s - d(x,x_0) \right]_+}{\delta R}, 1 \right\} \hspace{1.6em} \text{for all } x \in X,
\end{equation}
\noindent where \( R > 0, \delta \in (0,1) \). Then the following properties hold:

\begin{enumerate}
\renewcommand{\labelenumi}{\alph{enumi})}
    \item \( 0 \leq \chi_R(x) \leq 1 \), for all \( x \in X \);
    \item \( \chi_R \) has finite support, namely \( \chi_R \in C_c(X) \);
    \item for all fixed \( x \in X \), it holds \( \lim \limits_{R \to +\infty} \chi_R(x) = 1 \);
    \item \( \chi_R \) satisfies the following bound for the difference operator:
    \begin{equation}\label{estimate_nabla_chi_R_G}
    |\nabla_{xy} \chi_R| \leq \frac{d(x,y)}{\delta R} \hspace{0.2em} \mathds{1}_{\overline{B}_R \setminus B_{(1-\delta)R - 2s}}(x) \hspace{1.6em} \text{for all } x,y \in X \text{ such that } x \sim y;
    \end{equation}
    \item \( \chi_R \) satisfies the following bound for the Laplacian:
    \begin{equation}\label{estimate_Laplacian_chi_R_G}
    |\Delta \chi_R(x)| \leq \frac{C_0}{\delta R} \hspace{0.2em} \mathds{1}_{\overline{B}_R \setminus B_{(1-\delta)R - 2s}}(x) \hspace{1.6em} \text{for all } x \in X.
    \end{equation}
\end{enumerate}
\end{lemma}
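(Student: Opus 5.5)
Throughout, write $g(x):=R-s-d(x,x_0)$, so that $\chi_R=\min\{[g]_+/(\delta R),1\}$. Equivalently, $\chi_R=\psi(g)$ with $\psi(t):=\min\{t_+/(\delta R),1\}$. The map $\psi$ is nondecreasing, takes values in $[0,1]$, and is Lipschitz with constant $1/(\delta R)$. Moreover, $\psi$ is constant ($=0$) for $t\le 0$ and constant ($=1$) for $t\ge \delta R$. Everything follows from this representation together with property (PM) and the triangle inequality.

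Properties a)--c) are immediate. Part a) holds because $\psi$ takes values in $[0,1]$. For b), $\chi_R(x)\neq0$ forces $g(x)>0$, i.e.\ $d(x,x_0)<R-s$, so $\operatorname{supp}\chi_R\subseteq B_{R-s}(x_0)\subseteq B_R(x_0)$. This ball is finite by (PM) b), or empty if $R\le s$. For c), fix $x$. Once $R$ is so large that $(1-\delta)R\ge s+d(x,x_0)$, we have $g(x)\ge \delta R$ and hence $\chi_R(x)=1$.

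For d), let $x\sim y$. By the Lipschitz property and the triangle inequality,
\[
|\nabla_{xy}\chi_R|\le \frac{|g(y)-g(x)|}{\delta R}=\frac{|d(y,x_0)-d(x,x_0)|}{\delta R}\le \frac{d(x,y)}{\delta R}.
\]
It remains to show that $\nabla_{xy}\chi_R=0$ whenever $x\notin \overline B_R\setminus B_{(1-\delta)R-2s}$. Since $d(x,y)\le s$, the values $g(x)$ and $g(y)$ differ by at most $s$. There are two cases.
\begin{itemize}
\item If $d(x,x_0)>R$, then $g(x)<-s$ and $g(y)<0$. Both values of $\chi_R$ then vanish.
\item If $d(x,x_0)<(1-\delta)R-2s$, then $g(x)>\delta R+s$ and $g(y)>\delta R$. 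Both values of $\chi_R$ then equal $1$.
\end{itemize}
This gives \eqref{estimate_nabla_chi_R_G}. The margin $2s$ in the inner radius is more than enough; one $s$ would suffice.

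For e), write $\Delta\chi_R(x)=\frac1{\mu(x)}\sum_{y}\omega(x,y)\nabla_{xy}\chi_R$. Only neighbours $y\sim x$ contribute, so d) applies term by term and yields
\[
|\Delta\chi_R(x)|\le \frac{\mathds 1_{\overline B_R\setminus B_{(1-\delta)R-2s}}(x)}{\delta R}\cdot\frac1{\mu(x)}\sum_y\omega(x,y)\,d(x,y).
\]
By (PM) c), that is, 1-intrinsic with bound $C_0$, the last factor is at most $C_0$. This proves \eqref{estimate_Laplacian_chi_R_G}.

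The only delicate point is the support bookkeeping in d): checking in both cases that the neighbour $y$ lies in the same constant region of $\psi$ as $x$, using $d(x,y)\le s$. The remaining steps are direct.
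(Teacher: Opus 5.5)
Your proof is correct and is essentially the paper's argument: you use the same representation of $\chi_R$ as a nonexpansive truncation of $\bigl(R-s-d(\cdot,x_0)\bigr)/(\delta R)$, the same three-way case split in d), and the same 1-intrinsic bound in e). Your side remark that one $s$ would suffice in the inner radius is wrong, though harmless since the proof never uses it: $\chi_R=1$ exactly on $\{d(\cdot,x_0)\le(1-\delta)R-s\}$, so for instance on $\mathbb{Z}$ with $x_0=0$, $R=10$, $\delta=0.45$ the vertex $x=4$ lies in $B_{(1-\delta)R-s}$ yet $\chi_R(4)=1\neq\chi_R(5)=8/9$, and your own bound $g(x)>\delta R+s$ already needs the full $2s$.
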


\noindent \emph{Proof.} Let \( R > 0, \delta \in (0,1) \) and consider the function \( \chi_R \) defined in \eqref{COF_G}.

\medskip
\noindent a) By the definition of the positive part of a function in \( C(X) \), it follows that \( \chi_R \) is given as the minimum of two nonnegative functions. Therefore, we immediately deduce that \( \chi_R \geq 0 \) in \( X \). Furthermore, the bound \( \chi_R \leq 1 \) in \( X \) is a direct consequence of \eqref{COF_G}.

\medskip
\noindent b) Since \( \chi_R \) maps \( X \) into the compact interval \( [0,1] \), then obviously \( \chi_R \in C(X) \). To prove that \( \chi_R \in C_c(X) \), we distinguish between two cases, corresponding to different ranges of \( R \).

\medskip
\noindent \textbf{(I)} Assuming \( 0 < R \leq s \), we have \( (1 - \delta)R - s < R - s \leq 0 \), yielding \( R - s - d(\cdot,x_0) \leq 0 \) in \( X \). Consequently, it holds \( \chi_R = \min \left\{ 0,1 \right\} \equiv 0 \), so that
\[
\operatorname{supp}(\chi_R) = \left\{ x \in X : \chi_R(x) \neq 0 \right\} = \emptyset,
\]
which is, of course, a finite set.

\medskip
\noindent \textbf{(II)} Consider now the case in which \( R > s \). After fixing an arbitrary vertex \( x \in X \), two possibilities arise. Specifically, if \( d(x,x_0) \geq R - s \), we argue as in \textbf{(I)} and conclude that \( \chi_R(x) = \min \left\{ 0,1 \right\} = 0 \). Conversely, if \( d(x,x_0) < R - s \), then \eqref{COF_G} implies that \( \chi_R(x) > 0 \), since it is given by the minimum of two positive quantities. From these considerations, we deduce:
\[
\operatorname{supp}(\chi_R) = \left\{ x \in X : \chi_R(x) \neq 0 \right\} = \left\{ x \in X : d(x,x_0) < R - s \right\} = B_{R - s},
\]
\noindent and the ball of radius \( R - s > 0 \) centered at \( x_0 \) is finite by \cref{assumption_graph_pseudo_metric}.

\medskip
\noindent By combining the results in \textbf{(I)} and \textbf{(II)}, we conclude that \( \chi_R \) has finite support for any value of \( R > 0 \).

\medskip
\noindent c) Let us consider an arbitrarily fixed vertex \( x \in X \), and set
\[
R_0 := \frac{d(x,x_0) + s}{1 - \delta} > 0.
\]
\noindent If we choose \( R > R_0 \), then we obtain \( 0 \leq d(x,x_0) < (1 - \delta)R - s < R - s \). Hence we compute
\[
\chi_R(x) = \min \left\{ \frac{\left[ R - s - d(x,x_0) \right]_+}{\delta R}, 1 \right\} = \min \left\{ \frac{R - s - d(x,x_0)}{\delta R}, 1 \right\} = 1.
\]
\noindent Therefore, we have \( \chi_R(x) = 1 \) for all \( R > R_0 \), and the desired limit follows.

\medskip
\noindent d) Let us fix two vertices \( x, y \in X \) such that \( x \sim y \). In order to prove the desired inequality, we distinguish three possible cases based on the value of \( d(x, x_0) \in [0, +\infty) \).

\medskip
\noindent \textbf{(A)} First, assume \( x \in B_{(1-\delta)R - 2s} \), so that \( d(x,x_0) < (1 - \delta)R - 2s < (1 - \delta)R - s \). In particular, this implies that
\[
R - s - d(x,x_0) > \delta R > 0,
\]
\noindent and therefore, arguing as in c), we conclude \( \chi_R(x) = 1 \). Moreover, using the triangular inequality and the definition of the jump size \( s \in (0,+\infty) \), we deduce
\[
d(y,x_0) \leq d(x,y) + d(x,x_0) \leq s + d(x,x_0) < s + (1 - \delta)R - 2s = (1 - \delta)R - s.
\]
\noindent Again, by reasoning as in c) we conclude that also \( \chi_R(y) = 1 \), and therefore \( |\nabla_{xy} \chi_R| = 0 \).

\medskip
\noindent \textbf{(B)} Now, suppose that \( x \in X \setminus \overline{B}_R \). In this case we have \( d(x,x_0) > R > R - s \), which immediately yields
\[
\chi_R(x) = \min \left\{ \frac{\left[ R - s - d(x,x_0) \right]_+}{\delta R}, 1 \right\} = \min \left\{ 0, 1 \right\} = 0.
\]
\noindent Moreover, by applying the triangular inequality and recalling the definition of the jump size, we obtain
\[
R < d(x,x_0) \leq d(x,y) + d(y,x_0) \leq s + d(y,x_0),
\]
\noindent hence \( d(y,x_0) > R - s \). Using the same argument as above, we deduce that \( \chi_R(y) = 0 \) as well. Therefore, it holds \( |\nabla_{xy} \chi_R| = 0 \).

\medskip
\noindent \textbf{(C)} Finally, assume \( x \in \overline{B}_R \setminus B_{(1-\delta)R - 2s} \), that is, \( (1-\delta)R - 2s \leq d(x,x_0) \leq R \). We want to show that the following inequality is satisfied:
\begin{equation}\label{case_C_3_G}
|\nabla_{xy} \chi_R| \leq \frac{d(x,y)}{\delta R}.
\end{equation}
\noindent Notice, from \eqref{COF_G}, that \( \chi_R \) can be expressed as the composition of two functions. In particular, after introducing \( h : \mathbb{R} \to [0,1] \) and \( g : X \to \mathbb{R} \), defined as
\[
\begin{split}
\begin{aligned}
h(z) &:= \min \left\{ \left[ z \right]_+, 1 \right\} \hspace{2.78em} \text{for all } z \in \mathbb{R}, \\
g(v) &:= \frac{R - s - d(v,x_0)}{\delta R} \hspace{1.6em} \text{for all } v \in X,
\end{aligned}
\end{split}
\]
\noindent it can be easily seen that \( \chi_R = h \circ g \), namely:
\begin{equation}\label{composition_chi_R_G}
\chi_R(v) = h(g(v)) \hspace{1.6em} \text{for all } v \in X.
\end{equation}
\noindent In the definition of \( h \) given above, the notation \( [z]_+ = \max \{ z, 0 \} \) is used to denote the positive part of a real number \( z \).

\medskip
\noindent We now prove that the function \( h \) is \emph{nonexpansive}, meaning that it satisfies the following inequality:
\begin{equation}\label{non_expansive_1}
|h(b) - h(a)| \leq |b-a| \hspace{1.6em} \text{for all } a,b \in \mathbb{R}.
\end{equation}

\noindent To this end, we may assume without loss of generality that \( a \leq b \) and aim to prove that
\begin{equation}\label{non_expansive_2}
h(b) - h(a) \leq b-a,
\end{equation}
\noindent which is clearly equivalent to \eqref{non_expansive_1}, since \( h \) is nondecreasing on \( \mathbb{R} \). The function \( h \), indeed, can be explicitly expressed as
\[
h(z) =
\begin{cases}
0 & \hspace{1.0em} \text{if } z \leq 0, \\
z & \hspace{1.0em} \text{if } 0 < z < 1, \\
1 & \hspace{1.0em} \text{if } z \geq 1.
\end{cases}
\]
\noindent Proving \eqref{non_expansive_2} under the assumption \( a \leq b \) will thus suffice: once this is done, the general inequality \eqref{non_expansive_1} can be concluded by symmetry, switching the roles of \( a \) and \( b \).

\medskip
\noindent Now, if both \( a \) and \( b \) belong to one of the two regions where \( h \) is constant, namely either \( (-\infty, 0] \) or \( [1,+\infty) \), then \( h(a) = h(b) \), and \eqref{non_expansive_2} clearly holds. In the case where both \( a \) and \( b \) lie in the interval \( (0,1) \), we have \( h(a) = a \) and \( h(b) = b \), so again the desired inequality becomes an identity.

\medskip
\noindent Suppose now that \( a \leq 0 < b \). In this situation we have \( h(a) = 0 \) and \( h(b) = \min\{b,1\} \leq b \), hence:
\[
h(b) - h(a) = h(b) \leq b \leq b - a,
\]
\noindent yielding \eqref{non_expansive_2}. Similarly, in the case \( 0 < a < 1 \leq b \), we compute \( h(a) = a \) and \( h(b) = 1 \), so that
\[
h(b) - h(a) = 1 - a \leq b - a,
\]
\noindent and \eqref{non_expansive_2} holds once again.

\medskip
\noindent All the possible configurations satisfying \( a \leq b \) have thus been checked, and the validity of \eqref{non_expansive_2} has been established in each case. As discussed above, this implies \eqref{non_expansive_1}, confirming that \( h \) is a nonexpansive map.

\medskip
\noindent Therefore, by combining \eqref{composition_chi_R_G}, \eqref{non_expansive_1}, and the definition of \( g \), we obtain:
\[
\begin{aligned}
|\nabla_{xy} \chi_R| &= |\chi_R(y) - \chi_R(x)| \\
&= |h(g(y)) - h(g(x))| \\
&\leq |g(y) - g(x)| \\
&= \left| \frac{R - s - d(y,x_0)}{\delta R} - \frac{R - s - d(x,x_0)}{\delta R} \right| \\
&= \frac{|d(x,x_0) - d(y,x_0)|}{\delta R} \\
&\leq \frac{d(x,y)}{\delta R},
\end{aligned}
\]
\noindent where in the last passage we made use of the triangular inequality. Indeed, by exploiting the facts that \( d(x,x_0) \leq d(x,y) + d(y,x_0) \) and \( d(y,x_0) \leq d(x,y) + d(x,x_0) \) we get:
\[
|d(x,x_0) - d(y,x_0)| \leq d(x,y).
\]
\noindent We have then proved the validity of \eqref{case_C_3_G}.

\medskip
\noindent By collecting the results established in \textbf{(A)}, \textbf{(B)}, and \textbf{(C)}, we see that it holds:
\[
|\nabla_{xy} \chi_R| \leq \frac{d(x,y)}{\delta R} \hspace{0.2em} \mathds{1}_{\overline{B}_R \setminus B_{(1-\delta)R - 2s}}(x).
\]
\noindent Since the choice of the neighboring vertices \( x,y \in X \) was arbitrary, the thesis follows.

\medskip
\noindent e) Let us fix a vertex \( x \in X \) and distinguish two cases depending on the value of \( d(x,x_0) \). First, assume that \( x \in X \setminus \left( \overline{B}_R \setminus B_{(1-\delta)R - 2s} \right) \), meaning that either \( d(x,x_0) < (1 - \delta)R - 2s \) or \( d(x,x_0) > R \). In this case, estimate \eqref{estimate_Laplacian_chi_R_G} simply reduces to \( \Delta \chi_R(x) = 0 \). This follows immediately from \eqref{estimate_nabla_chi_R_G}, which in this scenario reads \( \nabla_{xy} \chi_R = 0 \) for all \( y \sim x \), and therefore
\[
\Delta \chi_R(x) = \frac{1}{\mu(x)} \sum_{y \in X} \omega(x,y) \left[ \nabla_{xy} \chi_R \right] = \frac{1}{\mu(x)} \sum_{\substack{y \in X \\ y \sim x}} \omega(x,y) \left[ \nabla_{xy} \chi_R \right] = 0.
\]
\noindent This proves the validity of \eqref{estimate_Laplacian_chi_R_G}.

\medskip
\noindent Now, suppose that \( x \in \overline{B}_R \setminus B_{(1-\delta)R - 2s} \), namely \( (1 - \delta)R - 2s \leq d(x,x_0) \leq R \). Under this assumption, estimate \eqref{estimate_nabla_chi_R_G} takes the form of \eqref{case_C_3_G}. Among the hypotheses listed in \cref{assumption_graph_pseudo_metric}, we recall that the pseudo metric \( d \) is 1--intrinsic with bound \( C_0 \), which ensures the validity of the following inequality for some constant \( C_0 > 0 \):
\[
\frac{1}{\mu(x)} \sum_{y \in X} \omega(x,y) \hspace{0.1em} d(x,y) \leq C_0.
\]
\noindent We can now exploit this inequality, together with \eqref{case_C_3_G}, and recall that \( \omega(x,y) > 0 \) if and only if \( x \sim y \), in order to obtain:
\[
\begin{aligned}
|\Delta \chi_R(x)| &= \left| \frac{1}{\mu(x)} \sum_{y \in X} \omega(x,y) \left[ \nabla_{xy} \chi_R \right] \right| \\
&\leq \frac{1}{\mu(x)} \sum_{y \in X} \omega(x,y) \left| \nabla_{xy} \chi_R \right| \\
&\leq \frac{1}{\delta R \hspace{0.15em} \mu(x)} \sum_{y \in X} \omega(x,y) \hspace{0.1em} d(x,y) \\
&\leq \frac{C_0}{\delta R},
\end{aligned}
\]
\noindent which proves \eqref{estimate_Laplacian_chi_R_G} in this scenario.

\medskip
\noindent In conclusion, \eqref{estimate_Laplacian_chi_R_G} holds for any arbitrary vertex \( x \in X \), and the thesis follows. \hfill \( \square \)

\begin{remark}
\noindent In relation to the previous proof, observe that once \( \delta \in (0,1) \) is fixed, if the radius \( R \) satisfies
\[
0 < R \leq \frac{2s}{1 - \delta},
\]
\noindent then it follows that \( (1 - \delta)R - 2s \leq 0 \), implying \( B_{(1 - \delta)R - 2s} = \emptyset \). The validity of the proof remains unaffected; however, in the argumentation of points d) and e), one must disregard those sub-cases in which a node is assumed to belong to the ball \( B_{(1 - \delta)R - 2s} \), or alternatively, replace the set \( \overline{B}_R \setminus B_{(1 - \delta)R - 2s} \) by \( \overline{B}_R \), whenever it appears.
\end{remark}

\begin{lemma}\label{lemmaode}
Let  \emph{\cref{hypf}} be fulfilled.
Fix $\lambda>0$ and let $y=y(t)$ satisfy
\begin{equation}\label{eq:ineq}
y'(t)+\lambda y(t)\ge f(y(t)),
\qquad y(0)=y_{0}>0.
\end{equation}
Let $s_0(\lambda)$ be defined by \eqref{defa}. If $$y_{0}>s_0(\lambda),$$
then $y$ blows up in finite time. In particular, there exists
$T^{*}\in(0,\infty)$ such that
$$\lim_{t\to T^{*}}y(t)=+\infty.$$
Moreover, with
\[
c:=1-\frac{\lambda y_{0}}{f(y_{0})}\in(0,1),
\]
one has the explicit upper bound
\begin{equation}\label{eq:time-bound}
T^{*}\le \frac{1}{c}\int_{y_{0}}^{\infty}\frac{du}{f(u)}<\infty .
\end{equation}
In particular, if $f(u)=u^{p}$ with $p>1$, then
$$s_0(\lambda)=\lambda^{1/(p-1)},$$ and
$$T^{*}\le \frac 1{(p-1)(y_{0}^{p-1}-\lambda)}.$$
\end{lemma}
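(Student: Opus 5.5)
The plan is to compare $y$ with the autonomous flow $z'=f(z)-\lambda z$, working throughout with the function $g(u):=f(u)-\lambda u$. First we show that $y_0>s_0(\lambda)$ forces $f(y_0)>\lambda y_0$, so that $c\in(0,1)$. Since $f$ is convex with $f(0)=0$, the quotient $u\mapsto f(u)/u$ is nondecreasing on $(0,\infty)$. Hence the set $\{u>0:\ f(u)/u>\lambda\}$ is an up-ray with infimum $s_0(\lambda)$, and every $u>s_0(\lambda)$ lies strictly inside it. This gives $f(y_0)/y_0>\lambda$, so $g(y_0)>0$ and $c=g(y_0)/f(y_0)\in(0,1)$ (using $f(y_0)>0$).

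Next we prove that $y$ stays above $y_0$ and grows. For $u\ge y_0$, monotonicity of $f(u)/u$ gives
\[
\frac{\lambda u}{f(u)}\le\frac{\lambda y_0}{f(y_0)}=1-c,
\qquad\text{hence}\qquad
g(u)\ge c\,f(u)>0 .
\]
From $y'\ge g(y)$ and $g(y_0)>0$ we get $y'(0)>0$. A continuity argument then shows $y(t)>y_0$ for $t>0$ on the existence interval: at a first time $t_1>0$ with $y(t_1)=y_0$ we would have $y'(t_1)\ge g(y_0)>0$, which contradicts approaching $y_0$ from above. Therefore $y'\ge c f(y)>0$ on the whole interval.

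Dividing by $f(y)$ and integrating gives
\[
c\,t\le\int_0^t\frac{y'(\tau)}{f(y(\tau))}\,d\tau=\int_{y_0}^{y(t)}\frac{du}{f(u)}\le\int_{y_0}^\infty\frac{du}{f(u)}<\infty,
\]
where the last integral is finite because $f>0$ is continuous on $[y_0,1]$ and \eqref{eq:osgood} holds. Hence $y$ cannot exist beyond $T_c:=\frac1c\int_{y_0}^\infty du/f(u)$. Since $y$ is increasing, its maximal time $T^*\le T_c$ must come with $y\to+\infty$: if $y$ stayed bounded on a finite interval, the usual continuation property would extend it. This yields \eqref{eq:time-bound}. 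We read the statement as concerning a solution defined on its maximal interval, with blow-up meaning that this interval is finite; the integral bound shows that no solution can live past $T_c$.

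For $f(u)=u^p$ we have $f(u)/u=u^{p-1}$, so $s_0(\lambda)=\lambda^{1/(p-1)}$. Also $c=1-\lambda y_0^{1-p}=(y_0^{p-1}-\lambda)/y_0^{p-1}$ and $\int_{y_0}^\infty u^{-p}\,du=y_0^{1-p}/(p-1)$. Multiplying these gives $1/\bigl((p-1)(y_0^{p-1}-\lambda)\bigr)$. The main subtlety is the monotonicity of $f(u)/u$ from convexity, which follows from $f(\theta u)\le\theta f(u)$ for $\theta\in[0,1]$; the rest is routine.
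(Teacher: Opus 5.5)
Your proof is correct and follows essentially the same route as the paper. Both use the monotonicity of $f(u)/u$ to get $g(u)\ge c\,f(u)>0$ for $u\ge y_0$, a first-crossing argument to keep $y\ge y_0$, integration of the differential inequality, and the same continuation argument for $y\to+\infty$. The only cosmetic difference is that you integrate $y'/f(y)$ directly from $y'\ge c\,f(y)$, while the paper integrates $y'/g(y)$, obtaining the slightly sharper intermediate bound $T^*\le\int_{y_0}^\infty du/g(u)$, and only then applies $g\ge c\,f$.
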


\noindent \emph{Proof.} The conclusion follows from a straightforward qualitative analysis of an ordinary differential equation, which we briefly outline for the reader’s convenience.
Set $g(u):=f(u)-\lambda u$.
Since $f$ is convex and $f(0)=0$, the map $u\mapsto \frac{f(u)}{u}$ is
nondecreasing on $(0,\infty)$.
Hence $y_{0}>s_0(\lambda)$ implies $\frac{f(y_{0})}{y_{0}}>\lambda$, i.e.
$g(y_{0})>0$, and for every $u\ge y_{0}$,
\[
\frac{f(u)}{u}\ge \frac{f(y_{0})}{y_{0}}
\quad\Rightarrow\quad
\frac{\lambda u}{f(u)}\le \frac{\lambda y_{0}}{f(y_{0})}=1-c .
\]
Therefore, for all $u\ge y_{0}$,
\[
g(u)=f(u)\Bigl(1-\frac{\lambda u}{f(u)}\Bigr)\ge c\,f(u)>0 .
\]
From \eqref{eq:ineq} we have $y'(t)\ge g(y(t))$.
Define $\Phi(s):=\int_{y_{0}}^{s}\! \frac{d\xi}{g(\xi)}$ for $s\ge y_{0}$.
Then $\Phi$ is increasing and, by the chain rule, we obtain:
\[
\frac{d}{dt}\Phi(y(t))
=\frac{y'(t)}{g(y(t))}\ge 1,
\]
for all \( t \in (0,T^{*}) \), where \( T^{*} \in (0,+\infty] \) is the maximal time of existence of the function \( y \). Here, the inequality is due to the strict positivity of \( g \). Indeed, from \( y'(t) \ge g(y(t)) \) we deduce that $y'(0) \ge g(y_0) > 0$, and thus $y(t)\ge y_0$ for small times. Now, supposing by contradiction that $y$ drops below $y_0$ at some time, we would infer the existence of a first time $ \tau \in (0,T^{*}) $ such that $y(\tau)=y_0$ and $y(t)\ge y_0$ for all $t\le \tau$. However, in this case it would hold \( y'(\tau) \ge g\big(y(\tau)\big)=g(y_0)>0 \), which contradicts the possibility of decreasing through $y_0$. Therefore, we have \( y(t) \ge y_0 \), and thus $g(y(t))>0$, for every $t\in[0,T^*)$. Now, integrating the previous inequality gives $\Phi(y(t))\ge t$, hence
\[
t \le \int_{y_{0}}^{y(t)} \frac{d\xi}{g(\xi)} \le \int_{y_{0}}^{+\infty} \frac{d\xi}{g(\xi)},
\]
for all $t \in [0, T^{*})$. Letting $t\to T^{*}$ and using $g\ge c f$ yields
\[
T^{*}\le \int_{y_{0}}^{\infty}\frac{du}{g(u)}
\le \frac{1}{c}\int_{y_{0}}^{\infty}\frac{du}{f(u)}<\infty,
\]
where finiteness follows from \eqref{eq:osgood}. We now notice that, if $y$ remained bounded on $[0,T^*)$, then $y(t)$ would stay in a compact interval where $g$ is locally Lipschitz, hence continuous. In this case, the differential inequality would allow us to prolong $y$ beyond $T^*$, contradicting the maximality of $T^*$. Therefore $y(t)$ cannot remain bounded as $t \to T^*$, and we conclude that $y(t)\to+\infty$ as $t\to T^{*}$. Finally, for $f(u)=u^{p}$, $\frac{f(u)}{u}=u^{p-1}$ gives $s_0(\lambda)=\lambda^{1/(p-1)}$, and \eqref{eq:time-bound} gives the stated bound.
\hfill \( \square \)

\bigskip
\noindent \emph{Proof of \emph{\cref{theorem_blow_up_chapter_3}}}. By contradiction, we assume that \( u \) is a global solution to problem \eqref{problem_HR_G}, namely \( T = +\infty \) and \( u(\cdot, t) \in \ell^\infty(X) \) for all \( t > 0 \). Notice that, since by assumption \( u_0 \not \equiv 0 \), then \( u \) cannot coincide with the trivial solution. Moreover, we highlight that \( \varphi u_t(\cdot,t) \in \ell^1(X,\mu) \) for all \( t >  0 \). Indeed, after choosing \( t > 0 \), we can always select \( t_0 \in (0,t) \) and \( T' \in (t,+\infty) \), so that \( 0 < t_0 < t < T' < T = +\infty \). Now, thanks to the assumption \eqref{assumption_proposition_blow_up_chapter_3}, the following estimate holds:
\[
\begin{aligned}
\left| \sum_{x \in X} \varphi(x) \hspace{0.1em} u_t(x,t) \hspace{0.1em} \mu(x) \right| & \leq \sum_{x \in X} \varphi(x) \hspace{0.1em} \left| u_t(x,t) \right| \hspace{0.1em} \mu(x) \\
& \leq \sum_{x \in X} \varphi(x) \hspace{0.1em} \sup \limits_{\tau \in (t_0,T')} |u_t(x,\tau)| \hspace{0.2em} \mu(x) \\
& < +\infty.
\end{aligned}
\]
\noindent We now fix \( t > 0, R > 0, \delta \in (0,1) \) and we multiply the equation in \eqref{problem_HR_G} by \( \chi_R \hspace{0.1em} \varphi \hspace{0.1em} \mu \), where \( \chi_R \) is the cut-off function defined in \eqref{COF_G} as
\[
\chi_R(x) := \min \left\{ \frac{\left[ R - s - d(x,x_0) \right]_+}{\delta R}, 1 \right\} \hspace{1.6em} \text{for all } x \in X.
\]
\noindent This yields:
\begin{equation}\label{first_G}
\begin{aligned}
\sum_{x \in X} \chi_R(x) \, \varphi(x) \, u_t(x,t) \, \mu(x) = & \sum_{x \in X} \chi_R(x) \, \varphi(x) \, \Delta u(x,t) \, \mu(x) \hspace{0.4em} + \\
& + \sum_{x \in X} \chi_R(x) \, \varphi(x) \, f(u(x,t)) \, \mu(x).
\end{aligned}
\end{equation}

\medskip
\noindent Recall from \cref{lemma_COF_G} that \( \chi_R \in C_c(X) \). Moreover, both \( u(\cdot,t) \) and \( \Delta u(\cdot,t) \) belong to \( C(X) \). From \eqref{reg_sol_1_G}, it easily follows that \( u_t(\cdot,t) \in C(X) \) as well. Additionally, the barrier function \( \varphi \) satisfies property (\(\mathcal{B}_G\)), and in particular \( \varphi \in C(X) \). Finally, from \cref{definition_weighted_graph}, we know that \( \mu \in C(X) \). Therefore, all the terms appearing in the sums in \eqref{first_G} belong to \( C_c(X) \), which implies that the sums are on a finite number of terms, hence they are well defined.

\medskip
\noindent We now separately investigate the behavior of the three sums in \eqref{first_G} as \( R \to +\infty \).

\medskip
\noindent \textbf{(I)} By \cref{lemma_COF_G} we know that \( \lim \limits_{R \to +\infty} \chi_R(x) = 1 \), for any fixed \( x \in X \), whence:
\[
\lim_{R \to +\infty} \chi_R(x) \, \varphi(x) \, u_t(x,t) = \varphi(x) \, u_t(x,t) \hspace{1.6em} \text{for all } x \in X.
\]
Moreover, it holds \( \chi_R \leq 1 \) in \( X \), and therefore the following estimate is satisfied for all \( x \in X \) and \( R > 0 \):
\[
|\chi_R(x) \, \varphi(x) \, u_t(x,t)| = \chi_R(x) \, \varphi(x) \, |u_t(x,t)| \leq \varphi(x) \, |u_t(x,t)|,
\]
where we have exploited the fact that both \( \varphi \) and \( \chi_R \) are nonnegative. As already remarked, it holds \( \varphi \hspace{0.1em} u_t(\cdot,t) \in \ell^1(X,\mu) \), hence we can apply the Dominated Convergence Theorem in the context of the measure space \( (X,\mathcal{P}(X),\mu) \), obtaining:
\[
\lim_{R \to +\infty} \sum_{x \in X} \chi_R(x) \, \varphi(x) \, u_t(x,t) \, \mu(x) = \sum_{x \in X} \varphi(x) \, u_t(x,t) \, \mu(x).
\]

\medskip
\noindent \textbf{(II)} By the same arguments used in \textbf{(I)}, we get both
\[
\lim_{R \to +\infty} \chi_R(x) \, \varphi(x) \, f(u(x,t)) = \varphi(x) \, f(u(x,t)) \hspace{1.6em} \text{for all } x \in X
\]
\noindent and
\[
|\chi_R(x) \, \varphi(x) \, f(u(x,t))| \equiv \chi_R(x) \, \varphi(x) \, f(u(x,t)) \leq \varphi(x) \, f(u(x,t)) \leq f(\|u(\cdot,t)\|_\infty) \, \varphi(x),
\]
\noindent for all \( x\in X, R > 0 \). The last inequality holds since the function \( f \) is increasing on \([0,+\infty) \), which is a trivial consequence of the fact that, as already remarked in the proof of \cref{lemmaode}, the map \( u \mapsto \frac{f(u)}{u} \) is nondecreasing on \( (0,+\infty) \). By assumption, \( \varphi \) satisfies property (\(\mathcal{B}_G\)), and in particular \( \varphi \in \ell^1(X,\mu) \). The Dominated Convergence Theorem can then be applied, yielding that \( \varphi f(u(\cdot,t)) \in \ell^1(X,\mu) \) and
\[
\lim_{R \to +\infty} \sum_{x \in X} \chi_R(x) \, \varphi(x) \, f(u(x,t)) \, \mu(x) = \sum_{x \in X} \varphi(x) \, f(u(x,t)) \, \mu(x).
\]

\medskip
\noindent \textbf{(III)} Since \cref{remark_assumptions_G} ensures that, under \cref{assumption_graph_pseudo_metric}, the weighted graph \( (X,\omega,\mu) \) is locally finite, and given that \( u(\cdot, t) \in C(X) \) and \( \chi_R \hspace{0.1em} \varphi \in C_c(X) \), we are allowed to apply formula \eqref{integration_by_parts}, which yields:
\begin{equation}\label{application_IBP_G}
\sum_{x \in X} \chi_R(x) \, \varphi(x) \, \Delta u(x,t) \, \mu(x) = \sum_{x \in X} \Delta \left[ \chi_R \, \varphi \right](x) \, u(x,t) \, \mu(x).
\end{equation}

\noindent Furthermore, by invoking formula \eqref{Laplacian_product_formula}, we obtain for all \( x \in X \):
\[
\Delta \left[ \chi_R \hspace{0.1em} \varphi \right](x) = \chi_R(x) \, \Delta \varphi(x) + \varphi(x) \, \Delta \chi_R(x) + \frac{1}{\mu(x)} \sum_{y \in X} \omega(x,y) \bigl[ \nabla_{xy} \chi_R \bigr] \left[ \nabla_{xy} \varphi \right],
\]
so that \eqref{application_IBP_G} becomes
\[
\begin{aligned}
\sum_{x \in X} \chi_R(x) \, \varphi(x) \, \Delta u(x,t) \, \mu(x) &= \sum_{x \in X} \chi_R(x) \, \Delta \varphi(x) \, u(x,t) \, \mu(x) \hspace{0.4em} + \\
& \hspace{1.1em} + \sum_{x \in X} \Delta \chi_R(x) \, \varphi(x) \, u(x,t) \, \mu(x) \hspace{0.4em} + \\
& \hspace{1.1em} + \sum_{x,y \in X} \omega(x,y) \bigl[ \nabla_{xy} \chi_R \bigr] \left[ \nabla_{xy} \varphi \right] \, u(x,t). \\
\end{aligned}
\]

\noindent Now we make use of property d) from \cref{def_TF_G}, namely the inequality \( \Delta \varphi + \lambda \varphi \geq 0 \), valid in \( X \) for some constant \( \lambda > 0 \). This leads to:
\begin{equation}\label{Laplacian_term_no_limit}
\begin{aligned}
\sum_{x \in X} \chi_R(x) \, \varphi(x) \, \Delta u(x,t) \, \mu(x) &\geq - \lambda \, \sum_{x \in X} \chi_R(x) \, \varphi(x) \, u(x,t) \, \mu(x) \hspace{0.4em} + \\
& \hspace{1.1em} + \sum_{x \in X} \Delta \chi_R(x) \, \varphi(x) \, u(x,t) \, \mu(x) \hspace{0.4em} + \\
& \hspace{1.1em} + \sum_{x,y \in X} \omega(x,y) \bigl[ \nabla_{xy} \chi_R \bigr] \left[ \nabla_{xy} \varphi \right] \, u(x,t). \\
\end{aligned}
\end{equation}
\noindent Again, we separately investigate the behavior of the three sums appearing at the right-hand side of \eqref{Laplacian_term_no_limit}, as \( R \to +\infty \).

\medskip
\noindent \textbf{(A)} Reasoning in the same way as in \textbf{(II)}, we get both
\[
\lim_{R \to +\infty} \chi_R(x) \, \varphi(x) \, u(x,t) = \varphi(x) \, u(x,t) \hspace{1.6em} \text{for all } x \in X
\]
\noindent and
\[
|\chi_R(x) \, \varphi(x) \, u(x,t)| \equiv \chi_R(x) \, \varphi(x) \, u(x,t) \leq \varphi(x) \, u(x,t) \leq \|u(\cdot,t)\|_\infty \, \varphi(x),
\]
\noindent for all \( x \in X, R > 0 \), where we have used the fact that \( u(\cdot,t) \in \ell^\infty(X) \). Since \( \varphi \in \ell^1(X,\mu) \), the Dominated Convergence Theorem allows us to conclude that \( \varphi \hspace{0.1em} u(\cdot,t) \in \ell^1(X,\mu) \) and
\[
\lim_{R \to +\infty} \sum_{x \in X} \chi_R(x) \, \varphi(x) \, u(x,t) \, \mu(x) = \sum_{x \in X} \varphi(x) \, u(x,t) \, \mu(x).
\]

\medskip
\noindent \textbf{(B)} We can write, for any \( R > 0 \):
\[
\begin{aligned}
\left| \sum_{x \in X} \Delta \chi_R(x) \, \varphi(x) \, u(x,t) \, \mu(x) \right| &\leq \sum_{x \in X} \left| \Delta \chi_R(x) \right| \, \varphi(x) \, u(x,t) \, \mu(x) \\
&\leq \|u(\cdot,t)\|_\infty \, \sum_{x \in X} \left| \Delta \chi_R(x) \right| \, \varphi(x) \, \mu(x) \\
&\leq \frac{C_0 \, \|u(\cdot,t)\|_\infty }{\delta R} \, \sum_{x \in \overline{B}_R \setminus B_{(1-\delta)R - 2s}} \varphi(x) \, \mu(x) \\
&\leq \frac{C_0 \, \|u(\cdot,t)\|_\infty }{\delta R} \, \sum_{x \in X} \varphi(x) \, \mu(x) \\
&= \frac{C_0 \, \|u(\cdot,t)\|_\infty }{\delta R}.
\end{aligned}
\]
\noindent In this step, we have first exploited the fact that \( u(\cdot,t) \in \ell^\infty(X) \), followed by the application of \eqref{estimate_Laplacian_chi_R_G}. Finally, the last equality relies on \( \varphi \) having unitary norm in \( \ell^1(X,\mu) \). Since the right-hand side of the resulting inequality vanishes in the limit as \( R \to +\infty \), we conclude:
\[
\lim_{R \to +\infty} \sum_{x \in X} \Delta \chi_R(x) \, \varphi(x) \, u(x,t) \, \mu(x) = 0.
\]

\medskip
\noindent \textbf{(C)} For any \( R > 0 \), the following estimates hold:
\[
\begin{aligned}
\left| \sum_{x,y \in X} \omega(x,y) \bigl[ \nabla_{xy} \chi_R \bigr] \left[ \nabla_{xy} \varphi \right] u(x,t) \right| &\leq
\sum_{x,y \in X} \omega(x,y) \left| \nabla_{xy} \chi_R \right| \, \left| \nabla_{xy} \varphi \right| u(x,t) \\
&\leq \|u(\cdot,t)\|_\infty \, \sum_{x,y \in X} \omega(x,y) \left| \nabla_{xy} \chi_R \right| \, \left| \nabla_{xy} \varphi \right| \\
&\leq \frac{s \, \|u(\cdot,t)\|_\infty}{\delta R} \sum_{\substack{x,y \in X \\ x \in \overline{B}_R \setminus B_{(1-\delta)R - 2s}}} \omega(x,y) \,
\left| \nabla_{xy} \varphi \right| \\
&\leq \frac{C_1 \, s \, \|u(\cdot,t)\|_\infty}{\delta R}.
\end{aligned}
\]
\noindent Indeed, since \( u(\cdot,t) \in \ell^\infty(X) \), the second inequality is true. Moreover, the third one is justified by observing that \( \omega \) is nonzero only on adjacent nodes, together with the application of the estimate
\[
|\nabla_{xy} \chi_R| \leq \frac{s}{\delta R} \, \mathds{1}_{\overline{B}_R \setminus B_{(1-\delta)R - 2s}}(x) \hspace{2.8em} \text{for all } x,y \in X \text{ such that } x \sim y,
\]
\noindent which follows directly from \eqref{estimate_nabla_chi_R_G} and from the definition of the jump size \( s \in (0,+\infty) \). Now, due to property c) in \cref{def_TF_G}, also the last inequality is justified, and the term at the right-hand side of the resulting estimate simply corresponds to a constant multiplied by \( 1/R \). Hence, we conclude:
\[
\lim_{R \to +\infty} \sum_{x,y \in X} \omega(x,y) \bigl[ \nabla_{xy} \chi_R \bigr] \left[ \nabla_{xy} \varphi \right] u(x,t) = 0.
\]

\medskip
\noindent After putting together the results obtained in \textbf{(A)}, \textbf{(B)} and \textbf{(C)}, as \( R \to +\infty \) \eqref{Laplacian_term_no_limit} reads:
\[
\lim_{R \to +\infty} \sum_{x \in X} \chi_R(x) \, \varphi(x) \, \Delta u(x,t) \, \mu(x) \geq - \lambda \sum_{x \in X} \varphi(x) \, u(x,t) \, \mu(x).
\]

\medskip
\noindent Finally, thanks to the result of the computations done in \textbf{(I)}, \textbf{(II)} and \textbf{(III)}, letting \( R \to +\infty \) in \eqref{first_G} yields, for all \( t > 0 \):
\begin{equation}\label{second_G}
\sum_{x \in X} \varphi(x) \, u_t(x,t) \, \mu(x) \geq - \lambda \sum_{x \in X} \varphi(x) \, u(x,t) \, \mu(x) + \sum_{x \in X} \varphi(x) \, f(u(x,t)) \, \mu(x).
\end{equation}

\medskip
\noindent As already noticed, the three series appearing in \eqref{second_G} are convergent for all positive times. This allows us to define the function \( \Phi : [0,+\infty) \to [0,+\infty) \) by
\begin{equation}\label{def_Phi_G}
  \Phi(t) := \sum_{x \in X} \varphi(x) \, u(x,t) \, \mu(x) \hspace{1.6em} \text{for all } t \geq 0,
\end{equation}
\noindent and the initial condition \( u(\cdot,0) = u_0 \) implies
\begin{equation}\label{def_Phi_0_G}
  \Phi(0) = \sum_{x \in X} \varphi(x) \, u_0(x) \, \mu(x) > s_0(\lambda),
\end{equation}
\noindent where the inequality is satisfied by assumption. Observe that the series defining \( \Phi(0) \) also converges: indeed, since \( \varphi \in \ell^1(X,\mu) \) and \( u_0 \in \ell^\infty(X) \) by \eqref{eq:HP_ID_G}, Hölder's inequality ensures that \( \varphi \hspace{0.1em} u_0 \in \ell^1(X,\mu) \).

\medskip
\noindent Now, since the function \( \Phi \) is clearly nonnegative, we have:
\begin{equation}\label{contradiction_1_G}
  0 \leq \Phi(t) < +\infty \hspace{1.8em} \text{for all } t \geq 0.
\end{equation}
\noindent Moreover, \eqref{reg_sol_1_G} implies that, for all \( x \in X \), \( \varphi(x) \hspace{0.1em} u(x,\cdot) \hspace{0.1em} \mu(x) \in C^1((0,+\infty)) \), and therefore, exploiting the assumption \eqref{assumption_proposition_blow_up_chapter_3}, we can apply the theorem of derivation for series, in the context of the measure space \( (X,\mathcal{P}(X),\mu) \). This allows us to conclude that \( \Phi \in C^1((0,+\infty)) \), with
\begin{equation}\label{Phi_dot_G}
\Phi'(t) = \frac{d}{d t} \sum_{x \in X} \varphi(x) \, u(x,t) \, \mu(x) = \sum_{x \in X} \varphi(x) \, u_t(x,t) \, \mu(x) \hspace{2.0em} \text{for all } t > 0,
\end{equation}
\noindent where the first equality exploits \eqref{def_Phi_G}.

\medskip
\noindent Now, since $f$ is convex, \( \varphi > 0 \) in \( X \) and \( \|\varphi\|_1 = 1 \), we can apply Jensen's Inequality, in the context of the measure space \( (X,\mathcal{P}(X), \mu) \), to the last sum in \eqref{second_G}, obtaining for all \( t > 0 \):
\begin{equation}\label{JI_G}
\begin{aligned}
\sum_{x \in X} \varphi(x) \, f(u(x,t)) \, \mu(x) \geq f \left( \sum_{x \in X} \varphi(x) \, u(x,t) \, \mu(x) \right)
= f(\Phi(t)).
\end{aligned}
\end{equation}

\noindent Combining the results obtained in \eqref{Phi_dot_G} and \eqref{JI_G}, and using the definition \eqref{def_Phi_G}, \eqref{second_G} implies:
\[
\Phi'(t) + \lambda \Phi(t) \geq f(\Phi(t)) \hspace{1.6em} \text{for all } t > 0,
\]
\noindent which is a first order differential inequality associated with the initial condition \eqref{def_Phi_0_G}, namely
\[
\Phi(0) > s_0(\lambda).
\]
\noindent Lemma \ref{lemmaode} implies that \( \Phi \) develops a vertical asymptote; that is, there exists some \( T_{\Phi} \in (0,+\infty) \) such that
\[
\lim_{t \to T_{\Phi}^-} \Phi(t) = +\infty.
\]

\noindent This fact is clearly in contradiction with \eqref{contradiction_1_G}. Therefore, \( u \) cannot be a global solution of problem \eqref{problem_HR_G}, as initially assumed. In conclusion, \( u \) blows up in finite time. \hfill \( \square \)

\begin{remark}\label{remark_proposition_blow_up_chapter_3}
\noindent Considering the same framework and notation in the statement of \emph{\cref{theorem_blow_up_chapter_3}}, we now show that, if the weighted graph \( (X,\omega,\mu) \) satisfies a specific condition, then hypothesis \eqref{assumption_proposition_blow_up_chapter_3} is automatically verified. In particular, after fixing two arbitrary time instants \( t_0,T' \) such that \( 0 < t_0 < T' < T \), let us set
\[
M := \| u \|_{L^\infty(X \times (t_0,T'))} = \sup_{(x,t) \in X \times (t_0,T')} u(x,t),
\]
\noindent where in the last equality we have exploited the fact that \( u \) is nonnegative. We now make use of condition \eqref{reg_sol_2_G} in \emph{\cref{definition_sub_super_sols_G}}, in order to obtain the following:
\begin{equation}\label{estimate_M_remark}
0 \leq M \leq \sup_{(x,t) \in X \times [0,T']} u(x,t) = \| u \|_{L^\infty(X \times [0,T'])} < +\infty.
\end{equation}
\noindent Now, assume that the weighted graph \( (X,\omega,\mu) \) has bounded weighted degree, namely:
\begin{equation}\label{assumptions_graph_hypothesis_verified}
D := \sup_{x \in X} \operatorname{Deg}(x) = \sup_{x \in X} \left[ \frac{1}{\mu(x)} \sum_{y \in X} \omega(x,y) \right] < +\infty.
\end{equation}
\noindent Then, for all \( (x,t) \in X \times (t_0,T') \) it holds:
\[
\begin{aligned}
\left| \Delta u(x,t) \right| & = \frac{1}{\mu(x)} \left| \sum_{y \in X} \left[ u(y,t) - u(x,t) \right] \omega(x,y) \right| \\
& \leq \frac{1}{\mu(x)} \sum_{y \in X} \left| u(y,t) - u(x,t) \right| \omega(x,y) \\
& \leq \frac{1}{\mu(x)} \sum_{y \in X} \underbrace{\left[ u(y,t) + u(x,t) \right]}_{\leq 2M} \omega(x,y) \\
& \leq \frac{2 M}{\mu(x)} \sum_{y \in X} \omega(x,y) \\
& \equiv 2 M \, \operatorname{Deg}(x) \\
& \leq 2 M D.
\end{aligned}
\]
\noindent These passages are justified by the positivity of \( \mu \), the nonnegativity of both \( u \) and \( \omega \), and the definition of both \( M \) and \( D \).

\medskip
\noindent Now, since \( u \) is a solution to problem \eqref{problem_HR_G}, it follows that, for all \( (x,t) \in X \times (t_0,T') \):
\[
\left| u_t(x,t) \right| = \left| \Delta u(x,t) + f(u(x,t)) \right| \leq \left| \Delta u(x,t) \right| + f(u(x,t)) \leq 2 M D + f(M),
\]
where we have exploited the nonnegativity of the function \( f \), the definition of \( M \) and the fact that, as already remarked in the proof of \emph{\cref{theorem_blow_up_chapter_3}}, \( f \) is increasing on \( [0,+\infty) \). Therefore, we obtain the following estimate:
\[
\sup_{t \in (t_0,T')} \left| u_t(x,t) \right| \leq C \hspace{2.0em} \text{for all } x \in X,
\]
\noindent where we have set
\[
C := 2 M D + f(M).
\]
\noindent We now notice that \( C \in [0,+\infty) \), thanks to both \eqref{estimate_M_remark} and \eqref{assumptions_graph_hypothesis_verified}. Moreover, from the last estimate it follows:
\[
\sum_{x \in X} \varphi(x) \, \sup_{t \in (t_0,T')} \left| u_t(x,t) \right| \, \mu(x) \leq C \sum_{x \in X} \varphi(x) \, \mu(x) \equiv C \, \| \varphi \|_1 = C < +\infty,
\]
\noindent where the last equality is due to the fact that \( \varphi \) satisfies property \emph{(\(\mathcal{B}_G\))}. We then infer that
\[
\varphi \, \sup_{t \in (t_0,T')} \left| u_t(\cdot,t) \right| \in \ell^1(X,\mu),
\]
\noindent and the arbitrariness of the time instants \( t_0 \) and \( T' \) yields that condition \eqref{assumption_proposition_blow_up_chapter_3} is satisfied.

\medskip
\noindent In conclusion, if the weighted graph \( (X,\omega,\mu) \) satisfies the hypothesis \eqref{assumptions_graph_hypothesis_verified}, then the assumption \eqref{assumption_proposition_blow_up_chapter_3} in the statement of \emph{\cref{theorem_blow_up_chapter_3}} is automatically verified.
\end{remark}


\section{Proof of \cref{theorem_blow_up_general}}\label{subsection_proofs_2}

\noindent In order to show \cref{theorem_blow_up_general}, we apply \cref{theorem_blow_up_chapter_3}. To do this, we are left with constructing a function satisfying property (\(\mathcal{B}_G\)), according to \cref{def_TF_G}.

\begin{lemma}\label{lemma_ell_1}
\noindent Let \( (X,\omega,\mu) \) satisfy \emph{\cref{assumption_graph_pseudo_metric}} and be endowed with the combinatorial distance \( \rho \). Let \( \Omega \subset X \) be a nonempty finite subset, and assume that \( \psi \in C(X) \) is a spherically symmetric function with respect to \( \Omega \). Consider the function \( S \) defined in \eqref{function_S}.

\medskip
\noindent If the series
\[
\sum_{r = 0}^{+\infty} |\psi(r)| S(r)
\]
\noindent is convergent, then \( \psi \in \ell^1(X,\mu) \).
\end{lemma}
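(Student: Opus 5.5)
The plan is to compute the $\ell^1$ norm of $\psi$ by regrouping the sum over $X$ along the shells $S_r(\Omega)$. First, under \cref{assumption_graph_pseudo_metric} the graph is connected and, by \cref{remark_assumptions_G}, locally finite. So \cref{remark_partition_X_shells} applies: the family $\{S_r(\Omega)\}_{r=0}^{+\infty}$ is a partition of $X$ into nonempty finite sets, by \cref{lemma_partition_implications_rho}. In particular each $S(r)$ is a finite sum of positive numbers, hence a well-defined element of $(0,+\infty)$.

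Next, since the terms $|\psi(x)|\mu(x)$ are nonnegative, the series $\sum_{x\in X}|\psi(x)|\mu(x)$ can be rearranged and grouped arbitrarily without changing its value in $[0,+\infty]$. This is Tonelli's theorem for counting measure, or equivalently the associativity of sums of nonnegative terms. Grouping along the partition gives
\[
\|\psi\|_1=\sum_{x\in X}|\psi(x)|\,\mu(x)=\sum_{r=0}^{+\infty}\sum_{x\in S_r(\Omega)}|\psi(x)|\,\mu(x).
\]

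Then spherical symmetry (\cref{definition_spherically_symmetric_function}) gives $|\psi(x)|=|\psi(r)|$ for every $x\in S_r(\Omega)$. The inner finite sum therefore equals $|\psi(r)|\sum_{x\in S_r(\Omega)}\mu(x)=|\psi(r)|S(r)$, and so
\[
\|\psi\|_1=\sum_{r=0}^{+\infty}|\psi(r)|\,S(r)<+\infty
\]
by hypothesis. This gives $\psi\in\ell^1(X,\mu)$.

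The only delicate point is justifying the regrouping of a possibly infinite series over the countable set $X$. I would handle it by appealing to nonnegativity, exhausting $X$ by the finite sets $\bigcup_{r\le n}S_r(\Omega)$ and taking monotone limits, rather than assuming convergence in advance.
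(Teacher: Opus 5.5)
Your proposal is correct and matches the paper's proof essentially step by step: local finiteness and connectedness give the shell partition with $S(r)\in(0,+\infty)$, you regroup $\|\psi\|_1$ along the shells, and spherical symmetry yields $\|\psi\|_1=\sum_{r}|\psi(r)|S(r)$. Your explicit justification of the regrouping (nonnegativity, via Tonelli or monotone exhaustion by finite unions of shells) is a detail the paper leaves implicit.
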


\noindent \emph{Proof.} We first notice that the function \( S \) actually maps \( \mathbb{N}_0 \) into \( (0,+\infty) \). Indeed, after fixing an arbitrary \( r \in \mathbb{N}_0 \), we can apply result b) of \cref{lemma_partition_implications_rho}, since \( (X,\omega,\mu) \) is assumed to satisfy \cref{assumption_graph_pseudo_metric}, hence it is connected and also locally finite, by \cref{remark_assumptions_G}. Now, result b) in \cref{lemma_partition_implications_rho} ensures that the shell \( S_r(\Omega) \) is nonempty and finite. In particular, since the node measure \( \mu \) maps \( X \) into \( (0,+\infty) \), we conclude that \( S(r) \in (0,+\infty) \).

\medskip
\noindent Now, we recall from \cref{remark_partition_X_shells} that the family \( \{ S_r(\Omega) \}_{r=0}^{+\infty} \) is a partition of the vertex set \( X \). This allows us to perform the following computations:
\[
\begin{aligned}
\| \psi \|_1 & = \sum_{x \in X} \left| \psi(x) \right| \hspace{0.1em} \mu(x) \\
& = \sum_{r=0}^{+\infty} \sum_{x \in S_r(\Omega)} \left| \psi(x) \right| \hspace{0.1em} \mu(x) \\
& = \sum_{r=0}^{+\infty} \left| \psi(r) \right| \left[ \sum_{x \in S_r(\Omega)} \mu(x) \right] \\
& = \sum_{r=0}^{+\infty} \left| \psi(r) \right| S(r),
\end{aligned}
\]
\noindent where the third equality comes from the spherical symmetry of \( \psi \) with respect to \( \Omega \), while the last equality exploits \eqref{function_S}. Now, since the series at the right-hand side is assumed to be finite, then it follows \( \| \psi \|_1 < +\infty \), meaning that \( \psi \in \ell^1(X,\mu) \). \hfill \( \square \)

\begin{lemma}\label{lemma_double_sum}
\noindent Let the assumptions of \emph{\cref{lemma_ell_1}} be fulfilled, and moreover assume that
\[
\sup_{x \in X} \hspace{0.1em} \mathfrak{D}_+(x) < +\infty \hspace{1.8em} \text{and} \hspace{1.8em} \sup_{x \in X} \hspace{0.1em} \mathfrak{D}_-(x) < +\infty.
\]
\noindent If the series
\[
\sum_{r=0}^{+\infty} S(r) \left[ | \psi(r+1) - \psi(r) | + | \psi(r-1) - \psi(r) | \right]
\]
\noindent is convergent, then it holds:
\[
\sum_{x,y \in X} \omega(x,y) \hspace{0.1em} | \nabla_{xy} \psi | < +\infty.
\]
\end{lemma}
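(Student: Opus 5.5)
The plan is to regroup the double sum according to the shell of the first vertex, and to use that edges of the combinatorial distance only join vertices in the same or adjacent shells. Since $\rho$ is a metric and $\rho(x,y)=1$ whenever $x\sim y$, the triangle inequality for $\rho(\cdot,\Omega)$ gives $|r(x)-r(y)|\le 1$. Hence every neighbour $y$ of a vertex $x\in S_r(\Omega)$ lies in $S_{r-1}(\Omega)\cup S_r(\Omega)\cup S_{r+1}(\Omega)$. Because $\psi$ is spherically symmetric, $\nabla_{xy}\psi=0$ when $y\in S_r(\Omega)$. All terms are nonnegative, so by \cref{remark_partition_X_shells} the sum can be rearranged freely:
\[
\sum_{x,y\in X}\omega(x,y)\,|\nabla_{xy}\psi|=\sum_{r=0}^{+\infty}\sum_{x\in S_r(\Omega)}\Bigl[\sum_{y\in S_{r+1}(\Omega)}\omega(x,y)\,|\psi(r+1)-\psi(r)|+\sum_{y\in S_{r-1}(\Omega)}\omega(x,y)\,|\psi(r-1)-\psi(r)|\Bigr].
\]
For $r=0$ the second inner sum is empty, since $S_{-1}(\Omega)=\emptyset$; equivalently $\mathfrak{D}_-=0$ on $\Omega$. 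Any convention for the value $\psi(-1)$ appearing in the hypothesis then only adds a nonnegative term, which does no harm.

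Next, I would recognise the inner sums through \cref{definition_inner_outer_degrees}:
\[
\sum_{y\in S_{r\pm1}(\Omega)}\omega(x,y)=\mu(x)\,\mathfrak{D}_\pm(x).
\]
Setting $K:=\max\{\sup_X\mathfrak{D}_+,\sup_X\mathfrak{D}_-\}<+\infty$, the contribution of $x\in S_r(\Omega)$ is bounded by $K\mu(x)\bigl[|\psi(r+1)-\psi(r)|+|\psi(r-1)-\psi(r)|\bigr]$. Summing over $x\in S_r(\Omega)$ and using \eqref{function_S} gives
\[
\sum_{x,y\in X}\omega(x,y)\,|\nabla_{xy}\psi|\le K\sum_{r=0}^{+\infty}S(r)\bigl[|\psi(r+1)-\psi(r)|+|\psi(r-1)-\psi(r)|\bigr]<+\infty,
\]
which is the claim.

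No step here is really an obstacle. The one point needing care is justifying that neighbours lie only in adjacent shells, which is why the hypothesis involves only nearest-shell differences. It rests on $\rho(x,y)=1$ for $x\sim y$ together with $|\rho(x,\Omega)-\rho(y,\Omega)|\le\rho(x,y)$. The latter holds because, for $z\in\Omega$, $\rho(x,z)\le\rho(x,y)+\rho(y,z)$, and one takes minima over $z$. The rearrangement of the double series is legitimate by nonnegativity (Tonelli on the counting measure), and local finiteness from \cref{remark_assumptions_G} keeps each inner sum finite.
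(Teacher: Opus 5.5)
Your proof is correct and follows essentially the paper's route. The paper rederives \cref{lemma_formula_Laplacian_for_WSS_graphs} with absolute values to get $\sum_{y}\omega(x,y)|\nabla_{xy}\psi| = \mu(x)\mathfrak{D}_+(x)|\psi(r(x)+1)-\psi(r(x))| + \mu(x)\mathfrak{D}_-(x)|\psi(r(x)-1)-\psi(r(x))|$, bounds $\mathfrak{D}_\pm$ by $\max\{\sup_X\mathfrak{D}_+,\sup_X\mathfrak{D}_-\}$ and regroups over shells, exactly as you do with the adjacent-shell fact proved directly (your remark that $\mathfrak{D}_-=0$ on $\Omega$ makes $\psi(-1)$ harmless is a point the paper leaves implicit).
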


\noindent \emph{Proof.} First, let us consider an arbitrary function \( h \in C(X) \) being spherically symmetric with respect to \( \Omega \). By Lemma \ref{lemma_formula_Laplacian_for_WSS_graphs} we obtain the following formula, valid for all \( x \in X \):
\[
\begin{aligned}
\mu(x) \hspace{0.05em} \Delta h(x) & = \sum_{y \in X} \omega(x,y) \left[ \nabla_{xy} h \right] \\
& = \mu(x) \hspace{0.05em} \left\{ \mathfrak{D}_+(x) \left[ h(r(x)+1) - h(r(x)) \right] + \mathfrak{D}_-(x) \left[ h(r(x)-1) - h(r(x)) \right] \right\}.
\end{aligned}
\]
\noindent In particular, by inserting a modulus inside the sum and repeating the exact same passages as those present in the proof of \cref{lemma_formula_Laplacian_for_WSS_graphs}, the following identity is obtained:
\begin{equation}\label{identity_Laplacian_f_modulus}
\begin{aligned}
\sum_{y \in X} \omega(x,y) \left| \nabla_{xy} h \right| & = \mu(x) \, \mathfrak{D}_+(x) \left| h(r(x)+1) - h(r(x)) \right| \hspace{0.4em} + \\
& \hspace{1.1em} + \mu(x) \, \mathfrak{D}_-(x) \left| h(r(x)-1) - h(r(x)) \right|,
\end{aligned}
\end{equation}
\noindent holding for all \( x \in X \). Notice that this identity can be used with the function \( \psi \), which is assumed to be spherically symmetric with respect to \( \Omega \). Therefore, after setting
\[
D_+ := \sup_{x \in X} \hspace{0.1em} \mathfrak{D}_+(x)
\]
\noindent and
\[
D_- := \sup_{x \in X} \hspace{0.1em} \mathfrak{D}_-(x),
\]
\noindent the following computations can be performed:
\begin{equation}\label{passages_lemma_double_sum}
\begin{aligned}
\sum_{x,y \in X} \omega(x,y) \hspace{0.1em} \left| \nabla_{xy} \psi \right| & = \sum_{x \in X} \left[ \sum_{y \in X} \omega(x,y) \hspace{0.1em} \left| \nabla_{xy} \psi \right| \right] \\
& = \sum_{x \in X} \mu(x) \, \mathfrak{D}_+(x) \left| \psi(r(x)+1) - \psi(r(x)) \right| \hspace{0.4em} + \\
& \hspace{1.1em} + \sum_{x \in X} \mu(x) \, \mathfrak{D}_-(x) \left| \psi(r(x)-1) - \psi(r(x)) \right| \\
& \leq D_+ \sum_{x \in X} \mu(x) \hspace{0.05em} \left| \psi(r(x)+1) - \psi(r(x)) \right| \hspace{0.4em} + \\
& \hspace{1.1em} + D_- \sum_{x \in X} \mu(x) \hspace{0.05em} \left| \psi(r(x)-1) - \psi(r(x)) \right| \\
& \leq \max \left\{ D_+, D_- \right\} \sum_{x \in X} \mu(x) \, \left| \psi(r(x)+1) - \psi(r(x)) \right| \hspace{0.4em} + \\
& \hspace{1.1em} + \max \left\{ D_+, D_- \right\} \sum_{x \in X} \mu(x) \, \left| \psi(r(x)-1) - \psi(r(x)) \right| \\
& = C \sum_{r=0}^{+\infty} \sum_{x \in S_r(\Omega)} \mu(x) \hspace{0.05em} \left\{ \left| \psi(r+1) - \psi(r) \right| + \left| \psi(r-1) - \psi(r) \right| \right\} \\
& = C \sum_{r=0}^{+\infty} \left\{ \left| \psi(r+1) - \psi(r) \right| + \left| \psi(r-1) - \psi(r) \right| \right\} \left[ \sum_{x \in S_r(\Omega)} \mu(x) \right] \\
& \equiv C \sum_{r=0}^{+\infty} S(r) \hspace{0.05em} \left\{ \left| \psi(r+1) - \psi(r) \right| + \left| \psi(r-1) - \psi(r) \right| \right\},
\end{aligned}
\end{equation}
\noindent where we have set
\[
C := \max \left\{ D_+, D_- \right\} \equiv \max \left\{ \sup_{x \in X} \hspace{0.1em} \mathfrak{D}_+(x), \sup_{x \in X} \hspace{0.1em} \mathfrak{D}_-(x) \right\},
\]
\noindent which belongs to \( [0,+\infty) \), thanks to the assumption. In \eqref{passages_lemma_double_sum}, the second equality is justified by \eqref{identity_Laplacian_f_modulus}, while the third one exploits both the spherical symmetry of \( \psi \) and the fact that, as stated in \cref{remark_partition_X_shells}, the family \( \{ S_r(\Omega) \}_{r=0}^{+\infty} \) is a partition of the vertex set \( X \). The other passages in \eqref{passages_lemma_double_sum} are immediate.

\medskip
\noindent Finally, thanks to the fact that \( C \in [0,+\infty) \), and making use of the finiteness of the series in the assumption, it follows that the term at the right-hand side of \eqref{passages_lemma_double_sum} is finite, which proves the thesis. \hfill \( \square \)

\begin{lemma}\label{lemma_laplacian_lambda}
\noindent Let the assumptions of \emph{\cref{lemma_ell_1}} be fulfilled, and define the function \( \psi \) as
\begin{equation}\label{def_psi_radial}
\psi(x) := e^{- a \hspace{0.1em} r(x)} \hspace{1.8em} \text{for all } \hspace{0.1em} x \text{ in } X,
\end{equation}
\noindent where \( a > 0 \) is a parameter. Furthermore, assume that
\[
\sup_{x \in X} \hspace{0.1em} \mathfrak{D}_+(x) < +\infty.
\]
\noindent Then the following inequality holds:
\[
\Delta \psi(x) + \lambda \psi(x) \geq 0 \hspace{1.8em} \text{for all } \hspace{0.1em} x \text{ in } X,
\]
\noindent provided that
\[
\lambda \geq \sup_{x \in X} \hspace{0.1em} \mathfrak{D}_+(x).
\]
\end{lemma}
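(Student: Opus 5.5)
We will compute $\Delta\psi$ explicitly using \cref{lemma_formula_Laplacian_for_WSS_graphs}, since $\psi$ is spherically symmetric with respect to $\Omega$. That formula is valid for any spherically symmetric function on a connected, locally finite graph, without requiring weak spherical symmetry of the graph itself. The hypotheses of \cref{lemma_ell_1} give connectedness, and \cref{remark_assumptions_G} gives local finiteness. We then split into the cases $x\in\Omega$ and $x\in X\setminus\Omega$.

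For $x\in\Omega$, i.e.\ $r(x)=0$, the lemma gives
\[
\Delta\psi(x)=\mathfrak{D}_+(x)\,\bigl(e^{-a}-1\bigr).
\]
Since $\psi(x)=1$, we get $\Delta\psi(x)+\lambda\psi(x)=\lambda-\mathfrak{D}_+(x)(1-e^{-a})$. This is nonnegative because $0<1-e^{-a}<1$, $\mathfrak{D}_+(x)\ge 0$, and $\lambda\ge\sup_X\mathfrak{D}_+$.

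For $x\notin\Omega$ with $r=r(x)\ge1$, the lemma gives
\[
\Delta\psi(x)=\mathfrak{D}_+(x)\,e^{-ar}\bigl(e^{-a}-1\bigr)+\mathfrak{D}_-(x)\,e^{-ar}\bigl(e^{a}-1\bigr).
\]
The second term is nonnegative, since $\mathfrak{D}_-\ge0$ and $e^{a}>1$, so we may drop it. Factoring out $e^{-ar}=\psi(x)>0$ leaves
\[
\Delta\psi(x)+\lambda\psi(x)\ge\psi(x)\bigl[\lambda-\mathfrak{D}_+(x)(1-e^{-a})\bigr]\ge0,
\]
by the same bound as in the first case.

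The only point requiring care is confirming that \cref{lemma_formula_Laplacian_for_WSS_graphs} applies, namely that every neighbor of $x$ lies in $S_{r-1}(\Omega)\cup S_r(\Omega)\cup S_{r+1}(\Omega)$ and that same-shell neighbors contribute zero. This holds because $\rho$ is the combinatorial distance, so neighbors differ in $r$ by at most one, and $\psi$ is constant on shells. The finiteness of $\sup\mathfrak{D}_+$ is used only to make the condition on $\lambda$ meaningful. In fact the argument shows that the weaker condition $\lambda\ge(1-e^{-a})\sup\mathfrak{D}_+$ already suffices.
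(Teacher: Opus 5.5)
Your proof is correct and follows the paper's argument. Both apply the Laplacian formula for spherically symmetric functions, drop the nonnegative $\mathfrak{D}_-$ term, and bound what remains by $\psi(x)\bigl[\lambda-\mathfrak{D}_+(x)(1-e^{-a})\bigr]\ge 0$; the paper writes this as $\mathfrak{D}_+(x)e^{-a(r(x)+1)}+e^{-ar(x)}\bigl[\lambda-\mathfrak{D}_+(x)\bigr]$ and handles $x\in\Omega$ without a separate case, using $\mathfrak{D}_-=0$ there, and your remark that $\lambda\ge(1-e^{-a})\sup_X\mathfrak{D}_+$ already suffices is a valid minor sharpening.
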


\noindent \emph{Proof.} We first notice that the analytical expression of the function \( \psi \) depends on the variable only through \( r(\cdot) = \rho(\cdot,\Omega) \), and therefore it is immediate to verify that \( \psi \) is spherically symmetric with respect to \( \Omega \).

\medskip
\noindent Moreover, as already specified at the beginning of the proof of the previous lemma, by Lemma \ref{lemma_formula_Laplacian_for_WSS_graphs}  we obtain the following formula for the function \( \psi \in C(X) \), which is spherically symmetric with respect to \( \Omega \):
\begin{equation}\label{identity_Laplacian_psi}
\Delta \psi(x) = \mathfrak{D}_+(x) \left[ \psi(r(x)+1) - \psi(r(x)) \right] + \mathfrak{D}_-(x) \left[ \psi(r(x)-1) - \psi(r(x)) \right],
\end{equation}
\noindent holding for all \( x \in X \). Then we can write, for all \( x \in X \):
\[
\begin{aligned}
\Delta \psi(x) & = \mathfrak{D}_+(x) \left[ e^{- a \hspace{0.1em} (r(x)+1)} - e^{- a \hspace{0.1em} r(x)} \right] + \mathfrak{D}_-(x) \left[ e^{- a \hspace{0.1em} (r(x)-1)} - e^{- a \hspace{0.1em} r(x)} \right] \\
& \geq \mathfrak{D}_+(x) \left[ e^{- a \hspace{0.1em} (r(x)+1)} - e^{- a \hspace{0.1em} r(x)} \right].
\end{aligned}
\]
\noindent Here, the equality exploits \eqref{identity_Laplacian_psi}, together with the definition of \( \psi \) and the characteristic abuse of notation introduced when dealing with spherically symmetric functions. On the other hand, the inequality is a consequence of the fact that the function \( e^{-a (\cdot)} : [0,+\infty) \to (0,+\infty) \) is decreasing, and that the inner degree is nonnegative.

\medskip
\noindent Now, consider \( \lambda \) as in the statement, so that
\[
\lambda \geq \sup_{x \in X} \hspace{0.1em} \mathfrak{D}_+(x) \geq \mathfrak{D}_+(x), \hspace{1.8em} \text{for all } x \in X.
\]
\noindent From the last computations it directly follows that, for all \( x \in X \):
\[
\begin{aligned}
\Delta \psi(x) + \lambda \psi(x) & \geq \mathfrak{D}_+(x) \left[ e^{- a \hspace{0.1em} (r(x)+1)} - e^{- a \hspace{0.1em} r(x)} \right] + \lambda \hspace{0.1em} e^{- a \hspace{0.1em} r(x)} \\
& \equiv \underbrace{\mathfrak{D}_+(x)}_{\geq 0} \hspace{0.1em} \underbrace{e^{- a \hspace{0.1em} (r(x)+1)}}_{>0} + \underbrace{e^{- a \hspace{0.1em} r(x)}}_{>0} \underbrace{\left[ \lambda - \mathfrak{D}_+(x) \right]}_{\geq 0} \\
& \geq 0.
\end{aligned}
\]
\noindent This ends the proof. \hfill \( \square \)

\begin{corollary}\label{corollary_property_B_G_general_graphs_1}
\noindent Let \( (X,\omega,\mu) \) be a weighted graph satisfying \emph{\cref{assumption_graph_pseudo_metric}}, endowed with the combinatorial distance \( \rho \). Let \( \Omega \subset X \) be a nonempty finite subset and define the function \( S \) as in \eqref{function_S}. Furthermore, assume that conditions (a)-(b) in \emph{\cref{theorem_blow_up_general}} are fulfilled.

\noindent Then the function \( \varphi \) defined in \eqref{definition_varphi_barrier_chapter 3} satisfies the property \emph{(\(\mathcal{B}_G\))}, provided that
\[
\lambda \geq \sup_{x \in X} \hspace{0.1em} \mathfrak{D}_+(x).
\]
\end{corollary}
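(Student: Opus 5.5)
We check the four conditions of \cref{def_TF_G} for $\varphi = C\psi$, where $\psi(x)=e^{-a\,r(x)}$ is the function from \eqref{def_psi_radial}. The pseudo metric here is the combinatorial distance $\rho$. Condition a) is immediate: $C>0$, because the series in (b) of \cref{theorem_blow_up_general} converges and has positive terms, since $S(r)>0$ as shown in the proof of \cref{lemma_ell_1}; also $e^{-a r(x)}>0$.

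For condition b), $\psi$ is spherically symmetric with respect to $\Omega$. Since $\{S_r(\Omega)\}_r$ is a partition of $X$ (\cref{remark_partition_X_shells}), the computation in the proof of \cref{lemma_ell_1} gives
\[
\|\psi\|_1=\sum_{r\ge0} e^{-ar}S(r),
\]
which is finite by hypothesis (b). The choice of $C$ in \eqref{definition_varphi_barrier_chapter 3} is exactly $\|\psi\|_1^{-1}$, so $\|\varphi\|_1=1$.

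Condition d) follows from \cref{lemma_laplacian_lambda}, which applies because $\sup\mathfrak D_+<\infty$ by (a). Multiplying its conclusion by $C>0$ gives $\Delta\varphi+\lambda\varphi\ge0$ whenever $\lambda\ge\sup_X\mathfrak D_+$.

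Condition c) is the only step needing a computation. Each sum in c) is restricted to $x$ in an annulus and has nonnegative terms, so it is bounded by the full double sum $\sum_{x,y\in X}\omega(x,y)|\nabla_{xy}\varphi|$. It therefore suffices to show that this full sum is finite; we then take $C_1$ equal to it, or $1$ if it vanishes, and this $C_1$ is uniform in $R$ and $\delta$. By \cref{lemma_double_sum}, which again uses (a), finiteness follows once we know that
\[
\sum_{r\ge0}S(r)\big[|\psi(r+1)-\psi(r)|+|\psi(r-1)-\psi(r)|\big]<\infty .
\]
For $r\ge1$ we have $|\psi(r\pm1)-\psi(r)|\le(e^{a}+1)e^{-ar}$. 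At $r=0$ the quantity $\psi(-1)$ is only formal, but the term $\mathfrak D_-$ vanishes on $\Omega$ anyway, so we may set it to zero or accept a finite term; either way the $r=0$ contribution is finite. The whole series is thus dominated by $(e^a+1)\sum_r S(r)e^{-ar}<\infty$.

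The main point requiring care is the $r=0$ convention in \cref{lemma_double_sum}, together with noticing that the restricted sum in c) is dominated by the unrestricted one. The remaining steps are direct applications of \cref{lemma_ell_1}, \cref{lemma_double_sum} and \cref{lemma_laplacian_lambda}.
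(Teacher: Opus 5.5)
Your proposal is correct and follows the paper's proof essentially step by step. You get $C>0$ from $S>0$ and hypothesis b), $\|\varphi\|_1=1$ from the shell decomposition in \cref{lemma_ell_1}, condition d) from \cref{lemma_laplacian_lambda} after scaling by $C$, and condition c) by dominating the annular sum by the full double sum and applying \cref{lemma_double_sum}. The only differences are cosmetic: the paper computes $|\psi(r+1)-\psi(r)|+|\psi(r-1)-\psi(r)|=e^{-ar}(e^{a}-e^{-a})$ exactly for all $r\in\mathbb{N}_0$ (taking $\psi(-1)=e^{a}$ formally), whereas you use a cruder bound and comment explicitly on the $r=0$ convention and on choosing $C_1>0$.
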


\noindent \emph{Proof.} We first notice that the constant \( C \) appearing in \eqref{definition_varphi_barrier_chapter 3} belongs to \( (0,+\infty) \), thanks to the strict positivity of the function \( S \) and to assumption b) in \cref{theorem_blow_up_general}. This allows us to verify that \( \varphi \) maps \( X \) into \( (0,+\infty) \), which corresponds to condition a) of property (\(\mathcal{B}_G\)), according to \cref{def_TF_G}.

\medskip
\noindent Moreover, the following relation holds between \( \varphi \) and the function \( \psi \) defined in \eqref{def_psi_radial}:
\begin{equation}\label{relation_varphi_psi}
\varphi(x) = C \hspace{0.1em} \psi(x), \hspace{1.0em} \text{for all } x \in X.
\end{equation}
\noindent Now, in the proof of \cref{lemma_laplacian_lambda} the function \( \psi \) has already been proved to be spherically symmetric with respect to \( \Omega \). Therefore, by exploiting \eqref{relation_varphi_psi} it is immediate to deduce that also \( \varphi \) is spherically symmetric. In addition, it holds:
\begin{equation}\label{varphi_sum_equal_to_1}
\sum_{r = 0}^{+\infty} |\varphi(r)| S(r) \equiv \sum_{r = 0}^{+\infty} \varphi(r) \hspace{0.1em} S(r) = \sum_{r = 0}^{+\infty} C \hspace{0.05em} e^{-a \hspace{0.05em} r} S(r) \equiv C \left[ \sum_{r = 0}^{+\infty} e^{-a \hspace{0.05em} r} S(r) \right] = \frac{C}{C} \equiv 1.
\end{equation}
\noindent Therefore, all the assumptions of \cref{lemma_ell_1} are satisfied, yielding that \( \varphi \in \ell^1(X,\mu) \). Furthermore, by arguing in the same way as in the proof of \cref{lemma_ell_1}, we get:
\[
\| \varphi \|_1 = \sum_{r=0}^{+\infty} \left| \varphi(r) \right| S(r) = 1,
\]
\noindent where the last equality comes from \eqref{varphi_sum_equal_to_1}. In particular, condition b) of property (\(\mathcal{B}_G\)) is satisfied by \( \varphi \).

\medskip
\noindent Now, for all \( r \in \mathbb{N}_0 \) we can write:
\[
\begin{aligned}
| \psi(r+1) - \psi(r) | + | \psi(r-1) - \psi(r) | & = | e^{-a \hspace{0.05em} (r+1)} - e^{-a \hspace{0.05em} r} | + | e^{-a \hspace{0.05em} (r-1)} - e^{-a \hspace{0.05em} r} | \\
& = e^{-a \hspace{0.05em} r} | e^{-a} - 1 | + e^{-a \hspace{0.05em} r} | e^a - 1 | \\
& = e^{-a \hspace{0.05em} r} \left[ 1 - e^{-a} \right] + e^{-a \hspace{0.05em} r} \left[ e^a - 1 \right] \\
& = e^{-a \hspace{0.05em} r} \left[ e^a - e^{-a} \right],
\end{aligned}
\]
\noindent where we have used the definition of the function \( \psi \) and the fact that \( a > 0 \). By summing the obtained equality over all \( r \in \mathbb{N}_0 \), we infer:
\[
\sum_{r=0}^{+\infty} S(r) \left[ | \psi(r+1) - \psi(r) | + | \psi(r-1) - \psi(r) | \right] = \left( e^a - e^{-a} \right) \sum_{r=0}^{+\infty} S(r) e^{-a \hspace{0.05em} r} < +\infty,
\]
\noindent where the inequality comes from assumption b) in \cref{theorem_blow_up_general}. Therefore, after multiplying by \( C \in (0,+\infty) \), we can exploit the relation \eqref{relation_varphi_psi} in order to obtain:
\[
\sum_{r=0}^{+\infty} S(r) \left[ | \varphi(r+1) - \varphi(r) | + | \varphi(r-1) - \varphi(r) | \right] < +\infty.
\]
\noindent This fact, together with assumption a) in \cref{theorem_blow_up_general}, allows us to apply \cref{lemma_double_sum}, yielding:
\[
\sum_{x,y \in X} \omega(x,y) \hspace{0.1em} | \nabla_{xy} \varphi | < +\infty.
\]
\noindent In particular, after fixing two arbitrary values \( R > 0 \) and \( \delta \in (0,1) \), this sum is independent of both \( R \) and \( \delta \), and the following holds:
\[
\sum_{\substack{x,y \in X \\ \hspace{1.0em} x \in \overline{B}_R \setminus B_{(1-\delta)R - 2s}}} \omega(x,y) \hspace{0.2em} |\nabla_{xy} \varphi| \hspace{0.1em}
\leq \hspace{0.1em} \sum_{x,y \in X} \omega(x,y) \hspace{0.1em} | \nabla_{xy} \varphi | < +\infty,
\]
\noindent which implies that also condition c) of property (\(\mathcal{B}_G\)) is satisfied by \( \varphi \).

\medskip
\noindent Finally, we notice that, under our hypotheses, \cref{lemma_laplacian_lambda} can be applied, yielding:
\[
\Delta \psi(x) + \lambda \psi(x) \geq 0 \hspace{1.8em} \text{for all } \hspace{0.1em} x \text{ in } X.
\]
\noindent By exploiting both the linearity of the Laplacian operator and the relation \eqref{relation_varphi_psi}, we finally obtain, for all \( x \in X \):
\[
\Delta \varphi(x) + \lambda \varphi(x) = C \left[ \Delta \psi(x) + \lambda \psi(x) \right] \geq 0,
\]
\noindent where the inequality is due to the fact that \( C \in (0,+\infty) \). In conclusion, the function \( \varphi \) satisfies condition d) of property (\(\mathcal{B}_G\)), according to \cref{def_TF_G}. This finishes the proof. \hfill \( \square \)

\medskip
\noindent \emph{Proof of \emph{\cref{theorem_blow_up_general}}}. The proof follows directly from \cref{theorem_blow_up_chapter_3} and \cref{corollary_property_B_G_general_graphs_1}. \hfill \( \square \)

\begin{remark}\label{remark_general_graphs_blow_up}
\noindent By employing \emph{\cref{remark_proposition_blow_up_chapter_3}}, we observe that, if in addition the weighted graph \( (X,\omega,\mu) \) satisfies the assumption \eqref{assumptions_graph_hypothesis_verified}, namely it has bounded weighted degree, then the integrability condition \eqref{assumption_proposition_blow_up_chapter_3} in the statement of \emph{\cref{theorem_blow_up_general}} is automatically satisfied.
\end{remark}


\section{Proofs of \cref{theorem_blow_up_model_trees} and \cref{theorem_blow_up_homogeneous_trees}}\label{subsection_proofs_3}

\noindent We now apply Kaplan's method in the context of model trees. We first need to exhibit a function satisfying the property (\(\mathcal{B}_G\)), according to \cref{def_TF_G}.

\begin{lemma}\label{corollary_property_B_G_model_trees}
\noindent Let \( (\mathbb{T},\omega_0,\mu_1) \) be a model tree, with root \( x_0 \in \mathbb{T} \) and branching function \( b \). Suppose that \eqref{assumption_sup_branching_finite} holds, namely:
\[
B = \sup_{r \in \mathbb{N}_0} b(r) < +\infty.
\]
\noindent Then, for any constant \( a > \log(B) \), the function \( \varphi \) defined in \eqref{definition_varphi_model_trees} satisfies the property \emph{(\(\mathcal{B}_G\))}, provided that \( \lambda \geq B \).
\end{lemma}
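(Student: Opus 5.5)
The plan is to reduce the statement to \cref{corollary_property_B_G_general_graphs_1}, applied with $X=\mathbb{T}$, $\Omega=\{x_0\}$, the combinatorial distance $\rho$, and $\mu=\mu_1\equiv 1$. This requires four checks. First, $(\mathbb{T},\omega_0,\mu_1)$ satisfies \cref{assumption_graph_pseudo_metric}. Second, conditions a) and b) of \cref{theorem_blow_up_general} hold. Third, the function in \eqref{definition_varphi_barrier_chapter 3} coincides with the one in \eqref{definition_varphi_model_trees}. Fourth, $\sup_x \mathfrak{D}_+(x)\le B$, so that $\lambda\ge B$ suffices.

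For \cref{assumption_graph_pseudo_metric} I take $d=\rho$. The jump size is $s=1$ by \cref{lemma_impliactions_model_trees} a)--b), since neighbours lie in adjacent shells. Balls are finite because each shell is finite: every vertex has $\deg(r)=b(r)+1\le B+1$ neighbours, so $|S_r(x_0)|=\prod_{k=0}^{r-1}b(k)<\infty$ by induction. For the 1--intrinsic property, $\frac{1}{\mu_1(x)}\sum_y\omega_0(x,y)\rho(x,y)=\deg(x)\le B+1=:C_0$.

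For condition a), \cref{lemma_impliactions_model_trees} c) gives $\mathfrak{D}_+(x)=b(r(x))\le B$, and by the definition of model tree $\mathfrak{D}_-\le 1$. For condition b), the shell measure is $S(r)=|S_r(x_0)|$. Counting via the branching function (each vertex of $S_k$ has exactly $b(k)$ children and each vertex of $S_{k+1}$ has exactly one parent) gives $S(0)=1$ and $S(r)=\prod_{k=0}^{r-1}b(k)\le B^r$. Hence
\[
\sum_{r\ge0}S(r)e^{-ar}\le\sum_{r\ge0}(Be^{-a})^r<+\infty,
\]
since $a>\log B$ gives $Be^{-a}<1$. The same formula for $S(r)$ shows that the normalising constant in \eqref{definition_varphi_barrier_chapter 3} equals the $C$ in \eqref{definition_varphi_model_trees}, so the two definitions of $\varphi$ agree. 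Finally, $\lambda\ge B\ge\sup_x\mathfrak{D}_+(x)$, and the corollary yields property $(\mathcal{B}_G)$.

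The only step needing some care is the shell-counting identity $|S_{r+1}|=b(r)|S_r|$. It relies on two facts: there are no intra-shell edges, and $\mathfrak{D}_-\equiv1$ off the root, which gives each vertex a unique parent. The subtle point is the root, where $b(0)=\deg(x_0)$ rather than $\deg(0)-1$; this is covered by \eqref{property_0_1_second_way_model_trees}. Everything else is direct verification.
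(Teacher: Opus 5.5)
Your proposal is correct and follows the paper's proof essentially step for step. You reduce to \cref{corollary_property_B_G_general_graphs_1} with $\Omega=\{x_0\}$ and $d=\rho$, check \cref{assumption_graph_pseudo_metric} (jump size $1$, finite balls, $1$--intrinsic with bound $\sup\deg$), use $S(r)=\prod_{k=0}^{r-1}b(k)\le B^r$ both for convergence of the geometric series and to identify the two normalising constants, and use $\mathfrak{D}_+=b(r(\cdot))\le B\le\lambda$.

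A few small points to tidy:
\begin{itemize}
\item Property (PM) b) asks for finite balls about every vertex, not only about $x_0$. This follows at once from $B_r(x)\subseteq B_{r+\rho(x,x_0)}(x_0)$, or from your degree bound $B+1$.
\item The identity $s=1$ holds simply because $\rho(x,y)=1$ whenever $x\sim y$; the shell structure is not what gives it.
\item Condition b) of \cref{theorem_blow_up_general} requires $a>0$, which follows from $a>\log B\ge 0$ since $b(r)\ge 1$.
\end{itemize}
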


\noindent \emph{Proof.} We aim to apply \cref{corollary_property_B_G_general_graphs_1} to the current framework. In order to do so, we first remark that, in the specific context of model trees, we set \( \Omega = \{x_0\} \), corresponding to a nonempty, finite subset of \( \mathbb{T} \). Furthermore, model trees are naturally endowed with the combinatorial distance \( \rho \). We then need to verify that, under our hypotheses, the model tree \( (\mathbb{T},\omega_0,\mu_1) \) satisfies \cref{assumption_graph_pseudo_metric}, and also that both conditions a) and b) in \cref{theorem_blow_up_general} are satisfied.

\medskip
\noindent First, we have that \( (\mathbb{T},\omega_0,\mu_1) \) is connected and locally finite. Furthermore, we recall that, by the definition of the combinatorial distance, for every couple of neighboring nodes \( x,y \in \mathbb{T} \) it holds \( \rho(x,y) = 1 \). Hence the jump size satisfies
\[
s = \sup\{ \rho(x,y) : x, y \in \mathbb{T}, \text{ } x \sim y \} = 1.
\]
\noindent In particular, \( s \in (0,+\infty) \).

\medskip
\noindent Since \( (\mathbb{T},\omega_0,\mu_1) \) is locally finite, result b) in \cref{lemma_partition_implications_rho} can be applied. Specifically, after fixing an arbitrary node \( x^* \in \mathbb{T} \), then for each \( r \in \mathbb{N}_0 \), the shell of radius \( r \) centered at \( x^* \), namely
\[
S_r(\{x^*\}) = \left\{ x \in \mathbb{T} : \rho(x,\{x^*\}) = r \right\},
\]
\noindent is nonempty and finite. Now, notice that, given an arbitrary \( r > 0 \), the ball of radius \( r \) centred at \( \{x_0\} \) can be rewritten as a finite union of such shells, namely:
\[
\begin{aligned}
B_r(x^*) & = \{ x \in \mathbb{T} : \rho(x, x^*) < r \} \\
& = \{ x \in \mathbb{T} : \rho(x, x^*) \leq \lceil r \rceil - 1 \} \\
& = \bigcup_{k=0}^{\lceil r \rceil - 1} \{ x \in \mathbb{T} : \rho(x, x^*) = k \} \\
& = \bigcup_{k=0}^{\lceil r \rceil - 1} S_k(\{x^*\}),
\end{aligned}
\]
\noindent where \( \lceil \cdot \rceil \) represent the ceiling function. Here, the second and the third equalities hold since the combinatorial distance has codomain \( \mathbb{N}_0 \). Now, thanks to the fact that each shell is nonempty and finite, then we obtain the finiteness of \( B_r(x^*) \) for any \( r > 0 \).

\medskip
\noindent Now, for any vertex \( x \in \mathbb{T} \) we compute:
\begin{equation}\label{passages_degree_model_tree_chapter_3}
\frac{1}{\mu_1(x)} \sum_{y \in \mathbb{T}} \omega_0(x,y) \hspace{0.05em} \rho(x,y) = \sum_{\substack{y \in \mathbb{T} \\ y \sim x}} \omega_0(x,y) \hspace{0.05em} \rho(x,y) = \left| \left\{ y \in \mathbb{T} : y \sim x \right\} \right| = \deg(x).
\end{equation}
\noindent Here, the first equality holds since \( \mu_1 \equiv 1 \) in \( \mathbb{T} \), and because \( \omega_0 \) is null on nonadjacent vertices. The second equality, instead, is due to the fact that both \( \omega_0 \) and \( \rho \) assume the value \( 1 \) when evaluated on neighboring vertices.

\medskip
\noindent Applying \cref{remark_partition_X_shells} to the current context yields that the family \( \{ S_r(x_0) \}_{r=0}^{+\infty} \) is a partition of the vertex set \( \mathbb{T} \). Together with the assumption \eqref{assumption_sup_branching_finite}, this fact implies:
\begin{equation}\label{identity_sup_b_r}
\sup_{x \in \mathbb{T}} b(r(x)) = \sup_{r \in \mathbb{N}_0} b(r) < +\infty.
\end{equation}
\noindent In particular, since
\[
\deg(x) =
\begin{cases}
b(0) & \text{ if } x = x_0 \\
b(r(x)) + 1 & \text{ if } x \in \mathbb{T} \setminus \{ x_0 \},
\end{cases}
\]
\noindent then \( \deg(x) \geq 1 \) for all \( x \in \mathbb{T} \), and it also holds:
\[
1 \leq \sup_{x \in \mathbb{T}} \hspace{0.1em} \deg(x) < +\infty.
\]
\noindent By \eqref{passages_degree_model_tree_chapter_3} we then deduce:
\[
\frac{1}{\mu_1(x)} \sum_{y \in \mathbb{T}} \omega_0(x,y) \hspace{0.05em} \rho(x,y) \leq \sup_{x \in \mathbb{T}} \deg(x) \hspace{2.5em} \text{for all } x \in \mathbb{T},
\]
\noindent with \( \sup_{x \in \mathbb{T}} \deg(x) \in [1,+\infty) \). According to \cref{definition_intrinsic}, this means that \( \rho \) is \( 1 \)-intrinsic with bound \( \sup_{x \in \mathbb{T}} \deg(x) \).

\medskip
\noindent In conclusion, under the current hypotheses, the metric \( \rho \) on \( \mathbb{T} \) satisfies property (PM), according to \cref{definition_PM}, and \cref{assumption_graph_pseudo_metric} is respected by the model tree \( (\mathbb{T},\omega_0,\mu_1) \), endowed with \( \rho \).

\medskip
\noindent Now, we recall that in this context the inner degree \( \mathfrak{D}_- \) with respect to the set \( \{x_0\} \) has codomain \( \{0,1\} \), hence
\[
\sup_{x \in \mathbb{T}} \hspace{0.1em} \mathfrak{D}_-(x) = 1 < +\infty.
\]
\noindent Furthermore, the relation \( \mathfrak{D}_+(\cdot) = b(r(\cdot)) \), holding in the whole vertex set \( \mathbb{T} \), yields the following:
\begin{equation}\label{finiteness_sup_D_+_model_trees}
\sup_{x \in \mathbb{T}} \hspace{0.1em} \mathfrak{D}_+(x) = \sup_{x \in \mathbb{T}} b(r(x)) < +\infty,
\end{equation}
\noindent where the last inequality comes from \eqref{identity_sup_b_r}. Therefore, condition a) in \cref{theorem_blow_up_general} is satisfied.

\medskip
\noindent We now investigate the analytic expression of the function \( S : \mathbb{N}_0 \to (0,+\infty) \) introduced in \eqref{function_S}. More specifically, since here \( \mu_1 \equiv 1 \) and \( \Omega = \{ x_0 \} \), for any \( r \in \mathbb{N}_0 \) we obtain:
\begin{equation}\label{expression_S_r_model_trees}
S(r) = \sum_{x \in S_r(x_0)} \mu_1(x) \equiv \sum_{x \in S_r(x_0)} 1 = |S_r(x_0)|.
\end{equation}
\noindent Since \( S_0(x_0) = \{x_0\} \), it holds \( S(0) = |\{x_0\}| = 1 \), while exploiting \eqref{property_0_1_second_way_model_trees} we infer that
\[
S(1) = |S_1(x_0)| = \left| \left\{ y \in \mathbb{T} : y \sim x_0 \right\} \right| = \deg(x_0) = b(0).
\]
\noindent Let us fix for a moment a radius \( r \in \mathbb{N}_0 \). We know that each vertex in \( S_{r+1}(x_0) \) has only one neighbor belonging to the preceding shell \( S_r(x_0) \). Hence there are no two vertices in \( S_r(x_0) \) sharing the same neighbor in \( S_{r+1}(x_0) \). This means that, in order to count the number of vertices belonging to \( S_{r+1}(x_0) \), we simply need to multiply the number of vertices contained in \( S_r(x_0) \) by the number of neighbors that each one of such vertices has in the shell \( S_{r+1}(x_0) \), namely
\[
\mathfrak{D}_+(r) = b(r).
\]
\noindent We deduce the following recurrence relation:
\[
S(r+1) = \left| S_{r+1}(x_0) \right| = b(r) \left| S_r(x_0) \right| = b(r) \hspace{0.05em} S(r),
\]
\noindent where the first and the last equalities are justified by \eqref{expression_S_r_model_trees}. By iterating this identity we get, for all \( r \in \mathbb{N}_0 \):
\[
S(r+1) = S(r) \hspace{0.05em} b(r) = S(r-1) \hspace{0.05em} b(r-1) \hspace{0.05em} b(r) = \cdots = S(1) \prod_{k=1}^{r} b(k) = \prod_{k=0}^{r} b(k),
\]
\noindent where the last equality exploits the identity \( S(1) = b(0) \), established above. In conclusion, the analytical expression for the function \( S \) is
\begin{equation}\label{expression_S_r_model_trees_1}
\begin{aligned}
S(0) & = 1 \\
S(r) & = \prod_{k=0}^{r-1} b(k) \hspace{2.5em} \text{for all } r \in \mathbb{N}.
\end{aligned}
\end{equation}
\noindent We now notice that, by the definition of the supremum and thanks to the assumption \eqref{assumption_sup_branching_finite}, for all \( r \in \mathbb{N}_0 \) it holds:
\[
1 \leq b(r) \leq \sup_{r \in \mathbb{N}_0} b(r) = B < +\infty,
\]
\noindent so that, exploiting the assumption on the constant \( a \), we obtain \( a \in (0,+\infty) \). In addition, we deduce the following estimate:
\[
1 \leq \prod_{k=0}^{r-1} b(k) \leq B^r,
\]
\noindent holding for all \( r \in \mathbb{N} \). Thanks to \eqref{expression_S_r_model_trees_1}, this fact allows us to derive:
\[
1 \leq S(r) \leq B^r, \hspace{2.5em} \text{for all } r \in \mathbb{N}_0,
\]
\noindent which simply corresponds to an identity in the case \( r = 0 \). By exploiting this result, we conclude:
\[
0 < \sum_{r=0}^{+\infty} S(r) \hspace{0.05em} e^{-a \hspace{0.05em} r} \leq \sum_{r=0}^{+\infty} B^r \hspace{0.05em} e^{-a \hspace{0.05em} r} \equiv
\sum_{r=0}^{+\infty} \left[ B \hspace{0.05em} e^{-a} \right]^r.
\]
\noindent We notice that the term at the right-hand side corresponds to a geometric series with common ratio \( B \hspace{0.05em} e^{-a} \). Since by assumption \( a > \log(B) \), it holds:
\[
0 < B \hspace{0.05em} e^{-a} < B \cdot \frac{1}{B} = 1,
\]
\noindent and therefore, by classical arguments, the geometric series converges. By the previous estimates, it follows:
\[
0 < \sum_{r=0}^{+\infty} S(r) \hspace{0.05em} e^{-a \hspace{0.05em} r} < +\infty,
\]
\noindent meaning that also condition b) in \cref{theorem_blow_up_general} is satisfied in the current framework.

\medskip
\noindent In addition, we notice that our assumption on the parameter \( \lambda \) reads as follows:
\[
\lambda \geq B = \sup_{r \in \mathbb{N}_0} b(r) = \sup_{x \in \mathbb{T}} \mathfrak{D}_+(x),
\]
\noindent where the last equality combines \eqref{identity_sup_b_r} and \eqref{finiteness_sup_D_+_model_trees}. Thus also the condition imposed on \( \lambda \) in \cref{corollary_property_B_G_general_graphs_1} is here respected. Therefore, by applying \cref{corollary_property_B_G_general_graphs_1}, we conclude that the function \( \varphi \) defined in \eqref{definition_varphi_barrier_chapter 3} satisfies the property (\(\mathcal{B}_G\)). It is then trivial to verify that this function coincides with the one introduced in the current context, namely in \eqref{definition_varphi_model_trees}. Indeed, by \eqref{expression_S_r_model_trees_1} it follows:
\[
\sum_{r=0}^{+\infty} S(r) \hspace{0.05em} e^{-a \hspace{0.05em} r} = S(0) + \sum_{r=1}^{+\infty} S(r) \hspace{0.05em} e^{-a \hspace{0.05em} r} = 1 + \sum_{r = 1}^{+\infty} \left[ \prod_{k=0}^{r-1} b(k) \right] e^{-a \hspace{0.05em} r}.
\]
\noindent In conclusion, the barrier function in \eqref{definition_varphi_model_trees} satisfies the property (\(\mathcal{B}_G\)), according to \cref{def_TF_G}. This ends the proof. \hfill \( \square \)

\medskip
\noindent \emph{Proof of \emph{\cref{theorem_blow_up_model_trees}}}. This result follows by combining \cref{theorem_blow_up_chapter_3} with \cref{corollary_property_B_G_model_trees}. Indeed, as shown in the proof of \cref{corollary_property_B_G_model_trees}, under our assumptions the model tree \( (\mathbb{T},\omega_0,\mu_1) \) verifies \cref{assumption_graph_pseudo_metric}, and in addition the function \( \varphi \) defined in \eqref{definition_varphi_model_trees} satisfies property (\(\mathcal{B}_G\)).

\medskip
\noindent Furthermore, in the proof of \cref{corollary_property_B_G_model_trees} we showed that \eqref{assumption_sup_branching_finite} implies the following estimate on the degree function:
\[
1 \leq \sup_{x \in \mathbb{T}} \hspace{0.1em} \, \deg(x) < +\infty.
\]
\noindent In particular, from \cref{definition_weighted_degree} and from the fact that \( \mu_1 \equiv 1 \) on \( \mathbb{T} \), it follows that
\[
\operatorname{Deg}(x) = \frac{\deg(x)}{\mu_1(x)} \equiv \deg(x) \hspace{2.5em} \text{for all } x \in \mathbb{T}.
\]
\noindent Therefore, we infer that
\[
\sup_{x \in \mathbb{T}} \operatorname{Deg}(x) = \sup_{x \in \mathbb{T}} \, \deg(x) < +\infty,
\]
\noindent meaning that hypothesis \eqref{assumptions_graph_hypothesis_verified} is here satisfied, and thus by \cref{remark_proposition_blow_up_chapter_3} we deduce that the integrability condition \eqref{assumption_proposition_blow_up_chapter_3} in the statement of \cref{theorem_blow_up_chapter_3} is automatically true.

\medskip
\noindent Finally, it holds:
\[
\sum_{x \in \mathbb{T}} u_0(x) \hspace{0.1em} \varphi(x) \hspace{0.1em} \mu_1(x) \equiv \sum_{x \in \mathbb{T}} u_0(x) \hspace{0.1em} \varphi(x) > s_0(\lambda),
\]
\noindent where we have, again, exploited that \( \mu_1 \equiv 1 \) on \( \mathbb{T} \). In conclusion, all the hypotheses in \cref{theorem_blow_up_chapter_3} are here satisfied, hence it follows that the solution \( u \) is nonglobal. This concludes the proof. \hfill \( \square \)

\medskip
\noindent \emph{Proof of \emph{\cref{theorem_blow_up_homogeneous_trees}}}. In the specific case of homogeneous model trees, the branching function is constant, that is, there exists \( b \in \mathbb{N} \) such that \( b(r) = b \) for all \( r \in \mathbb{N}_0 \). Therefore, it holds:
\[
\sup_{r \in \mathbb{N}_0} b(r) = b < +\infty,
\]
\noindent meaning that condition \eqref{assumption_sup_branching_finite} automatically holds.

\medskip
\noindent Moreover, it is straightforward to observe that the function introduced in \eqref{definition_varphi_homogeneous_trees} coincides with that in \eqref{definition_varphi_model_trees}, in the special case where the branching function is constant. Indeed, in this scenario it holds:
\[
\begin{aligned}
1 + \sum_{r = 1}^{+\infty} \left[ \prod_{k=0}^{r-1} b(k) \right] e^{-a \hspace{0.05em} r} & = 1 + \sum_{r = 1}^{+\infty} \left[ \prod_{k=0}^{r-1} b \right] e^{-a \hspace{0.05em} r} \\
& = 1 + \sum_{r = 1}^{+\infty} b^r \hspace{0.05em} e^{-a \hspace{0.05em} r} \\
& = 1 + \sum_{r = 1}^{+\infty} \left[ b \hspace{0.05em} e^{-a} \right]^r \\
& = \sum_{r = 0}^{+\infty} \left[ b \hspace{0.05em} e^{-a} \right]^r \\
& = \frac{1}{1 - b \hspace{0.05em} e^{-a}},
\end{aligned}
\]
\noindent where we have exploited the fact that, since by assumption \( a > \log(b) \), then
\[
0 < b \hspace{0.05em} e^{-a} < b \cdot \frac{1}{b} = 1,
\]
\noindent and therefore, by classical arguments, the obtained geometric series converges to a known value.

\medskip
\noindent These considerations allow us to apply \cref{theorem_blow_up_model_trees}, yielding the thesis. \hfill \( \square \)


\section{Proof of \cref{theorem_blow_up_lattice_1}}\label{subsection_proofs_4}

\noindent We now proceed and apply Kaplan's method in the context of the integer lattice. We do so by establishing a series of sequential results which will lead to the construction of a barrier function \( \varphi \) satisfying property (\(\mathcal{B}_G\)), according to \cref{def_TF_G}. 

\medskip
We first recall the following result (see, e.g., \cite{Elk}), regarding the theta function introduced in \eqref{theta_function}.

\begin{lemma}\label{lemma_identity_theta_function}
\noindent For each fixed \( N \in \mathbb{N} \), it holds:
\[
\sum_{x \in \mathbb{Z}^N} e^{- k \hspace{0.05em} |x|^2} =
\left[ \sum_{m \in \mathbb{Z}} e^{- k \hspace{0.05em} m^2} \right]^N =
\left[ \theta \left( \frac{k}{\pi} \right) \right]^N, \hspace{2.5em} \text{for all } \hspace{0.1em} k > 0.
\]
\end{lemma}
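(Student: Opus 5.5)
The identity in the statement splits into two equalities, and I would prove them in order. For the first one I would use that $|x|^2=\sum_{i=1}^N x_i^2$ for $x=(x_1,\dots,x_N)\in\mathbb{Z}^N$, so each summand factorizes:
\[
e^{-k|x|^2}=\prod_{i=1}^N e^{-k x_i^2}.
\]
Since every term is positive, Tonelli's theorem for counting measure on $\mathbb{Z}^N=\mathbb{Z}\times\cdots\times\mathbb{Z}$ lets me rearrange the sum freely. I would write it as an iterated sum over $x_1,\dots,x_N$ and factor it into the product $\prod_{i=1}^N \sum_{x_i\in\mathbb{Z}} e^{-k x_i^2}$. This gives $\bigl[\sum_{m\in\mathbb{Z}} e^{-k m^2}\bigr]^N$. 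An alternative is induction on $N$: the case $N=1$ is trivial, and for the step one splits $x=(x',x_N)$ with $x'\in\mathbb{Z}^{N-1}$ and uses $|x|^2=|x'|^2+x_N^2$.

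I also need the one-dimensional series to be finite, so that the manipulation involves no $\infty\cdot 0$ issues and the common value is a positive real number. For this I would compare with a geometric series: since $m^2\ge |m|$ for $m\in\mathbb{Z}$, and $k>0$,
\[
\sum_{m\in\mathbb{Z}} e^{-k m^2}\le 1+2\sum_{m\ge1} e^{-k m}<+\infty .
\]
Positivity is clear from the $m=0$ term.

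The second equality is purely definitional. By \eqref{theta_function}, $\theta(k/\pi)=\sum_{m\in\mathbb{Z}} e^{-\pi m^2 (k/\pi)}=\sum_{m\in\mathbb{Z}} e^{-k m^2}$, and $k/\pi\in(0,+\infty)$ lies in the domain of $\theta$.

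There is no genuine obstacle here. The only point requiring care is justifying the exchange of the $N$-fold sum and the product, which is handled by nonnegativity of the terms (Tonelli) together with the convergence of the one-dimensional series shown above.
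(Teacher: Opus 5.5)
Your proof is correct and complete. The paper gives no proof of this lemma; it recalls it with a pointer to Elkies' notes on theta functions of lattices. So the comparison is between your self-contained argument and a citation, and what your route buys is self-containedness. It shows the lemma is elementary: the first equality is just the factorization $e^{-k|x|^2}=\prod_{i=1}^N e^{-kx_i^2}$ plus Tonelli for nonnegative terms under counting measure, and the second is the substitution $s=k/\pi$ in \eqref{theta_function}. Nothing from the deeper theory of theta functions, such as Poisson summation or the modular transformation law, is needed. One small point: since all terms are strictly positive, the product identity already holds in $[0,+\infty]$ before you know finiteness, so there are no $\infty\cdot 0$ issues to avoid. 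Your geometric comparison $\sum_{m\in\mathbb{Z}} e^{-km^2}\le 1+2\sum_{m\ge 1}e^{-km}$ is still worth keeping, because it justifies a fact the paper uses without comment later, in \cref{lemma_ell_1_lattice} and \cref{corollary_varphi_property_B_G_lattice}. That fact is that $\theta$ maps $(0,+\infty)$ into itself, which is exactly what makes the normalizing constant $C$ in \eqref{definition_varphi_lattice} finite and positive.
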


\begin{lemma}\label{lemma_assumption_3_1_lattice}
\noindent For each \( N \in \mathbb{N} \), the integer lattice \( (\mathbb{Z}^N,\omega_0,\mu) \) satisfies \emph{\cref{assumption_graph_pseudo_metric}}.
\end{lemma}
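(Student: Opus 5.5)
We have to verify that $(\mathbb{Z}^N,\omega_0,\mu)$, endowed with the Euclidean distance $d$ of \cref{definition_euclidean_distance_lattice}, is an infinite connected weighted graph and that $d$ satisfies property (PM) of \cref{definition_PM}. The graph-theoretic part is immediate. $\mathbb{Z}^N$ is countably infinite and $\mu\equiv 2N>0$. The weight $\omega_0$ vanishes on the diagonal and is symmetric, since adjacency in \cref{definition_lattice} b) is a symmetric relation that excludes $y=x$. Each vertex has exactly $2N$ neighbours $x\pm e_k$, so $\sum_y\omega_0(x,y)=2N<+\infty$. For connectedness, given $x,y\in\mathbb{Z}^N$ we change one coordinate at a time by unit steps; this produces a path of length $\sum_i|x_i-y_i|$ from $x$ to $y$. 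That $d$ is a metric, hence a pseudo metric, is standard.

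Next come the three conditions of (PM). For a) we use \cref{lemma_characterization_neighbors_lattice}: $x\sim y$ if and only if $d(x,y)=1$. The set over which the supremum defining $s$ is taken is nonempty and consists only of the value $1$, so $s=1\in(0,+\infty)$. For b), fix $x\in\mathbb{Z}^N$ and $r>0$. Any $y\in B_r(x)$ satisfies $|y_i-x_i|\le d(x,y)<r$ for every $i$, so $B_r(x)$ is contained in the integer box $\prod_{i=1}^N\{z\in\mathbb{Z}:|z-x_i|<r\}$, which has at most $(2\lceil r\rceil+1)^N$ points and is therefore finite. For c), again by \cref{lemma_characterization_neighbors_lattice},
\[
\frac{1}{\mu(x)}\sum_{y\in\mathbb{Z}^N}\omega_0(x,y)\,d(x,y)=\frac{1}{2N}\,\bigl|\{y:y\sim x\}\bigr|=\frac{2N}{2N}=1,
\]
so $d$ is $1$--intrinsic with bound $C_0=1$. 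The same computation gives the $2$--intrinsic bound as well.

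None of these steps poses a real obstacle. The only points that need a little care are using \cref{lemma_characterization_neighbors_lattice} to identify the jump size exactly, and writing the box containment in b) so that finiteness of balls is explicit.
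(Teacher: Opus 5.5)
Your proof is correct and follows the paper's argument. The paper likewise calls conditions a) and b) of (PM) clear and verifies c) by the same computation $\frac{1}{2N}\sum_{y\sim x} d(x,y)=1$, which gives $C_0=1$. You additionally spell out details the paper leaves implicit: $s=1$ via \cref{lemma_characterization_neighbors_lattice}, finiteness of balls via the box containment, and the basic graph axioms.
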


\noindent \emph{Proof.} Clearly, \cref{definition_PM} (a)-(b) are fulfilled. Let us show that also \cref{definition_PM} (c) holds. To this aim, for any vertex \( x \in \mathbb{Z}^N \) we compute:
\[
\sum_{y \in \mathbb{Z}^N} \omega_0(x,y) \hspace{0.05em} d(x,y) = \sum_{\substack{y \in \mathbb{Z}^N \\ y \sim x}} \omega_0(x,y) \hspace{0.05em} d(x,y) = \sum_{\substack{y \in \mathbb{Z}^N \\ y \sim x}} 1 = \left| \left\{ y \in \mathbb{Z}^N : y \sim x \right\} \right| = \deg(x) = 2 \hspace{0.05em} N.
\]
\noindent Here, the first equality holds since \( \omega_0 \) is null on nonadjacent vertices, while the second one is due to the fact that both \( \omega_0 \) and \( d \) assume the value \( 1 \) when evaluated on neighboring vertices. Since in this framework we have \( \mu \equiv 2 \hspace{0.05em} N \) in \( \mathbb{Z}^N \), then it holds:
\[
\frac{1}{\mu(x)} \sum_{y \in \mathbb{Z}^N} \omega_0(x,y) \hspace{0.05em} d(x,y) = 1, \hspace{1.8em} \text{for all } x \in \mathbb{Z}^N.
\]
\noindent According to \cref{definition_intrinsic}, this means that \( d \) is \( 1 \)-intrinsic with bound \( C_0 = 1 \). Therefore, also condition c) in \cref{definition_PM} is here satisfied.

\medskip
\noindent In conclusion, the Euclidean metric \( d \) satisfies property (PM), according to \cref{definition_PM}, and thus all the conditions in \cref{assumption_graph_pseudo_metric} are satisfied. This ends the proof. \hfill \( \square \)

\begin{lemma}\label{lemma_ell_1_lattice}
\noindent For any \( k > 0 \), the function \( \psi : \mathbb{Z}^N \to (0,+\infty) \) defined as
\begin{equation}\label{definition_psi_lattice}
\psi(x) := e^{-k \hspace{0.05em} |x|^2} \hspace{2.5em} \text{for all } \hspace{0.1em} x \text{ in } \mathbb{Z}^N
\end{equation}
\noindent belongs to \( \ell^1(\mathbb{Z}^N,\mu) \) and
\[\|\psi\|_1=2 \hspace{0.03em} N \left[ \theta \hspace{-0.25em} \left( \frac{k}{\pi} \right) \right]^N.\]
\end{lemma}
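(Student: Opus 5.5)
The proof is a direct computation based on \cref{lemma_identity_theta_function}. First, $\psi$ is strictly positive, so $|\psi|=\psi$ and the $\ell^1$ norm is a series of nonnegative terms. Since the node measure on the lattice is constant, $\mu\equiv 2N$ by \cref{definition_lattice}, we can factor it out:
\[
\|\psi\|_1=\sum_{x\in\mathbb{Z}^N}|\psi(x)|\,\mu(x)=2N\sum_{x\in\mathbb{Z}^N}e^{-k|x|^2}.
\]

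Next we evaluate the remaining series. Write $|x|^2=\sum_{i=1}^N x_i^2$, so that $e^{-k|x|^2}=\prod_{i=1}^N e^{-k x_i^2}$. The summand is nonnegative, so Tonelli's theorem for counting measure on $\mathbb{Z}^N=\mathbb{Z}\times\cdots\times\mathbb{Z}$ allows us to split the sum into a product of $N$ one-dimensional sums. This gives $\bigl[\sum_{m\in\mathbb{Z}}e^{-km^2}\bigr]^N$, which is exactly the content of \cref{lemma_identity_theta_function}, so we may simply invoke it. The one-dimensional series converges, since $e^{-km^2}\le e^{-k|m|}$ for $m\in\mathbb{Z}$ and the latter is a pair of geometric series. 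Finally, by \eqref{theta_function} with $s=k/\pi$,
\[
\sum_{m\in\mathbb{Z}}e^{-km^2}=\sum_{m\in\mathbb{Z}}e^{-\pi m^2 (k/\pi)}=\theta(k/\pi).
\]

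Putting these together gives $\|\psi\|_1=2N[\theta(k/\pi)]^N<+\infty$. This shows both that $\psi\in\ell^1(\mathbb{Z}^N,\mu)$ and the stated value of the norm.

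There is no real obstacle here. The only point needing justification is the rearrangement of the multiple series into a product, and that is legitimate because all terms are positive, which is already covered by the cited lemma.
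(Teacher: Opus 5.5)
Your proposal is correct and follows essentially the same route as the paper: use $\psi>0$ and $\mu\equiv 2N$ to reduce $\|\psi\|_1$ to $2N\sum_{x\in\mathbb{Z}^N}e^{-k|x|^2}$, then apply \cref{lemma_identity_theta_function}. Your extra remarks (Tonelli for the product splitting and the bound $e^{-km^2}\le e^{-k|m|}$ for convergence) are valid justifications that the paper leaves to the cited lemma.
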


\noindent \emph{Proof.} We first notice that it is immediate to verify that the function \( \psi \) maps \( \mathbb{Z}^N \) into \( (0,+\infty) \). Now, let us fix an arbitrary \( k > 0 \). Then \cref{lemma_identity_theta_function} ensures that
\[
\sum_{x \in \mathbb{Z}^N} e^{- k \hspace{0.05em} |x|^2} = \left[ \theta \left( \frac{k}{\pi} \right) \right]^N < +\infty,
\]
\noindent where the last inequality comes from the fact that the function \( \theta \) maps \( (0,+\infty) \) into itself. In particular, from the definition of \( \psi \) given in \eqref{definition_psi_lattice} it follows that
\[
\| \psi \|_1 = \sum_{x \in \mathbb{Z}^N} \left| \psi(x) \right| \hspace{0.05em} \mu(x) \equiv \sum_{x \in \mathbb{Z}^N} \psi(x) \hspace{0.05em} \mu(x) = 2 \hspace{0.02em} N \sum_{x \in \mathbb{Z}^N} e^{-k \hspace{0.05em} |x|^2} < +\infty,
\]
\noindent where we have exploited the facts that \( \psi \) is strictly positive, and that \( \mu \equiv 2N \) in \( \mathbb{Z}^N \). This concludes the proof. \hfill \( \square \)

\begin{lemma}\label{lemma_double_sum_lattice}
\noindent For any \( k > 0 \), the function \( \psi \) defined in \eqref{definition_psi_lattice} satisfies
\[
\sum_{x,y \in \mathbb{Z}^N} \omega_0(x,y) \hspace{0.1em} | \nabla_{xy} \psi | < +\infty.
\]
\end{lemma}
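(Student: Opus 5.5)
We must show that $\sum_{x,y}\omega_0(x,y)|\nabla_{xy}\psi|<\infty$ for $\psi(x)=e^{-k|x|^2}$ on $\mathbb{Z}^N$. The plan is to bound each edge difference by the values of $\psi$ at its endpoints and then reduce the whole sum to $\|\psi\|_1$, which is finite by \cref{lemma_ell_1_lattice}.

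Since $\omega_0$ vanishes off edges and equals $1$ on edges, we can write
\[
\sum_{x,y \in \mathbb{Z}^N} \omega_0(x,y)\, |\nabla_{xy}\psi| = \sum_{x \in \mathbb{Z}^N} \sum_{y \sim x} |\psi(y)-\psi(x)|.
\]
Because $\psi>0$, the triangle inequality gives $|\psi(y)-\psi(x)| \le \psi(x)+\psi(y)$. Hence the double sum is at most
\[
\sum_{x}\sum_{y\sim x}\psi(x) + \sum_{x}\sum_{y\sim x}\psi(y).
\]

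We then count how often each vertex appears. Every vertex has exactly $2N$ neighbours, by \cref{lemma_characterization_neighbors_lattice} and \cref{definition_lattice}. The first term is therefore $2N\sum_x\psi(x)$. In the second term, adjacency is symmetric and all terms are nonnegative, so Tonelli lets us swap the order of summation; it then also equals $2N\sum_y\psi(y)$.

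Together this gives the bound $4N\sum_x e^{-k|x|^2}$. By \cref{lemma_identity_theta_function} this equals $4N[\theta(k/\pi)]^N$, which is finite; equivalently it is $2\|\psi\|_1$ by \cref{lemma_ell_1_lattice}.

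No step is a real obstacle. The only point needing care is the justification of the interchange of summations in the second term, which is covered by nonnegativity (Tonelli).
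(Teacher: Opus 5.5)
Your proof is correct and follows the paper's argument essentially step by step. Like the paper, you bound $|\nabla_{xy}\psi|\le\psi(x)+\psi(y)$, use the symmetry of $\omega_0$ to identify the two resulting sums, count the $2N$ neighbours of each vertex, and apply \cref{lemma_identity_theta_function} to obtain the bound $4N[\theta(k/\pi)]^N$ (your observation that this equals $2\|\psi\|_1$ and your explicit appeal to Tonelli are consistent with the paper's relabeling of summation variables).
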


\noindent \emph{Proof.} After fixing an arbitrary \( k > 0 \), we first employ the triangular inequality, together with the fact that the function \( \psi \) is strictly positive, in order to infer the following estimate:
\[
\sum_{x,y \in \mathbb{Z}^N} \omega_0(x,y) \hspace{0.1em} | \nabla_{xy} \psi | \equiv \sum_{x,y \in \mathbb{Z}^N} \omega_0(x,y) \hspace{0.1em} | \psi(y) - \psi(x) |
\leq \sum_{x,y \in \mathbb{Z}^N} \omega_0(x,y) \hspace{0.05em} \left[ \psi(y) + \psi(x) \right].
\]
\noindent We now claim the validity of the following identity:
\begin{equation}\label{claim_identity_psi_lattice}
\sum_{x,y \in \mathbb{Z}^N} \omega_0(x,y) \hspace{0.05em} \left[ \psi(y) + \psi(x) \right] = 4 \hspace{0.02em} N \left[ \theta \left( \frac{k}{\pi} \right) \right]^N.
\end{equation}
\noindent Notice that combining this identity with the previous estimate yields:
\[
\sum_{x,y \in \mathbb{Z}^N} \omega_0(x,y) \hspace{0.1em} | \nabla_{xy} \psi | \leq 4 \hspace{0.02em} N \left[ \theta \left( \frac{k}{\pi} \right) \right]^N < +\infty,
\]
\noindent where the last inequality holds since \( N \in \mathbb{N} \) is clearly fixed, and thanks to the fact that the function \( \theta \) maps \( (0,+\infty) \) into itself. Therefore, proving that \eqref{claim_identity_psi_lattice} holds would yield the thesis.

\medskip
\noindent We are then left with showing the validity of identity \eqref{claim_identity_psi_lattice}. First, we can write:
\[
\sum_{x,y \in \mathbb{Z}^N} \omega_0(x,y) \hspace{0.1em} \psi(y) = \sum_{x,y \in \mathbb{Z}^N} \omega_0(y,x) \hspace{0.1em} \psi(x) = \sum_{x,y \in \mathbb{Z}^N} \omega_0(x,y) \hspace{0.1em} \psi(x),
\]
\noindent where we have exchanged the role of the summation variables and exploited the symmetry of the edge weight \( \omega_0 \). This allows us to infer the following identity:
\begin{equation}\label{identity_lemma_double_sum_lattice_1}
\begin{aligned}
\sum_{x,y \in \mathbb{Z}^N} \omega_0(x,y) \hspace{0.05em} \left[ \psi(y) + \psi(x) \right] & = \sum_{x,y \in \mathbb{Z}^N} \omega_0(x,y) \hspace{0.05em} \psi(y) + \sum_{x,y \in \mathbb{Z}^N} \omega_0(x,y) \hspace{0.05em} \psi(x) \\
& = 2 \sum_{x,y \in \mathbb{Z}^N} \omega_0(x,y) \hspace{0.05em} \psi(x).
\end{aligned}
\end{equation}
\noindent We can now rewrite the term at the right-hand side of \eqref{identity_lemma_double_sum_lattice_1} as follows:
\[
\sum_{x,y \in \mathbb{Z}^N} \omega_0(x,y) \hspace{0.05em} \psi(x) \equiv \sum_{x \in \mathbb{Z}^N} \sum_{y \in \mathbb{Z}^N} \omega_0(x,y) \hspace{0.05em} \psi(x) = \sum_{x \in \mathbb{Z}^N} \psi(x) \left[ \sum_{y \in \mathbb{Z}^N} \omega_0(x,y) \right] = 2N \sum_{x \in \mathbb{Z}^N} \psi(x),
\]
\noindent where, in the last equality, we have exploited the identity
\[
\sum_{y \in \mathbb{Z}^N} \omega_0(x,y) = \left| \{ y \in \mathbb{Z}^N : y \sim x \} \right| = 2N, \hspace{2.0em} \text{for all } x \in \mathbb{Z}^N.
\]
\noindent Therefore, thanks to the definition \eqref{definition_psi_lattice} of the function \( \psi \), we obtain:
\[
\sum_{x,y \in \mathbb{Z}^N} \omega_0(x,y) \hspace{0.05em} \psi(x) = 2 \hspace{0.02em} N \sum_{x \in \mathbb{Z}^N} \psi(x)
= 2 \hspace{0.02em} N \sum_{x \in \mathbb{Z}^N} e^{- k \hspace{0.05em} |x|^2} = 2 \hspace{0.02em} N \left[ \theta \left( \frac{k}{\pi} \right) \right]^N,
\]
\noindent where the last equality is due to the result established in \cref{lemma_identity_theta_function}. Combining the obtained identity with \eqref{identity_lemma_double_sum_lattice_1} yields:
\[
\sum_{x,y \in \mathbb{Z}^N} \omega_0(x,y) \hspace{0.05em} \left[ \psi(y) + \psi(x) \right] = 2 \sum_{x,y \in \mathbb{Z}^N} \omega_0(x,y) \hspace{0.05em} \psi(x)
= 4 \hspace{0.02em} N \left[ \theta \left( \frac{k}{\pi} \right) \right]^N.
\]
\noindent In conclusion, identity \eqref{claim_identity_psi_lattice} has been shown, and the thesis follows, as we have already discussed. \hfill \( \square \)

\begin{lemma}\label{lemma_Laplacian_lambda_lattice}
\noindent For any \( k > 0 \), the function \( \psi \) defined in \eqref{definition_psi_lattice} satisfies the following inequality:
\begin{equation}\label{inequality_psi_lattice}
\Delta \psi(x) + \lambda \psi(x) \geq 0 \hspace{2.5em} \text{for all } \hspace{0.1em} x \text{ in } \mathbb{Z}^N,
\end{equation}
\noindent provided that
\[
\lambda \geq 1 - e^{-k}.
\]
\noindent In particular, \eqref{inequality_psi_lattice} holds if \( \lambda \geq 2 \hspace{0.015em} k \hspace{0.015em} N \).
\end{lemma}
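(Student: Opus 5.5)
The plan is to compute $\Delta\psi$ explicitly, using the structure of the lattice given in \cref{definition_lattice} together with \cref{lemma_characterization_neighbors_lattice}. Since $\mu\equiv 2N$ and $\omega_0(x,y)=1$ exactly when $y=x\pm e_i$ for some $i\in\{1,\dots,N\}$ (where $e_i$ is the $i$-th unit vector), the Laplacian reads
\[
\Delta\psi(x)=\frac{1}{2N}\sum_{i=1}^N\bigl[\psi(x+e_i)+\psi(x-e_i)-2\psi(x)\bigr].
\]

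The key step is the identity $|x\pm e_i|^2=|x|^2\pm 2x_i+1$. It gives $\psi(x\pm e_i)=\psi(x)\,e^{-k}e^{\mp 2kx_i}$, and therefore
\[
\psi(x+e_i)+\psi(x-e_i)=2\psi(x)\,e^{-k}\cosh(2kx_i)\ge 2\psi(x)\,e^{-k},
\]
since $\cosh\ge 1$. Summing over $i$ and dividing by $2N$ yields the pointwise lower bound
\[
\Delta\psi(x)\ge \psi(x)\bigl(e^{-k}-1\bigr)\qquad\text{for all } x\in\mathbb{Z}^N .
\]
From this, $\Delta\psi+\lambda\psi\ge \psi\,[\lambda-(1-e^{-k})]$. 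Because $\psi>0$, the right-hand side is nonnegative whenever $\lambda\ge 1-e^{-k}$, which proves \eqref{inequality_psi_lattice}.

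For the final claim, I will use the elementary inequality $1-e^{-k}\le k$, valid for $k>0$ (by convexity of the exponential, or the mean value theorem). Together with $N\ge1$ it gives $1-e^{-k}\le k\le 2kN$, so $\lambda\ge 2kN$ implies $\lambda\ge 1-e^{-k}$.

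There is no real obstacle here. The only points needing care are the correct bookkeeping of the normalization $\mu\equiv 2N$ against the $2N$ neighbours, and the observation that dropping the $\cosh$ factor loses nothing essential: the bound is attained at $x=0$, so the threshold $1-e^{-k}$ is sharp.
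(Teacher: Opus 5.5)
Your proposal is correct and follows the paper's proof essentially step by step. Like the paper, you compute $\Delta\psi$ using $\mu\equiv 2N$, apply $|x\pm e_i|^2=|x|^2\pm 2x_i+1$ to obtain the $\cosh(2kx_i)$ factor, bound it below by $1$, and then use $1-e^{-k}\le k\le 2kN$. Your extra remark that equality holds at $x=0$, so the threshold $1-e^{-k}$ is sharp, is also correct.
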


\noindent \emph{Proof.} After fixing an arbitrary \( k > 0 \), together with a generic vertex \( x \in \mathbb{Z}^N \), we first aim to rewrite the Laplacian of the function \( \psi \). First, by the definition of both the edge weight \( \omega_0 \) and the node measure \( \mu \), we infer:
\begin{equation}\label{identity_Laplacian_psi_lattice_1}
\begin{aligned}
\Delta \psi(x) & = \frac{1}{\mu(x)} \sum_{y \in \mathbb{Z}^N} \omega_0(x,y) \left[ \psi(y) - \psi(x) \right] \\
& = \frac{1}{2 \hspace{0.02em} N} \sum_{\substack{y \in \mathbb{Z}^N \\ y \sim x}} \left[ \psi(y) - \psi(x) \right] \\
& = \frac{1}{2 \hspace{0.02em} N} \sum_{\substack{y \in \mathbb{Z}^N \\ y \sim x}} \psi(y) - \frac{1}{2 \hspace{0.02em} N} \sum_{\substack{y \in \mathbb{Z}^N \\ y \sim x}} \psi(x) \\
& = \frac{1}{2 \hspace{0.02em} N} \sum_{\substack{y \in \mathbb{Z}^N \\ y \sim x}} \psi(y) - \frac{1}{2 \hspace{0.02em} N} \hspace{0.02em} \psi(x) \sum_{\substack{y \in \mathbb{Z}^N \\ y \sim x}} 1 \\
& = \frac{1}{2 \hspace{0.02em} N} \sum_{\substack{y \in \mathbb{Z}^N \\ y \sim x}} \psi(y) - \psi(x),
\end{aligned}
\end{equation}
\noindent where in the last equality we exploited the identity
\[
\sum_{\substack{y \in \mathbb{Z}^N \\ y \sim x}} 1 = \left| \{ y \in \mathbb{Z}^N : y \sim x \} \right| = 2N.
\]
\noindent Concerning the first term at the right-hand side of \eqref{identity_Laplacian_psi_lattice_1}, we notice that the following identity holds:
\begin{equation}\label{identity_Laplacian_psi_lattice_2}
\sum_{\substack{y \in \mathbb{Z}^N \\ y \sim x}} \psi(y) = \sum_{i=1}^{N} \psi(x - e_i) + \sum_{i=1}^{N} \psi(x + e_i) = \sum_{i=1}^{N} \left[ \psi(x - e_i) + \psi(x + e_i) \right],
\end{equation}
where \( \{ e_i \}_{i=1}^{N} \subset \mathbb{Z}^N \) represents the family of the \emph{canonic vectors}, satisfying, for each \( i,j \in \left\{ 1, 2, \dots, N \right\} \), the definition \( \left( e_i \right)_j := \delta_{ij} \), with \( \delta \) denoting the Kronecker delta. In particular, identity \eqref{identity_Laplacian_psi_lattice_2} can be justified by the fact that the neighbor set of the vertex \( x \) can be written as a disjoint union, namely
\[
\{ y \in \mathbb{Z}^N : y \sim x \} = \bigcup_{i=1}^N \{ x - e_i, x + e_i \} = \left[ \bigcup_{i=1}^N \{ x - e_i \} \right] \cup \left[ \bigcup_{i=1}^N \{ x + e_i \} \right].
\]

\noindent Now, for each \( i = 1, \dots, N \) it holds:
\[
\begin{aligned}
\left| x \pm e_i \right|^2 & = \sum_{j=1}^{N} \left[x_j \pm (e_i)_j \right]^2 \\
& = \sum_{j=1}^{N} \left[ x_j \pm \delta_{ij} \right]^2 \\
& = \sum_{j=1}^{N} \left[ x_j^2 \pm 2 \hspace{0.02em} x_j \hspace{0.02em} \delta_{ij} + \delta_{ij}^2 \right] \\
& = \sum_{j=1}^{N} x_j^2 \pm 2 \sum_{j=1}^{N} x_j \hspace{0.02em} \delta_{ij} + \sum_{j=1}^{N} \delta_{ij}^2 \\
& = |x|^2 \pm 2x_i + 1,
\end{aligned}
\]
\noindent where we have simply exploited the fact that \( (e_i)_j = \delta_{ij} \), together with the definition of both the Kronecker delta and the norm \( |\cdot| \). Therefore, using the definition \eqref{definition_psi_lattice} of the function \( \psi \), we obtain, for all \( i = 1, \dots, N \):
\[
\psi(x \pm e_i) = e^{- k \hspace{0.02em} \left| x \pm e_i \right|^2 } = e^{- k \hspace{0.02em} \left( |x|^2 \pm 2x_i + 1 \right) }
= e^{- k \hspace{0.02em} |x|^2} \hspace{0.05em} e^{\mp 2k x_i} \hspace{0.05em} e^{-k}
= \psi(x) \hspace{0.05em} e^{\mp 2k x_i} \hspace{0.05em} e^{-k}.
\]
\noindent In particular, for any \( i = 1, \dots, N \) we get:
\[
\begin{aligned}
\psi(x - e_i) + \psi(x + e_i) & = \psi(x) \hspace{0.05em} e^{2k x_i} \hspace{0.05em} e^{-k} + \psi(x) \hspace{0.05em} e^{- 2k x_i} \hspace{0.05em} e^{-k} \\
& = \psi(x) \hspace{0.05em} e^{-k} \left[ e^{2k x_i} + e^{- 2k x_i} \right] \\
& = 2 \hspace{0.02em} \psi(x) \hspace{0.05em} e^{-k} \hspace{0.05em} \cosh(2k x_i),
\end{aligned}
\]
\noindent where in the last equality we have exploited the definition of the hyperbolic cosine function. Thus \eqref{identity_Laplacian_psi_lattice_2} yields the following:
\[
\sum_{\substack{y \in \mathbb{Z}^N \\ y \sim x}} \psi(y) = \sum_{i=1}^{N} \left[ \psi(x - e_i) + \psi(x + e_i) \right]
= 2 \hspace{0.02em} \psi(x) \hspace{0.05em} e^{-k} \sum_{i=1}^{N} \hspace{0.05em} \cosh(2k x_i).
\]
\noindent By plugging the obtained identity into \eqref{identity_Laplacian_psi_lattice_1}, we deduce:
\begin{equation}\label{identity_Laplacian_psi_lattice_4}
\begin{aligned}
\Delta \psi(x) & = \frac{1}{2 \hspace{0.02em} N} \sum_{\substack{y \in \mathbb{Z}^N \\ y \sim x}} \psi(y) - \psi(x) \\
& = \frac{1}{N} \hspace{0.02em} \psi(x) \hspace{0.05em} e^{-k} \sum_{i=1}^{N} \hspace{0.05em} \cosh(2k x_i) - \psi(x) \\
& = \psi(x) \left[ \frac{e^{-k}}{N} \sum_{i=1}^{N} \hspace{0.05em} \cosh(2k x_i) - 1 \right].
\end{aligned}
\end{equation}
\noindent Now, since \( \cosh(z) \geq 1 \) for all \( z \in \mathbb{R} \), it easily follows that
\[
\sum_{i=1}^{N} \hspace{0.05em} \cosh(2k x_i) \geq N,
\]
\noindent and by making use of identity \eqref{identity_Laplacian_psi_lattice_4} we get the following inequality:
\[
\Delta \psi(x) \geq \left[ e^{-k} - 1 \right] \psi(x),
\]
\noindent where we have also exploited the fact that \( \psi \) is strictly positive over \( \mathbb{Z}^N \).

\medskip
\noindent Thanks to the arbitrariness of the vertex \( x \), the previous inequality holds pointwise in \( \mathbb{Z}^N \). Thus, for any constant \( \lambda > 0 \), the following holds:
\[
\Delta \psi(x) + \lambda \psi(x) \geq \left[ e^{-k} - 1 + \lambda \right] \psi(x), \hspace{2.0em}  \text{for all } x \in \mathbb{Z}^N.
\]
\noindent Since, again, the function \( \psi \) is strictly positive, it is now immediate to conclude that the desired inequality, namely \eqref{inequality_psi_lattice}, holds if we choose \( \lambda \geq 1 - e^{-k} \). The arbitrariness of \( k > 0 \) then concludes the proof of the first part of the statement.

\medskip
\noindent Finally, since \( N \in \mathbb{N} \), then it trivially follows that \( 2N > 1 \). This fact, together with the well-known estimate
\[
k > 1 - e^{-k} \hspace{2.0em} \text{for all } k > 0,
\]
\noindent allows us to conclude that
\[
2kN > k > 1 - e^{-k}, \hspace{2.0em} \text{for all } k > 0.
\]
\noindent Therefore, by choosing \( \lambda \) such that \( \lambda \geq 2kN > 0 \), in particular it holds \( \lambda \geq 1 - e^{-k} \), and the desired inequality \eqref{inequality_psi_lattice} follows, as discussed above. This yields the thesis. \hfill \( \square \)

\begin{corollary}\label{corollary_varphi_property_B_G_lattice}
\noindent For any \( k > 0 \), the function \( \varphi : \mathbb{Z}^N \to (0,+\infty) \) defined as
\begin{equation}\label{definition_varphi_lattice}
\varphi(x) := C \hspace{0.05em} e^{-k \hspace{0.05em} |x|^2} \hspace{1.0em} \text{for all } \hspace{0.1em} x \in \mathbb{Z}^N, \hspace{1.0em} \text{where} \hspace{1.0em} C := \left\{ 2 \hspace{0.03em} N \left[ \theta \hspace{-0.25em} \left( \frac{k}{\pi} \right) \right]^N \right\}^{-1},
\end{equation}
\noindent satisfies the property \emph{(\(\mathcal{B}_G\))}, provided that
\[
\lambda \geq 1 - e^{-k}.
\]
\noindent In particular, the same result holds if \( \lambda \geq 2 \hspace{0.03em} k \hspace{0.03em} N \).
\end{corollary}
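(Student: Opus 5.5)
We plan to verify the four conditions of property (\(\mathcal{B}_G\)) in \cref{def_TF_G} for \( \varphi = C\psi \), where \( \psi(x) = e^{-k|x|^2} \) is the function in \eqref{definition_psi_lattice}. Throughout, the lattice is endowed with the Euclidean distance \( d \), which by \cref{lemma_assumption_3_1_lattice} satisfies \cref{assumption_graph_pseudo_metric}, so that the balls \( B_R \) and the jump size \( s=1 \) entering condition c) are well defined. The constant \( C \) is strictly positive and finite, since \( \theta \) maps \( (0,+\infty) \) into itself. Hence \( \varphi>0 \) on \( \mathbb{Z}^N \), because \( \psi>0 \), and condition a) follows.

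For condition b), we invoke \cref{lemma_ell_1_lattice}, which gives \( \|\psi\|_1 = 2N[\theta(k/\pi)]^N \). By linearity and positivity,
\[
\|\varphi\|_1 = C\,\|\psi\|_1 = 1 .
\]
This is precisely why \( C \) is chosen as the reciprocal of \( \|\psi\|_1 \).

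For condition c), we use \cref{lemma_double_sum_lattice}, which shows that the full double sum \( \sum_{x,y}\omega_0(x,y)|\nabla_{xy}\psi| \) is finite and in fact bounded by \( 4N[\theta(k/\pi)]^N \). Multiplying by \( C \), we obtain
\[
\sum_{x,y}\omega_0(x,y)\,|\nabla_{xy}\varphi| \le 2 .
\]
Restricting the summation to pairs with \( x\in \overline{B}_R\setminus B_{(1-\delta)R-2s} \) can only decrease a sum of nonnegative terms. Therefore condition c) holds with \( C_1 = 2 \), uniformly in \( R>0 \) and \( \delta\in(0,1) \).

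For condition d), \cref{lemma_Laplacian_lambda_lattice} gives \( \Delta\psi+\lambda\psi\ge 0 \) on \( \mathbb{Z}^N \) whenever \( \lambda\ge 1-e^{-k} \). Since the Laplacian is linear and \( C>0 \), we get \( \Delta\varphi+\lambda\varphi = C(\Delta\psi+\lambda\psi)\ge0 \). The final claim, for \( \lambda\ge 2kN \), follows because the same lemma shows \( 2kN > 1-e^{-k} \).

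No step presents a genuine obstacle, since all the estimates are already contained in the preceding lemmas. The only points requiring care are the normalization, namely that \( \mu\equiv 2N \) appears in \( \|\psi\|_1 \), and the observation that the bound in condition c) is independent of \( R \) and \( \delta \).
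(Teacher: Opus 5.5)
Your proposal is correct and follows the paper's proof essentially step by step. You use positivity of $C$ for a), the normalization $\|\varphi\|_1=C\|\psi\|_1=1$ from \cref{lemma_ell_1_lattice} for b), restriction of the globally finite double sum from \cref{lemma_double_sum_lattice} for c), and linearity together with \cref{lemma_Laplacian_lambda_lattice} for d); your explicit value $C_1=C\cdot 4N[\theta(k/\pi)]^N=2$ is correct, whereas the paper only records that the sum is finite.
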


\noindent \emph{Proof.} After fixing an arbitrary \( k > 0 \), we first notice that \( C \in (0,+\infty) \). This is due to both the facts that \( N \in \mathbb{N} \) and that the function \( \theta \) maps \( (0,+\infty) \) into itself. Therefore, \( \varphi \) actually maps \( \mathbb{Z}^N \) into \( (0,+\infty) \), which corresponds to condition a) of property (\(\mathcal{B}_G\)), according to \cref{def_TF_G}.

\medskip
\noindent Moreover, the following relation holds between \( \varphi \) and the function \( \psi \) defined in \eqref{definition_psi_lattice}:
\begin{equation}\label{relation_varphi_psi_lattice}
\varphi(x) = C \hspace{0.1em} \psi(x), \hspace{1.0em} \text{for all } x \in \mathbb{Z}^N.
\end{equation}
\noindent Now, in the proof of \cref{lemma_ell_1_lattice} the function \( \psi \) has already been proved to belong to \( \ell^1(\mathbb{Z}^N,\mu) \). Then, thanks to \eqref{relation_varphi_psi_lattice} we infer that \( \varphi \in \ell^1(\mathbb{Z}^N,\mu) \). Moreover, it holds:
\[
\| \varphi \|_1 = \sum_{x \in \mathbb{Z}^N} \varphi(x) \hspace{0.02em} \mu(x) = 2 \hspace{0.01em} N \hspace{0.01em} C \sum_{x \in \mathbb{Z}^N} e^{-k \hspace{0.05em} |x|^2}
= 2 \hspace{0.01em} N \hspace{0.01em} C \left[ \theta \hspace{-0.25em} \left( \frac{k}{\pi} \right) \right]^N
= 1.
\]
\noindent Here, the first equality is due to the positivity of \( \varphi \), while the second one exploits both \eqref{definition_varphi_lattice} and the fact that \( \mu \equiv 2N \) on \( \mathbb{Z}^N \). The third equality, instead, makes use of the identity established in \cref{lemma_identity_theta_function}. Finally, the last equality is a direct implication of the definition of the constant \( C \) given in \eqref{definition_varphi_lattice}. In conclusion, \( \varphi \) also satisfies condition b) of property (\(\mathcal{B}_G\)).

\medskip
\noindent Now, \cref{lemma_double_sum_lattice} ensures that the function \( \psi \) defined in \eqref{definition_psi_lattice} verifies the following inequality:
\[
\sum_{x,y \in \mathbb{Z}^N} \omega_0(x,y) \hspace{0.1em} | \nabla_{xy} \psi | < +\infty.
\]
\noindent Thanks to the relation \eqref{relation_varphi_psi_lattice}, together with the definition of the difference operator \( \nabla_{xy} \) and the fact that \( C \in (0,+\infty) \), we deduce:
\[
\sum_{x,y \in \mathbb{Z}^N} \omega_0(x,y) \hspace{0.1em} | \nabla_{xy} \varphi | = C \sum_{x,y \in \mathbb{Z}^N} \omega_0(x,y) \hspace{0.1em} | \nabla_{xy} \psi | < +\infty.
\]
\noindent In particular, after fixing two arbitrary values \( R > 0 \) and \( \delta \in (0,1) \), the sum at the left-hand side of the previous estimate is independent of both \( R \) and \( \delta \), and the following holds:
\[
\sum_{\substack{x,y \in \mathbb{Z}^N \\ \hspace{1.0em} x \in \overline{B}_R \setminus B_{(1-\delta)R - 2s}}} \omega(x,y) \hspace{0.2em} |\nabla_{xy} \varphi| \hspace{0.1em}
\leq \hspace{0.1em} \sum_{x,y \in \mathbb{Z}^N} \omega(x,y) \hspace{0.1em} | \nabla_{xy} \varphi | < +\infty,
\]
\noindent which implies that also condition c) of property (\(\mathcal{B}_G\)) is satisfied by \( \varphi \).

\medskip
\noindent Finally, if we choose \( \lambda \) such that \( \lambda \geq 1 - e^{-k} \), then \cref{lemma_Laplacian_lambda_lattice} ensures that
\[
\Delta \psi(x) + \lambda \psi(x) \geq 0 \hspace{1.8em} \text{for all } \hspace{0.1em} x \text{ in } \mathbb{Z}^N.
\]
\noindent By exploiting both the linearity of the Laplacian operator and the relation \eqref{relation_varphi_psi_lattice}, we finally obtain, for all \( x \in \mathbb{Z}^N \):
\[
\Delta \varphi(x) + \lambda \varphi(x) = C \left[ \Delta \psi(x) + \lambda \psi(x) \right] \geq 0,
\]
\noindent where the inequality is due to the fact that \( C \in (0,+\infty) \). In conclusion, if \( \lambda \geq 1 - e^{-k} \), then the function \( \varphi \) satisfies condition d) of property (\(\mathcal{B}_G\)), according to \cref{def_TF_G}. The arbitrariness of \( k > 0 \) then concludes the proof of the first part of the statement.

\medskip
\noindent We can now argue as in the last part of the proof of \cref{lemma_Laplacian_lambda_lattice}, in order to obtain the following estimate:
\[
2kN > k > 1 - e^{-k}, \hspace{2.0em} \text{for all } k > 0.
\]
\noindent Therefore, by choosing \( \lambda \) such that \( \lambda \geq 2kN > 0 \), in particular it holds \( \lambda \geq 1 - e^{-k} \), and the function \( \varphi \) satisfies the property (\(\mathcal{B}_G\)), as discussed above. This yields the thesis. \hfill \( \square \)

\medskip
\noindent \emph{Proof of \emph{\cref{theorem_blow_up_lattice_1}}}. We start by recalling that, by \cref{lemma_assumption_3_1_lattice}, the integer lattice \( (\mathbb{Z}^N,\omega_0,\mu) \) satisfies \cref{assumption_graph_pseudo_metric}, for any fixed dimension \( N \in \mathbb{N} \). Moreover, by \cref{corollary_varphi_property_B_G_lattice}, the function \( \varphi \) defined in \eqref{definition_varphi_lattice} satisfies property (\(\mathcal{B}_G\)), according to \cref{def_TF_G}, provided that \( \lambda \geq 1 - e^{-k} \), and in particular if \( \lambda \geq 2 \hspace{0.03em} k \hspace{0.03em} N \).

\medskip
\noindent Now, thanks to the identity
\[
\deg(x) = \mu(x) = 2N \hspace{2.5em} \text{for all } x \in \mathbb{Z}^N,
\]
\noindent we infer that the weighted degree function, introduced in \cref{definition_weighted_degree}, is actually a constant; indeed, it holds:
\[
\operatorname{Deg}(x) = \frac{\deg(x)}{\mu(x)} \equiv 1 \hspace{2.5em} \text{for all } x \in \mathbb{Z}^N.
\]
\noindent In particular, we have:
\[
\sup_{x \in \mathbb{Z}^N} \operatorname{Deg}(x) = 1 < +\infty,
\]
\noindent meaning that the hypothesis in \eqref{assumptions_graph_hypothesis_verified} is here satisfied, and thus by \cref{remark_proposition_blow_up_chapter_3} we deduce that the integrability condition \eqref{assumption_proposition_blow_up_chapter_3} in the statement of \cref{theorem_blow_up_chapter_3} is automatically true.

\medskip
\noindent We are then allowed to argue as in the proof of \cref{theorem_blow_up_chapter_3}. More specifically, by contradiction we assume that \( u \not \equiv 0 \) is a global solution of problem \eqref{problem_HR_G}, namely \( u(\cdot, t) \in \ell^\infty(\mathbb{Z}^N) \) for all \( t > 0 \). In addition, we suppose that \eqref{e20f} holds.

\medskip
\noindent Therefore, we can repeat the exact same passages as in the proof of \cref{theorem_blow_up_chapter_3}, obtaining the following first order differential problem:
\begin{equation}\label{CP_G}
\begin{cases}
\Phi'(t) + \lambda \Phi(t) \geq f(\Phi(t)) \hspace{1.5em} \text{for all } t > 0 \\
\Phi(0) = \sum \limits_{x \in \mathbb{Z}^N} \varphi(x) \hspace{0.05em} u_0(x) \hspace{0.05em} \mu(x),
\end{cases}
\end{equation}
\noindent having as unknown the function \( \Phi : [0,+\infty) \to [0,+\infty) \) defined as
\[
\Phi(t) := \sum_{x \in \mathbb{Z}^N} \varphi(x) \hspace{0.1em} u(x,t) \hspace{0.1em} \mu(x) \hspace{1.6em} \text{for all } t \geq 0.
\]
\noindent As already discussed in the proof of \cref{theorem_blow_up_chapter_3}, if \( \Phi(0) > s_0(\lambda) \), then \( \Phi \) exhibits a vertical asymptote at some finite time. But this would imply a contradiction, since our assumptions on \( \varphi, \mu, u, u_0 \) yield:
\[
0 \leq \Phi(t) < +\infty \hspace{1.6em} \text{for all } t \geq 0.
\]
\noindent Therefore, if it holds \( \Phi(0) > s_0(\lambda) \), we conclude that \( u \) cannot be global, meaning that it must blow up in finite time, which is the thesis. Now, it easily seen that
\begin{equation}\label{Phi_0_lattice}
\Phi(0) = \sum_{x \in \mathbb{Z}^N} \varphi(x) \, u_0(x) \, \mu(x)
= \frac{1}{2N} \left[ \theta \hspace{-0.25em} \left( \frac{k}{\pi} \right) \right]^{-N}
\sum_{x \in \mathbb{Z}^N} e^{-k \hspace{0.05em} |x|^2} \, u_0(x) \, \mu(x),
\end{equation}
where the last equality exploits both \eqref{definition_varphi_lattice} and the fact that \( \mu \equiv 2N \). This means that the condition \( \Phi(0) > s_0(\lambda) \) is equivalent to \eqref{e20f}, which completes the proof of the first part of the statement.

\medskip
\noindent Now, we assume that $f(u)=u^p \;\, (p>1)$. We then need to find an equivalent way of writing the inequality \( \Phi(0) > s_0(\lambda)=\lambda^{\frac{1}{p-1}} \). Notice that the positive constant \( \lambda \) appearing in \eqref{CP_G} must verify
\[
\Delta \varphi + \lambda \varphi \geq 0 \hspace{1.6em} \text{pointwise in } \mathbb{Z}^N,
\]
\noindent and from \cref{corollary_varphi_property_B_G_lattice} we know that this inequality holds if we choose \( \lambda \geq 2kN > 0 \), as already recalled. Then we directly set \( \lambda = 2kN \), so that
\[
\lambda^{\frac{1}{p-1}} = (2kN)^{\frac{1}{p-1}}.
\]
Therefore, by exploiting \eqref{Phi_0_lattice}, we obtain the validity of the following equivalences:
\[
\begin{aligned}
\Phi(0) > \lambda^{\frac{1}{p-1}} &\iff \frac{1}{2N} \left[ \theta \left( \frac{k}{\pi} \right) \right]^{-N}
\sum_{x \in \mathbb{Z}^N} e^{-k \hspace{0.05em} |x|^2} \, u_0(x) \, \mu(x) > (2kN)^{\frac{1}{p-1}} \\
&\iff \sum_{x \in \mathbb{Z}^N} e^{-k \hspace{0.05em} |x|^2} \, u_0(x) \, \mu(x) >
(2N)^{\frac{p}{p-1}} \left[ \theta \left( \frac{k}{\pi} \right) \right]^N k^{\frac{1}{p-1}}.
\end{aligned}
\]
\noindent We notice that the last inequality corresponds exactly to \eqref{condition_blow_up_G_lemma_1}. In conclusion, under \eqref{condition_blow_up_G_lemma_1} it holds \( \Phi(0) > \lambda^{\frac{1}{p-1}} \). As already remarked above, this yields the thesis. \hfill \( \square \)



\addcontentsline{toc}{section}{Bibliography}

\begin{thebibliography}{99}


\bibitem{BB}
C. Bandle, H. Brunner, \emph{Blowup in diffusion equations: a survey}, J. Comput. Appl. Math. \textbf{97} (1998), 3–22.


\bibitem{BLev}
C. Bandle, H. A. Levine, \emph{Fujita type phenomena for reaction‑diffusion equations with convection like terms}, Diff. Integral Eq. \textbf{7} (1994), 1169–1193.


\bibitem{BPT}
C. Bandle, M.A. Pozio,  A. Tesei, \emph{The Fujita exponent for the Cauchy problem in the hyperbolic space}, J. Differ. Equ. \textbf{251} (2011), 2143–2163.




\bibitem{BMP}
S. Biagi, G. Meglioli,  F. Punzo, \emph{A Liouville theorem for elliptic equations with a potential on infinite graphs}, Calc. Var. Part. Diff. Eq. \textbf{63}, 165 (2024).


\bibitem{BP}
S. Biagi, F. Punzo, \emph{Phragmén–Lindelöf type theorems for elliptic equations on infinite graphs}, Potential Anal. {\bf 64} 19 (2026).






\bibitem{vCr} D.E. von Criegern, \emph{Nonexistence results for a general class of parabolic problems with a potential on weighted graphs}. Nonlinear Differ. Equ. Appl. {\bf 33}, 44 (2026).


\bibitem{DL}
K. Deng, H.A. Levine, \emph{The role of critical exponents in blow-up theorems: the sequel}, J. Math. Anal. Appl. \textbf{243} (2000), 85–126.


\bibitem{Elk}
N. D. Elkies, \emph{Theta functions and weighted theta functions of Euclidean lattices, with some applications}, (2009).




\bibitem{FR}
F. Fischer, C. Rose, \emph{Optimal Poincaré–Hardy-type inequalities on manifolds and graphs}, Indagationes Mathematicae, Elsevier (2025).






\bibitem{Fujita}
H. Fujita, \emph{On the blowing-up of solutions of the Cauchy problem for \( u_t = \Delta u + u^{1+\alpha} \)}, J. Fac. Sci. Univ. Tokyo Sect. IA Math. \textbf{16} (1966), 105–113.


\bibitem{Grig1}
A. Grigor’yan, \emph{Introduction to analysis on graphs}, AMS University Lecture Series \textbf{71}, 2018.




\bibitem{GMP1}
G. Grillo, G. Meglioli, F. Punzo, \emph{Blow-up versus global existence of solutions for reaction–diffusion equations on classes of Riemannian manifolds}, Ann. Mat. Pura Appl. \textbf{202} (2023), 1255–1270.








\bibitem{GMP2}
G. Grillo, G. Meglioli, F. Punzo, \emph{Blow-up and global existence for semilinear parabolic equations on infinite graphs},
Calc. Var. Part. Diff. Eq. {\bf 65} 114 (2026).



\bibitem{GSXX} Q. Gu, Y. Sun, J. Xiao, F. Xu, {\it Global positive solution to a semilinear parabolic equation with
potential on Riemannian manifold}, Calc. Var. Partial Diff. Eq. {\bf 59} 170 (2020).

\bibitem{HK} B. Hua, M. Keller, {\it Harmonic functions of general graph Laplacians}. Calc. Var. Part. Diff. Eq. {\bf 51}
(2014), 343–362.

\bibitem{H}
K. Hayakawa, \emph{On nonexistence of global solutions of some semilinear parabolic differential equations}, Proc. Japan Acad. \textbf{49} (1973), 503–505.


\bibitem{Huang}
X. Huang, \emph{On uniqueness class for a heat equation on graphs}, J. Math. Anal. Appl. \textbf{393} (2012), 377–388.




\bibitem{KLWb}
M. Keller, D. Lenz,  R.K. Wojciechowski, \emph{Graphs and discrete Dirichlet spaces}, Springer, 2021.


\bibitem{KLW}
M. Keller, D. Lenz,  R. K. Wojciechowski, \emph{Volume growth, spectrum and stochastic completeness of infinite graphs}, Math. Z. \textbf{274} (2013), 905–932.


\bibitem{KST}
K. Kobayashi, T. Sirao,  H. Tanaka, \emph{On the growing up problem for semilinear heat equations}, J. Math. Soc. Japan \textbf{29} (1977), 407–424.


\bibitem{LSZ} D. Lenz, M. Schmidt, I. Zimmermann, {\it Blow-up of nonnegative solutions of an abstract semilinear
heat equation with convex source} Calc. Var. Part. Diff. Eq.  {\bf 62} 140 (2023).

\bibitem{Lev}
H.A. Levine, \emph{The role of critical exponents in blowup theorems}, SIAM Rev. \textbf{32} (1990), 262–288.


\bibitem{LW1}
Y. Lin, Y. Wu, \emph{Blow-up problems for nonlinear parabolic equations on locally finite graphs}, Acta Math. Sci. \textbf{38B} (3) (2018), 843–856.


\bibitem{LW2}
Y. Lin, Y. Wu, \emph{The existence and nonexistence of global solutions for a semilinear heat equation on graphs}, Calc. Var. Part. Diff. Eq. \textbf{56}, 102 (2017).




\bibitem{MMP}
P. Mastrolia, D.D. Monticelli,  F. Punzo, \emph{Nonexistence of solutions to parabolic differential inequalities with a potential on Riemannian manifolds}, Math. Ann. \textbf{367} (2017), 929–963.


\bibitem{MePu}
G. Meglioli, F. Punzo, \emph{Uniqueness in weighted \(\ell^p\) spaces for the Schrödinger equation on infinite graphs}, Proc. Amer. Math. Soc. \textbf{153} (2025), 1519-1537.


\bibitem{MitPo}
E. Mitidieri, S.I. Pohozaev, \emph{Towards a unified approach to nonexistence of solutions for a class of differential inequalities}, Milan J. Math. \textbf{72} (2004), 129–162.


\bibitem{MPS1} D. D. Monticelli, F. Punzo, J. Somaglia, {\it Nonexistence results for semilinear elliptic equations on weighted graphs}, Math. Ann. {\bf 393}  (2025), 3395–3418.

\bibitem{MPS2}
D. D. Monticelli, F. Punzo,  J. Somaglia, \emph{Nonexistence of solutions to parabolic problems with a potential on weighted graphs}, J. Differential Equations \textbf{453} (2026), 113782.


\bibitem{Mu} D. Mugnolo, “Semigroup Methods for Evolution Equations on Networks”, Springer (2016).



\bibitem{Pu1}
F. Punzo, \emph{Blow-up of solutions to semilinear parabolic equations on Riemannian manifolds with negative sectional curvature}, J. Math. Anal. Appl. \textbf{387} (2012), 815–827.


\bibitem{Pu2}
F. Punzo, \emph{Global solutions of semilinear parabolic equations with drift term on Riemannian manifolds}, Discrete Contin. Dyn. Syst. \textbf{42} (2022), 3733–3746.


\bibitem{PSa}
F. Punzo, A. Sacco, \emph{On a semilinear parabolic equation with time-dependent source term on infinite graphs},  J. Evol. Equ. {\bf 26} 13 (2026).

\bibitem{PZ2} F. Punzo, F. Zucchero, {\it On a semilinear heat equation on infinite graphs II: blow-up for arbitrary initial data and global existence}, preprint (2026).





\bibitem{W}
L. F. Wang, \emph{Heat kernel and monotonicity inequalities on the graph}, J. Geom. Anal. \textbf{33}, 38 (2023).




\bibitem{Weis}
F.B. Weissler, \emph{Existence and nonexistence of global solutions for a semilinear heat equation}, Israel J. Math. \textbf{38} (1981), 29–40.


\bibitem{Woj}
R. K. Wojciechowski, \emph{Heat kernel and essential spectrum of infinite graphs}, Indiana Univ. Math. J. \textbf{58}, no. 3 (2009), 1419–1441.


\bibitem{Wu}
Y. Wu, \emph{On nonexistence of global solutions for a semilinear heat equation on graphs}, Nonlinear Anal. \textbf{171} (2018), 73–84.


\bibitem{Z}
Q.S. Zhang, \emph{Blow-up results for nonlinear parabolic equations on manifolds}, Duke Math. J. \textbf{97} (3) (1999), 515–539.


\end{thebibliography}
\end{document}